\numberwithin{equation}{section}
\newtheorem{definition}{Definition} [section]
\newtheorem{assumption}[definition]{Assumption}
\newtheorem{theorem}[definition] {Theorem}
\newtheorem{lemma}[definition]{Lemma}
\title{A model reduction method based on nonlinear optimization for stochastic multiscale  optimal control problems}
\author{Lingling Ma, Jingyi Zhang, Qiuqi Li}
\begin{document}
\maketitle
\begin{abstract}

This paper proposes a non-intrusive, data-driven reduced-order modeling framework for stochastic optimal control problems governed by partial differential equations. The control problem is formulated with a quadratic cost functional and stochastic PDE constraints, and an $\mathcal{L}_2$-optimal reduced-order model is constructed to directly approximate the parameter-to-output mapping. The model is obtained by minimizing the $\mathcal{L}_2$ norm of the output error via gradient-based optimization, requiring only input–output data without access to the full-order system matrices or state variables.
To efficiently generate high-fidelity training data for multiscale problems, the Generalized Multiscale Finite Element Method (GMsFEM) is employed as an offline solver. The proposed framework ensures accuracy in control-relevant outputs while maintaining computational complexity independent of the original PDE dimension, making it suitable for real-time applications.
Numerical experiments on stochastic diffusion and advection–diffusion equations demonstrate the accuracy, efficiency, and robustness of the method.

\end{abstract}

 \section{Introduction}
Optimal control problems play a crucial role in various fields of science and engineering, where the goal is to determine the optimal control strategy that minimizes or maximizes a given objective function while satisfying certain constraints. When modeling physical processes with partial differential equations (PDEs), these problems often involve complex interactions between the control inputs and the underlying dynamics of the system. However, in real-world applications, uncertainties are inevitably present in the system parameters, boundary conditions, external loadings, or even the shape of the physical domain. These uncertainties can significantly impact the performance of the optimal control strategy, making it essential to incorporate stochastic information into the control problem formulation.

In recent years, stochastic optimal control problems governed by stochastic PDEs have attracted increasing attention due to their ability to account for uncertainties in a rigorous and systematic manner. By introducing random variables to parameterize the stochastic functions, these problems provide a more realistic framework for modeling and optimizing control strategies under uncertain conditions. For deterministic optimal control problems, extensive research has been conducted over the past decades, resulting in well-developed mathematical theories and efficient computational methods \cite{lions1971optimal,glowinski1996exact,rees2010optimal,manzoni2021optimal}. In contrast, the development of stochastic optimal control problems, especially those involving stochastic PDEs, has only gained substantial progress in the last few decades \cite{gunzburger2011error,hou2011finite,rosseel2012optimal,lee2013stochastic,chen2013stochastic}. Despite these advancements, there remain significant challenges in both the theoretical and computational aspects of stochastic optimal control, particularly in handling high-dimensional uncertainties and developing efficient numerical algorithms.

In this study, we focus on stochastic optimal control problems with a quadratic cost functional that are subject to constraints imposed by stochastic partial differential equations. Within the realm of PDE-constrained optimization, there exists a fundamental decision point: whether to adopt a discretize-then-optimize approach or an optimize-then-discretize approach. The literature presents divergent views on the merits of each strategy \cite{Collis2002Analysis}. After careful consideration, we have elected to pursue the optimize-then-discretize methodology in the present paper. For the simulation of stochastic optimal control problems, several efficient methods exist, including the Monte Carlo method\cite{ali2017multilevel,guth2021quasi,oz2022efficient},  Stochastic Galerkin method \cite{hou2011finite,babuska2004galerkin}, stochastic collocation method\cite{rosseel2012optimal,kouri2013trust} and so on. However, when the optimality system involves high-dimensional stochastic spaces, these techniques often face significant challenges. Specifically, they can suffer from a low convergence rate and require repeated computation of the optimality system for a large number of sample or collocation nodes, leading to considerable computational cost.

In solving stochastic optimal control problems, particularly those involving PDE-constrained optimization, a range of significant challenges arise\cite{antil2018frontiers,troltzsch2010optimal}. Firstly, the computational cost of high-dimensional integration increases exponentially with the dimensionality, leading to the "curse of dimensionality". Secondly, solving PDE-constrained optimization problems requires solving multiple partial differential equations simultaneously, including the state equation, the adjoint equation, and a set of equations ensuring the optimality of the solution, making the computational process very time-consuming and complex. Furthermore, in the context of many queries for parameterized PDEs, the need to solve for multiple parameter values significantly increases the computational load, especially when the parameter space is large or infinite. Finally, in high-dimensional stochastic spaces, the exponential growth of computational complexity makes numerical computation extremely difficult and time-consuming. These challenges necessitate the selection and development of efficient numerical methods, such as reduced-order models, adaptive mesh methods, or parallel computing techniques, to improve computational efficiency and the feasibility of practical applications.

Model reduction methods have become indispensable tools for tackling high dimensional control problems governed by PDEs. Classical approaches such as Proper Orthogonal Decomposition (POD)\cite{POD1,POD2,POD3} and Galerkin projection\cite{Galerkin1,Galerkin2} construct reduced-order model (ROM) by projecting the full-order model (FOM) onto carefully chosen subspaces. While these methods have achieved success in deterministic scenarios, they exhibit significant limitations when dealing with stochastic optimal control problems: First, such methods typically require direct manipulation of the full order model’s system matrices. Second, when handling high-dimensional stochastic parameter spaces, the combination of traditional Monte Carlo sampling and POD faces the challenge of the "curse of dimensionality." Finally, existing methods mainly focus on the approximation accuracy of state variables while neglecting the optimization of output errors directly related to control objectives.

In order to address the above issues, this paper presents a non-intrusive $\mathcal{L}_2$-optimal model order reduction approach for stochastic optimal control problems. The method innovatively employs a parameter-separable form to handle the dependence on stochastic parameters, and directly minimizes the $\mathcal{L}_2$ norm of output errors through gradient optimization techniques\cite{L2}. This method was originally motivated by the work on $\mathcal{H}_2$-optimal model order reduction\cite{H2,H} for nonparametric LTI systems. Compared with existing methods, this approach has three significant advantages: Firstly, it is completely data-driven based on output measurements, eliminating the need to access the internal system matrices. Secondly, it guarantees the approximation accuracy of control outputs, directly aligning with the optimization of control objectives. In addition, its computational complexity is independent of the original PDE dimensionality, ensuring feasibility for real-time control applications.

However, even with a data-driven reduced order framework, constructing a high-fidelity ROM still relies on the full order model to generate a sufficient number of training samples (snapshots) covering the parameter space. For complex PDE constraints with multiscale features, performing large-scale full order solutions on a fine mesh remains computationally expensive, which presents a front-end bottleneck for the practical application of data-driven methods. To overcome this bottleneck, various multiscale methods have been developed \cite{engquist2002heterogeneous,hou1997multiscale,jiang2010mixed}. The main idea of multiscale methods is to decompose the fine-scale problem into a set of localized subproblems, then use their individual solutions to systematically construct an accurate coarse-scale governing equation \cite{jiang2010mixed}. Among these, the Multiscale Finite Element Method \cite{efendiev2009multiscale,efendiev2004multiscale} and its generalization, the Generalized Multiscale Finite Element Method (GMsFEM)\cite{GMs,GMsFEM}, have emerged as effective local model reduction techniques, providing a robust framework for this multiscale computational approach. By computing local multiscale basis functions that capture fine-scale information during the offline stage, GMsFEM can rapidly and accurately approximate full order solutions on a coarse grid. This makes the generation of large-scale training datasets, as well as providing high-quality initial guesses for data-driven reduced-order optimization, highly efficient. Consequently, the computational burden is shifted from expensive online fine mesh solving to parallelizable offline basis function computations. Furthermore, active research efforts continue to develop novel model reduction techniques, such as the constraint energy minimizing generalized multiscale finite element method (CEM-GMsFEM)\cite{chung2018constraint,chung2018fast,fu2020constraint}.
In this paper, GMsFEM is employed solely as an offline solver to generate high-fidelity realizations of the optimal control for sampled random parameters. Based on these data, a non-intrusive $\mathcal{L}_2$-optimal reduced-order model with parameter-separable structure is constructed to directly approximate the parameter-to-control map, without projecting the governing equations or the optimality system.

The paper is structured as follows. Section 2 presents the definition of stochastic optimal control problem and some relevant preliminaries. In Section 3, we elaborate in detail on the construction of the $\mathcal{L}_2$-optimal reduced-order model for these problems. Section 4 introduces the generalized multiscale finite element method. The effectiveness of the proposed approach is then verified in Section 5 through numerical experiments on PDE-constrained problems with stochastic coefficients.

The flowchart of the proposed method in this paper is given as follows:
\begin{figure}[H]
\centering
\resizebox{\textwidth}{!}{
\begin{tikzpicture}[
    node distance=0.6cm and 1.0cm,
    every node/.style={font=\small},
    block/.style={
        rectangle, draw, rounded corners=2pt,
        align=center,
        minimum width=2.8cm, 
        minimum height=1.0cm,
        inner sep=2pt,
        line width=1pt
    },
    arrow/.style={->, thick, >=stealth},
    group/.style={
        rectangle, dashed, rounded corners,
        draw, inner sep=6pt, line width=1pt
    }
]

\usetikzlibrary{fit}

\definecolor{fomcolor}{RGB}{230,230,230}
\definecolor{rbcolor}{RGB}{180,220,250}
\definecolor{optcolor}{RGB}{250,190,190}

\node[block, fill=fomcolor] (fom)
{FOM\\(e.g. FEM or GMsFEM)};

\node[block, fill=rbcolor, right=0.4cm of fom] (snap)
{Snapshot collection\\$u(\mu_1), \dots, u(\mu_M)$};

\node[block, fill=rbcolor, right=0.4cm of snap] (rb)
{RB basis (POD/Greedy)\\$V \in \mathbb{R}^{N \times r}$};

\node[block, fill=rbcolor, below=0.8cm of rb] (rom0)
{ROM\\$(\hat{A}_r^0,\hat{B}_r^0,\hat{C}_r^0)$};

\node[block, fill=optcolor, below=0.8cm of rom0] (l2)
{$\mathcal{L}_2$ data-driven optimization\\$\min \| y - \hat{y} \|_{L_2}$};

\node[block, fill=optcolor, right=0.4cm of l2] (rom)
{DDROM\\$(\hat{A}_r^*,\hat{B}_r^*,\hat{C}_r^*)$};

\draw[arrow] (fom) -- (snap);
\draw[arrow] (snap) -- (rb);
\draw[arrow] (rb) -- (rom0);
\draw[arrow] (rom0) -- (l2);
\draw[arrow] (l2) -- (rom);

\node[group, draw=blue!60!black, fit=(snap)(rb)(rom0)] (groupRB) {};
\node[above=0.25cm of groupRB, text=blue!70!black, font=\small]
{Proper initialization (RB method)};

\node[group, draw=red!60!black, fit=(l2)(rom)] (groupDD) {};
\node[below=0.25cm of groupDD, text=red!70!black, font=\small]
{Data-driven correction ($\mathcal{L}_2$ optimization)};

\end{tikzpicture}
}
\caption{Framework of $\mathcal{L}_2$ data-driven reduced-order modeling initialized by RB}
\label{fig:rb_l2_framework}
\end{figure}

As shown in Figure \ref{fig:rb_l2_framework}, the proposed method consists of two stages: physics-based initialization and data-driven correction. First, the RB method is employed to obtain an initial reduced-order model with physical consistency. Subsequently, $\mathcal{L}_2$ gradient optimization is applied to further enhance the model accuracy, thereby achieving a favorable balance between computational efficiency and approximation accuracy.

\section{Preliminaries}
This paper proposes a data-driven, nonlinear optimization method for stochastic control problem constrained by partial differential equations. In this section, we first introduce the notations used throughout the paper and  present the definition of stochastic optimal control problem. We then provides the discrete approximation for the optimal control problems using the finite element method.

\subsection{Stochastic optimal control problem}
Let $(\mathcal{D},\mathscr{F})$ be a measurable space, where $\mathcal{D}$ is the set of possible outcomes $\omega \in \mathcal{D}$ and $\mathscr{F} \subset 2^{\mathcal{D}}$ is a $\sigma$-algebra of events. We denote the expected value of a random variable $X : \mathcal{D} \rightarrow \mathbb{R}$ with respect to a probability measure $\mathcal{P}:\mathscr{F} \rightarrow[0, 1]$ defined on the measurable space $(\mathcal{D}, \mathscr{F} )$ by
$$
\mathbb{E}_P\left[X\right]=\int_{\mathcal{D}}X(\omega)dP(\omega).
$$
We denote the usual Lebesgue space of $r \in[1, \infty)$ integrable real-valued functions by
$$
L^r(\mathcal{D},\mathscr{F},P):=\left\{\theta:\mathcal{D}\to\mathbb{R}:\theta\mathrm{~is~}\mathscr{F}\text{-measurable, }\mathbb{E}_P[|\theta|^r]<\infty\right\}.
$$
The Lebesgue spaces defined on $(\mathcal{D}, \mathscr{F}, \mathcal{P} )$ are Banach spaces and serve as natural spaces for real-valued random variables, i.e., $\mathscr{F}$-measurable functions. 

We assume that $\Omega$ is a convex bounded polygonal domain in $\mathbb{R}^d(d \geq 1)$  with Lipschitz continuous boundary $\partial\Omega$. Given a real Hilbert space $H^s(\Omega)$, the tensor-product vector space associated with $L^2(\mathcal{D}, \mathscr{F} ,\mathcal{P})$ and $H^s(\Omega)$ is 
$$
L^2(\mathcal{D}, \mathscr{F}, \mathcal{P}) \otimes H^s(\Omega) := \text{span}\left\{\theta v : \theta \in L^2 (\mathcal{D}, \mathscr{F}, \mathcal{P}), v \in H^s(\Omega) \right\}.
$$ 
We denote $\mathscr{H}^s(\Omega):=L^2(\mathcal{D}, \mathscr{F}, \mathcal{P})\otimes  H ^s(\Omega)  $ to shorten the notation and equip it with the following norm
$$
\|u\|_{\mathscr{H}^s(\Omega)}=\mathbb{E}_P[\|u\|_{H^s}^2]^{\frac12}.
$$

In particular, $\mathscr{H}^s_0(\Omega) = \left\{u\in \mathscr{H}^s(\Omega) : u|_{\partial\Omega} = 0\right\}$. When $s = 0$, we employ the abbreviated notion $\mathscr{L}^2(\Omega)$ to denote $\mathscr{H}^0(\Omega)$ by convention.
 
The stochastic optimal control problem is to minimize the objective functional under some constraints, which is mainly divided into distributed control problems and boundary control problems. The main research object of this paper is the former, and 
now we give the form of the stochastic distributed control problem. The objective functional is 
\begin{equation}
\label{obj_fun}
\min_{\substack{u\in\mathscr{H}^{1}(\Omega)\\f\in\mathscr{L}^{2}(\Omega)}}J(u,f):=\frac{1}{2}\|u(x,\mu(\omega))-\hat{u}(x,\mu(\omega))\|_{\mathscr{L}^{2}(\Omega)}^{2}+\beta\|f(x,\mu(\omega))\|_{\mathscr{L}^{2}(\Omega)}^{2}
\end{equation}
which is constrained by a stochastic PDE with the following variational from
\begin{equation}
\label{PDE_weak}
a(u,v;\mu(\omega))=(f,v;\mu(\omega)), \forall v\in\mathscr{H}_0^1(\Omega),
\end{equation}
subject to the Dirichlet boundary condition $u|_{\partial\Omega} = g(x)$. Here, $J(u, f):\Omega\times\mathcal{D}\rightarrow\mathbb{R}$ is the cost functional. 

In the model problem, we assume that both the control function $f(x, \mu(\omega))$ and the state function $u(x, \mu (\omega))$ are random fields represented by a random vector $\mu(\omega)$. The aim of the stochastic control problem is to select a suitable $f(x, \mu(\omega))$ so that the corresponding $u(x,\mu(\omega))$ is the best possible approximation of the expected $\hat{u}(x,\mu(\omega))$. The second term in \eqref{obj_fun} serves as a regularizer, known as Tikhonov regularization. Here, $\beta$ is referred to as a regularization parameter. This term prevents the control from becoming locally unbounded and ensures that the cost functional
functional $J(u, f)$ does not approach its minimum in an ill-posed or numerically unstable manner.

Finally,  according to \eqref{obj_fun}-\eqref{PDE_weak} and the boundary condition, we rewrite the stochastic control problem in the following form:
\begin{equation}
\label{sto_DCP}
\left\{  
    \begin{aligned}
     \min_{\substack{u\in\mathscr{H}^{1}(\Omega)\\f\in\mathscr{L}^{2}(\Omega)}}J(u,f)&=\frac{1}{2}\|u-\hat{u}\|_{\mathscr{L}^{2}(\Omega)}^{2}+\beta\|f\|_{\mathscr{L}^{2}(\Omega)}^{2},\\
     s.t.\quad a(u,v;\mu(\omega))&=(f,v;\mu(\omega))+b(v;\mu(\omega)), \forall v\in\mathscr{H}_0^1(\Omega),\; in\;\Omega
    \end{aligned}
\right.  
\end{equation}

\subsection{Formulation and structure}
\label{section_sto_form}
To numerically solve \eqref{sto_DCP}, two principal approaches are commonly employed: discretize-then-optimize and optimize-then-discretize. While these two methods may, in some cases, yield identical numerical results, they differ fundamentally in their procedural structure. This subsection focuses on a detailed exposition of the latter approach.

Firstly, we derive the stochastic optimality system of the optimal control problem by using Lagrangian approach, and then give the framework of FE approximation for stochastic optimal control problems.

We define the following stochastic Lagrangian functional as
\begin{equation}
\label{Lag_fun}
    \mathcal{L}(u,f,\lambda):=J(u,f)+a(u,\lambda;\mu)-(f,\lambda;\mu),
\end{equation}
where $\lambda\in\mathscr{H}_0^1(\Omega)$ is the Lagrangian parameter or adjoint variable. By taking the Fr\'echet derivative of Lagrangian functional \eqref{Lag_fun} for the variables $\lambda$, $u$ and $f$, and evaluating them at $\widetilde{u}$, $\widetilde{f}$ and $\widetilde{\lambda}$, we can obtain the first-order necessary optimality conditions of stochastic control problems \eqref{sto_DCP}, i.e.,
\begin{equation}
\label{opt_sys}
\left\{
    \begin{aligned}
    a(u,\widetilde{u};\mu)&=(f,\widetilde{u};\mu),\;\forall\widetilde{u}\in\mathcal{H}_0^1(\Omega),\\
    a^{\prime}(\lambda,\widetilde{\lambda};\mu)&=-(u-\hat{u},\widetilde{\lambda};\mu),\;\forall\widetilde{\lambda}\in\mathcal{H}_0^1(\Omega),\\
    2\beta(f,\widetilde{f};\mu)&=(\widetilde{f},\lambda;\mu),\;\forall\widetilde{f}\in\mathscr{L}^2(\Omega),
    \end{aligned}
\right.
\end{equation}
where $(\cdot,\cdot;\mu)$ represents the $L^2$ general inner product and $a^{'}(\lambda,\widetilde{\lambda};\mu)=a(\widetilde{\lambda},\lambda; \mu)$ is the adjoint bilinear form. These three equations are, respectively, the state equation, adjoint equation and gradient equation.

The optimality system \eqref{opt_sys} only admits local optimal solutions. To establish the global existence and uniqueness of the optimal solution, it is necessary to derive the stochastic saddle point formula for the optimal control problem \eqref{sto_DCP}.

We first introduce some new definitions to obtain a minimization problem equivalent to the original optimal control problem \eqref{sto_DCP}. Let $\underline{u}=(u,f)\in\mathscr{U}$ and $\underline{v}=(v,h)\in\mathscr{U}$ belong to the tensor space $\mathscr{U}=\mathscr{H}^1(\Omega)\times\mathscr{L}^2(\Omega)$, which is equipped with the norm $\|\underline{u}\|_{\mathscr{U}}=\|u\|_{\mathscr{H}^1(\Omega)}+\|f\|_{\mathscr{L}^2(\Omega)}$. We also define bilinear forms 
\begin{equation}
\label{new_bilinear}
    \begin{cases}
    \mathcal{A}(\underline{u},\underline{v}):=(u,v)+2\beta(f,h;\mu),\\[2ex]\mathcal{B}(\underline{u},q):=a(u,q;\mu)-(f,q;\mu),
    \end{cases}
\end{equation}
where $\mathcal{A}(\cdot,\cdot):\mathscr{U}\times\mathscr{U}\rightarrow\mathbb{R}$ and $\mathcal{B}(\cdot,\cdot):\mathscr{U}\times\mathscr{H}_0^1(\Omega)\rightarrow\mathbb{R}$. Giving $\hat{\underline{u}}=(\hat{u},0)$, $(\hat{\underline{u}},\underline{u})=(\hat{u},u)$ and $\mathscr{U}_{ad}\subset\mathscr{U}$, the equivalent form is
\begin{equation}
\label{equ_form}  
\left\{
    \begin{aligned}
    \min_{\underline{u}\in\mathscr{U}_{ad}}\mathcal{J}(\underline{u})&=\frac12\mathcal{A}(\underline{u},\underline{u})-(\underline{\hat{u}},\underline{u}),\\
    s.t.\;\mathcal{B}(\underline{u},\widetilde{u})&=(g,\widetilde{u})_{\partial\Omega},\;\forall\widetilde{u}\in\mathscr{H}_0^1(\Omega).
     \end{aligned}
\right.
\end{equation}
Moreover, combining \eqref{opt_sys} with \eqref{new_bilinear}, the equivalent saddle point problem for \eqref{equ_form} is to find $(\underline{u},\lambda)\in\mathscr{U}\times\mathscr{H}_0^1(\Omega)$ such that
\begin{equation}
\label{saddle point}
    \begin{cases}
    \mathcal{A}(\underline{u},\underline{v})+&\mathcal{B}(\underline{v},\lambda)=(\hat{\underline{u}},\underline{v}),\quad\;\forall\underline{v}\in\mathscr{U},\\
    &\mathcal{B}(\underline{u},\widetilde{u})=(g,\widetilde{u})_{\partial\Omega},\;\forall\widetilde{u}\in\mathscr{H}_0^1(\Omega).
    \end{cases}
\end{equation}
Please refer to \cite{ma2018local} for the equivalent relationship between the saddle point form of stochastic control problem and other forms, and we will not repeat it here.
\begin{theorem}
    Let $\mathscr{U}_0:=\left\{\underline{u}\in\mathscr{U}:\mathcal{B}(\underline{u},\widetilde{u})=0,\forall\widetilde{u}\in\mathcal{H}_0^1(\Omega)\right\}$ be the kernel space of bilinear form $\mathcal{B}(\cdot,\cdot)$, we can obtain the global existence of a unique solution to the minimization problem \eqref{saddle point}. 
\end{theorem}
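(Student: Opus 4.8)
The plan is to read \eqref{saddle point} as an abstract mixed variational problem for the pair of forms $\mathcal{A}:\mathscr{U}\times\mathscr{U}\to\mathbb{R}$ and $\mathcal{B}:\mathscr{U}\times\mathscr{H}_0^1(\Omega)\to\mathbb{R}$ and to apply the Babuška--Brezzi (LBB) theory. Concretely, I would verify the four hypotheses of Brezzi's theorem: (i) boundedness of $\mathcal{A}$; (ii) boundedness of $\mathcal{B}$; (iii) coercivity of $\mathcal{A}$ on the kernel, $\mathcal{A}(\underline{u},\underline{u})\ge\alpha\|\underline{u}\|_{\mathscr{U}}^2$ for all $\underline{u}\in\mathscr{U}_0$; and (iv) the inf--sup condition $\inf_{q\in\mathscr{H}_0^1(\Omega)}\sup_{\underline{v}\in\mathscr{U}}\mathcal{B}(\underline{v},q)\big/\!\left(\|\underline{v}\|_{\mathscr{U}}\|q\|_{\mathscr{H}^1}\right)\ge\gamma>0$. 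Since the right-hand sides $\underline{v}\mapsto(\hat{\underline{u}},\underline{v})$ and $\widetilde{u}\mapsto(g,\widetilde{u})_{\partial\Omega}$ define bounded linear functionals (using $\hat u\in\mathscr{L}^2(\Omega)$ and a trace/lifting argument for $g$), Brezzi's theorem then yields a unique $(\underline{u},\lambda)\in\mathscr{U}\times\mathscr{H}_0^1(\Omega)$ solving \eqref{saddle point}; and because $\mathcal{A}$ is symmetric and positive semidefinite, this $\underline{u}$ is simultaneously the unique minimizer of the equivalent constrained problem \eqref{equ_form}. Throughout I will invoke the standing assumptions that $a(\cdot,\cdot;\mu)$ is bounded and coercive on $H_0^1(\Omega)$ uniformly a.s.\ in $\omega$, so that after taking expectations the tensorized form inherits a uniform continuity constant $M$ and a uniform coercivity constant $\alpha_a>0$ on $\mathscr{H}_0^1(\Omega)$.

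Steps (i), (ii) and (iv) I expect to be routine. For (i), writing $\mathcal{A}(\underline{u},\underline{v})=(u,v)+2\beta(f,h;\mu)$ and using Cauchy--Schwarz in $\mathscr{L}^2(\Omega)$ together with $\|\cdot\|_{\mathscr{L}^2}\le\|\cdot\|_{\mathscr{H}^1}$ gives $|\mathcal{A}(\underline{u},\underline{v})|\le\max\{1,2\beta\}\,\|\underline{u}\|_{\mathscr{U}}\|\underline{v}\|_{\mathscr{U}}$. For (ii), boundedness of $a$ and Cauchy--Schwarz give $|\mathcal{B}(\underline{u},q)|\le(M+1)\|\underline{u}\|_{\mathscr{U}}\|q\|_{\mathscr{H}^1}$. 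For (iv), given $q\in\mathscr{H}_0^1(\Omega)$ I would simply test with $\underline{v}=(q,0)\in\mathscr{U}$: then $\mathcal{B}((q,0),q)=a(q,q;\mu)\ge\alpha_a\|q\|_{\mathscr{H}^1}^2$ while $\|(q,0)\|_{\mathscr{U}}=\|q\|_{\mathscr{H}^1}$, so the supremum over $\underline{v}$ is at least $\alpha_a\|q\|_{\mathscr{H}^1}$, which is (iv) with $\gamma=\alpha_a$.

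The one step needing genuine care is (iii). The obstacle is that $\mathcal{A}(\underline{u},\underline{u})=\|u\|_{\mathscr{L}^2}^2+2\beta\|f\|_{\mathscr{L}^2}^2$ controls $u$ only in the weaker $\mathscr{L}^2$-norm, whereas $\|\underline{u}\|_{\mathscr{U}}$ involves $\|u\|_{\mathscr{H}^1}$, so mere positivity of $\mathcal{A}$ is insufficient and one must exploit the kernel constraint. Working in the homogeneous setting (the Dirichlet data $g$ is removed by a standard lift, so that $u\in\mathscr{H}_0^1(\Omega)$), an element $\underline{u}=(u,f)\in\mathscr{U}_0$ satisfies $a(u,\widetilde{u};\mu)=(f,\widetilde{u};\mu)$ for all $\widetilde{u}\in\mathscr{H}_0^1(\Omega)$; testing with $\widetilde{u}=u$ and using coercivity of $a$ yields $\alpha_a\|u\|_{\mathscr{H}^1}^2\le a(u,u;\mu)=(f,u;\mu)\le\|f\|_{\mathscr{L}^2}\|u\|_{\mathscr{H}^1}$, hence the a priori estimate $\|u\|_{\mathscr{H}^1}\le\alpha_a^{-1}\|f\|_{\mathscr{L}^2}$. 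Therefore $\|\underline{u}\|_{\mathscr{U}}^2=(\|u\|_{\mathscr{H}^1}+\|f\|_{\mathscr{L}^2})^2\le(\alpha_a^{-1}+1)^2\|f\|_{\mathscr{L}^2}^2\le\frac{(\alpha_a^{-1}+1)^2}{2\beta}\,\mathcal{A}(\underline{u},\underline{u})$, giving (iii) with $\alpha=2\beta(\alpha_a^{-1}+1)^{-2}>0$; note this is precisely where the Tikhonov parameter $\beta>0$ is essential. With (i)--(iv) in hand, Brezzi's theorem closes the argument. I anticipate the main difficulty to be the stochastic/tensor-product bookkeeping: ensuring that the continuity and coercivity of $a(\cdot,\cdot;\mu)$ hold uniformly a.s.\ in $\omega$ so that they survive taking expectations, and handling the boundary lift and the functional $q\mapsto(g,q)_{\partial\Omega}$ at the level of $\mathscr{H}^1(\Omega)$ rather than pointwise in $\omega$.
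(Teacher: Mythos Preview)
Your proposal is correct and is precisely the standard route one would take: cast \eqref{saddle point} as a mixed problem and verify the Brezzi hypotheses, with the only nontrivial step being coercivity of $\mathcal{A}$ on the kernel $\mathscr{U}_0$, which you handle exactly as it should be handled---using the constraint $a(u,\widetilde u;\mu)=(f,\widetilde u;\mu)$ to trade $\|u\|_{\mathscr{H}^1}$ for $\|f\|_{\mathscr{L}^2}$ and then invoking $\beta>0$.

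As for comparison with the paper: the paper does not actually supply a proof of this theorem. It is stated without argument, immediately after a sentence referring the reader to an (unfilled) citation for the equivalence of the various formulations. So there is nothing in the paper to compare against beyond the statement itself; your Brezzi/LBB verification is the expected argument and would fill that gap. The only caveats you already flag yourself---the need for uniform-in-$\omega$ continuity and coercivity of $a(\cdot,\cdot;\mu)$ so that the estimates survive taking expectations, and the boundary lift to reduce to $u\in\mathscr{H}_0^1(\Omega)$ before testing with $\widetilde u=u$---are genuine prerequisites that the paper leaves implicit, and you are right to call them out.
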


The above is to optimize the stochastic control problem \eqref{sto_DCP} and get the saddle point formation, and then discretize it with finite element approximation.

Let $\mathcal{T}_h$ be a uniform partition of the domain $\Omega$, $N_h$ be the number of vertices, $N_e$ be the number of elements in the fine mesh $\mathcal{T}_h$ and the dimension of FE space be $\mathcal{N}$. The FE space defined on the fine mesh is $V^h(\Omega)\subset H^1(\Omega)$, and $V^h_0(\Omega)\subset V^h(\Omega)$ has the vanishing boundary values. Furthermore, we define the tensor spaces $\mathscr{V}_0^h(\Omega):=V_0^h(\Omega)\otimes L^2(\Gamma)$ and $\mathcal{M}_h(\Omega):=M_h(\Omega)\otimes L^2(\Gamma)$, where $M_h(\Omega)$ is the finite dimensional subspace of $L^2(\Omega)$. Consequently, $\mathcal{M}_h(\Omega)$ is also a finite dimensional subspace of $\mathscr{L}^2(\Omega)$. 

Giving any $\mu\in\Gamma$, by applying the Galerkin projection of $\mathscr{U}_h\times\mathscr{V}_0^h(\Omega)\subset\mathscr{U}\times\mathscr{H}_0^1(\Omega)$, where $\mathscr{U}_h:=\mathscr{V}_0^h(\Omega)\times\mathcal{M}_h(\Omega)$, we can get the discrete formulation of the saddle point problem \eqref{saddle point}: find $(\underline{u}_h,\lambda_h)\in\mathscr{U}_h\times\mathscr{V}_0^h$ such that
\begin{equation}
\left\{
\begin{aligned}
\mathcal{A}(\underline{u}_h,\underline{v}_h)+\mathcal{B}(\underline{v}_h,\lambda_h)&=(\underline{\hat{u}},\underline{v}_h),\quad\forall\underline{v}_h\in\mathscr{U}_h,\\
\mathcal{B}(\underline{u}_h,\widetilde{u}_h)&=(g,\widetilde{u}_h)_{\partial\Omega},\;\forall\widetilde{u}_h\in\mathscr{V}_0^h(\Omega).
\end{aligned}
\right.
\end{equation}

The Galerkin formulation of \eqref{opt_sys} is to find $(u_h,f_h,\lambda_h)\in\mathscr{V}_0^h(\Omega)\times\mathcal{M}_h(\Omega)\times\mathscr{V}_0^h(\Omega)$, such that
\begin{equation}
\label{Galerkin}
\left\{
\begin{aligned}
a(u_h,\widetilde{u}_h;\mu)&=(f_h,\widetilde{u}_h;\mu),\;\forall\widetilde{u}_h\in\mathscr{V}_0^h(\Omega),\\
a(\lambda_h,\widetilde{\lambda}_h;\mu)&=-(u_h-\hat{u},\widetilde{\lambda}_h;\mu),\;\forall\widetilde{\lambda}_h\in\mathscr{V}_0^h(\Omega),\\
2\beta(f_h,\widetilde{f}_h;\mu)&=(\widetilde{f}_h,\lambda_h;\mu),\;\forall\widetilde{f_h}\in\mathcal{M}_h(\Omega).
\end{aligned}
\right.
\end{equation}

Assuming that the basis functions of the spaces $\mathcal{M}_h(\Omega)$ and $\mathscr{V}^h_0(\Omega)$ are denoted by $\left\{\phi_k\right\}^{N_e}_{k=1}$ and $\left\{\psi_k\right\}^{N_e}_{k=1}$, respectively, the variables $u_h$, $f_h$, $\lambda_h$ in \eqref{Galerkin} can be represented by the linear combination of the these functions. 
\begin{equation}
\label{linear_com}
u_h=\sum_{i=1}^{N_h}u_{h,i}\psi_i,\quad f_h=\sum_{j=1}^{N_e}f_{h,j}\phi_j,\quad\lambda_h=\sum_{k=1}^{N_h}\lambda_{h,k}\psi_k.
\end{equation}

For the weak form \eqref{PDE_weak}, we assume that both the parameter-dependent bilinear form $a(\cdot,\cdot;\mu)$ and the linear form $(f,\cdot;\mu)$ are affine with respect to $\mu$, i.e.,
\begin{equation}
\label{affine_ass}
\left\{
\begin{aligned}
a(u,v;\mu)&=\sum_{q=1}^{Q_a}Q_a^q(\mu)a^q(u,v)\quad\forall u,v\in H^1(\Omega), \forall\mu\in\Gamma,\\
(f,v;\mu)&=\sum_{q=1}^{Q_f}Q_f^q(\mu)(f^q,v)\quad\forall f^q\in L^2(\Omega), v\in H^1(\Omega), \forall\mu\in\Gamma,
\end{aligned}
\right.
\end{equation}
where $Q_a^q(\mu):\Gamma\rightarrow\mathbb{R}$ and $Q_f^q(\mu):\Gamma\rightarrow\mathbb{R}$ are $\mu$-dependent function, in addition, $a_q(\cdot,\cdot):H^1(\Omega)\times H^1(\Omega)\rightarrow\mathbb{R}$ and $(f^q,\cdot):L^2(\Omega)\times H^1(\Omega)\rightarrow\mathbb{R}$ are independent of $\mu$.

With the assumption \eqref{affine_ass}, the discrete optimality system \eqref{Galerkin} can be rewritten as

\begin{equation*}
\left\{
\begin{aligned}
&\sum_{q=1}^{Q_a}\sum_{i=1}^{N_h}Q_a^q(\mu)u_{h,i}(\mu)a^q(\psi_i,\psi_{i^{\prime}})=\sum_{j=1}^{N_e}f_{h,j}(\mu)(\phi_j,\psi_{i^{\prime}}),\\
&\sum_{q^{\prime}=1}^{Q_a}\sum_{k=1}^{N_h}Q_a^{q^{\prime}}(\mu)\lambda_{h,k}(\mu)a^{q^{\prime}}(\psi_k,\psi_{k^{\prime}})+\sum_{i=1}^{N_h}u_{h,i}(\mu)(\psi_i,\psi_{k^{\prime}})=\sum_{p=1}^{Q_u}\hat{u}_p(\mu)(\overline{\hat{u}}_p,\psi_{k^{\prime}}),\\
&2\beta\sum_{j=1}^{N_e}f_{h,j}(\mu)(\phi_j,\phi_{j^{\prime}})=\sum_{k=1}^{N_h}\lambda_{h,k}(\mu)(\phi_{j^{\prime}},\psi_k).
\end{aligned}
\right.
\end{equation*}
In order to convert the above formula into matrix-vector form, we first assume that
$$\begin{aligned}
    & \left(M_{1}\right)_{j, j^{\prime}}=\left(\phi_j, \phi_{j^{\prime}}\right), 1 \leq j,\;j^{\prime} \leq N_e, \\
    & \left(M_{2}\right)_{k, j^{\prime}}=\left(\phi_{j^{\prime}}, \psi_k\right), 1 \leq j^{\prime} \leq N_e,\;1 \leq k \leq N_h, \\
    & \left(M_{3}\right)_{i, k^{\prime}}=\left(\psi_i, \psi_{k^{\prime}}\right),\;1 \leq i, k, k^{\prime} \leq N_h, \\ 
    &\left(K^q\right)_{k, k^{\prime}}=a^q\left(\psi_k, \psi_{k^{\prime}}\right),\;1 \leq i, k, k^{\prime} \leq N_h,\\ &\left(\widehat{U}_p\right)_{k^{\prime}}=\left(\overline{\hat{u}}_p, \psi_{k^{\prime}}\right), 1 \leq k,k^{\prime} \leq N_h, \\
    & K=\sum_{q=1}^{Q_a} Q_a^q(\mu) K^q,\quad\overline{\widehat{U}}=\sum_{p=1}^{Q_u} \widehat{u}_p(\mu) \widehat{U}_p .
\end{aligned}$$
Here, $\left\{K^q\right\}_{q=1}^{Q_a}$ are stiffness matrices, $\left\{M_{i}\right\}_{i=1}^{3}$ are mass matrices, and $\left\{\widehat{U}_p\right\}_{p=1}^{Q_u}$ are load vectors.
Then, we can get the algebraic formulation of \eqref{Galerkin},
\begin{equation}
\label{alge_form}
\begin{aligned}
&\underbrace{
\begin{bmatrix}
2\beta M_{1}&0&-M_{2}^T\\0&M_{3}&K^T(\mu)\\-M_{2}&K(\mu)&0\end{bmatrix}}_{\mathscr{A}_h(\mu)\in\mathbb{R}^{(2N_h+N_e)\times(2N_h+N_e)}}
\begin{bmatrix}
F(\mu)\\U(\mu)\\\Lambda(\mu)
\end{bmatrix}
=\begin{bmatrix}0\\\overline{\widehat{U}}(\mu)\\d\end{bmatrix},
\end{aligned}
\end{equation}
where $U(\mu)$, $F(\mu)$ and $\Lambda(\mu)$ denote the vectors of the coefficients in the expansion of $u_h(\mu)$, $f_h(\mu)$ and $\lambda_h(\mu)$ and the term coming from the boundary values of $u_h$ is denoted by $d$.

\section{$\mathcal{L}_2$-optimal reduced-order modeling}
In this section, we introduce a non-invasive, data-driven, gradient-based descent algorithm for solving the stochastic control problem formulated in \eqref{alge_form}. Designed for optimization problems with a parameter-separable structure, the proposed method constructs the optimal approximation using only output samples.

\subsection{The establishment of the model}

Given a distributed stochastic control problem in the form of \eqref{sto_DCP}, we can transform it into the algebraic formulation of \eqref{alge_form} by the discretize-then-optimize approach. Defining $x(\mu):=\begin{bmatrix}
F(\mu),U(\mu),\Lambda(\mu)
\end{bmatrix}^T$ in the \eqref{alge_form} as the state quantity, we consider the following problem: giving $\mu \in\Gamma$, evaluate
\begin{equation}\label{interest}
  y(\mu)=\ell(x(\mu)), 
\end{equation}
and $x(\mu)$ satisfy
\begin{equation}\label{FOM}
\begin{aligned}
\begin{bmatrix}
2\beta M_{1}&0&-M_{2}^T\\0&M_{3}&K^T(\mu)\\-M_{}&K(\mu)&0\end{bmatrix}
x(\mu)=\begin{bmatrix}0\\\overline{\widehat{U}}(\mu)\\d\end{bmatrix},
\end{aligned}
\end{equation}
where $\ell$ is a bounded linear functional. Let $x(\mu) \in \mathbb{R}^{(2N_h+N_e)\times1}$ be the state and $y(\mu) \in \mathbb{R}$ be the output. By introducing a matrix $C(\mu)\in \mathbb{R}^{1\times(2N_h+N_e)}$, 
The function defined in problem \eqref{interest} is discretized into 
\begin{equation}\label{int_dis}
    y(\mu)=C(\mu)x(\mu).
\end{equation} We emphasize that our ultimate interest is the output prediction $y(\mu)$: the state variable $x(\mu)$ serves as an intermediary.  The models in \eqref{FOM} and \eqref{int_dis} are referred to as the full-order model (FOM). In addition, we can rewrite the above model into a direct approximation/ interpolation of the input-output mapping
\begin{equation}\label{mapping}
    y:\Gamma\rightarrow \mathbb{R},\quad \mu\mapsto y(\mu).
\end{equation}

Since the full-order matrices $M_1$, $M_2$, $M_3$, $K(\mu)$, $\overline{\widehat{U}}(\mu)$ and $C(\mu)$ are must be accessed, it is very expensive to calculate $y(\mu)$ for a given $\mu$ in FOM. The key question is how to compute $y(\mu)$ without accessing the above internal representations.

 We define the spaces of the state, control and adjoint variables as $X_h^{\mathcal{N}}(\Omega)$, $Y_h^{\mathcal{N}}(\Omega)$ and $Z_h^{\mathcal{N}}(\Omega)$, respectively. Given a positive integer $N_{\text{max}}$, an associated sequence of approximation spaces: for $N = 1,\cdots,N_{\text{max}}$, $X_h^N(\Omega)$ is a $N$-dimensional subspace of $X_h^{\mathcal{N}}(\Omega)$. We further suppose that $X_h^1(\Omega)\subset X_h^2(\Omega) \subset \cdots \subset X_h^{N_{\text{max}}}(\Omega) \subset X_h^{\mathcal{N}}(\Omega)$. Similarly, for the control variable $f$ and adjoint variable $\lambda$, we similarly define $Y_h^1(\Omega)\subset Y_h^2(\Omega) \subset \cdots \subset Y_h^{N_{\text{max}}}(\Omega) \subset Y_h^{\mathcal{N}}(\Omega)$ and $Z_h^1(\Omega)\subset Z_h^2(\Omega) \subset \cdots \subset Z_h^{N_{\text{max}}}(\Omega) \subset Z_h^{\mathcal{N}}(\Omega)$.

 Applying Galerkin projection onto the low-dimensional subspace $X_h^N(\Omega)\times Y_h^N(\Omega)\times Z_h^N(\Omega)$ yields the optimality system: given $\mu\in\Gamma$, find $(u_h^N,f_h^N,\lambda_h^N)\in X_h^N(\Omega)\times Y_h^N(\Omega)\times Z_h^N(\Omega)$ such that
\begin{equation}\label{low_Gal}
\left\{
\begin{aligned}a(u_{h}^{N},\widetilde{u}_{h}^{\prime};\mu)&=(f_{h}^{N},\widetilde{u}_{h}^{\prime};\mu),\quad\forall\widetilde{u}_{h}^{\prime}\in X_{h}^{N}(\Omega),\\
a(\lambda_{h}^{N},\widetilde{\lambda}_{h}^{\prime};\mu)&=-(u_{h}^{N}-\hat{u},\widetilde{\lambda}_{h}^{\prime};\mu),\quad\forall\widetilde{\lambda}_{h}^{\prime}\in Z_{h}^{N}(\Omega),\\
2\beta(f_{h}^{N},\widetilde{f}_{h}^{\prime};\mu)&=(\widetilde{f}_{h}^{\prime},\lambda_{h}^{N};\mu),\quad\forall\widetilde{f}_{h}^{\prime}\in Y_{h}^{N}(\Omega).
\end{aligned}
\right.
\end{equation}
We then choose the set $\left\{\delta_i\right\}_{i=1}^{2N}$ as basis functions for both $X_h^N(\Omega)$ and $Z_h^N(\Omega)$, and the set $\left\{\xi_j\right\}_{j=1}^N$ as basis functions for $Y_h^N(\Omega)$. Because basis function $\delta_i$ and $\xi_j$ belong to the FEM space $V_0^h(\Omega)$ and $\mathcal{M}_h(\Omega)$, they can be expressed as 
$$
\delta_i=\sum_{k=1}^{N_h}W_{i,k}\psi_k,\; i=1,\cdots,2N,
$$
$$
\xi_j=\sum_{s=1}^{N_e}V_{j,s}\phi_s,\; j=1,\cdots,N.
$$

Similar to the derivation in Section \ref{section_sto_form}, the matrix form of \eqref{low_Gal} is given by
\begin{equation}
\left\{
\begin{aligned}&\sum_{q=1}^{Q_{a}}Q_{a}^{q}(\mu)(W^{T}K^{q}W)U_g(\mu)=W^{T}M_{2}^{T}VF_g(\mu),\\&\sum_{q^{\prime}=1}^{Q_{a}}Q_{a}^{q^{\prime}}(\mu)(W^{T}K^{q^{\prime}}W)\Lambda_g(\mu)+W^{T}M_{3}WU_g(\mu)=\sum_{p=1}^{Q_{u}}\widehat{u}_{p}(\mu)(W^{T}\overline{\widehat{U}}_p),\\&2\beta V^{T}M_{1}V^{T}F_g(\mu)=V^{T}M_{2}W\Lambda_g(\mu).
\end{aligned}\right.
\end{equation}
With the following  notations,
$$\begin{aligned}
& K_g(\mu)=\sum_{q=1}^{Q_a} Q_a^q(\mu) (W^TK^qW),\\
&\quad\overline{\widehat{U}}_g=\sum_{p=1}^{Q_u} \widehat{u}_p(\mu)(W^T\overline{\widehat{U}}_p),\\
&M_{1,g}=V^TM_1V,M_{2,g}=W^TM_2V,M_{3,g}=W^TM_1W,\\
\end{aligned}$$
we can get the algebraic formulation of \eqref{low_Gal}
\begin{equation}\label{ROM}
\begin{aligned}
\underbrace{\begin{bmatrix}
2\beta M_{1,g}&0&-M_{2,g}^T\\0&M_{3,g}&K_g^T(\mu)\\-M_{2,g}&K_g(\mu)&0\end{bmatrix}}_{\mathscr{A}_g(\mu)\in\mathbb{R}^{5N\times5N}}\begin{bmatrix}
F_g(\mu)\\U_g(\mu)\\\Lambda_g(\mu)\end{bmatrix}=\begin{bmatrix}0\\\overline{\widehat{U}_g}(\mu)\\d_g\end{bmatrix}.
\end{aligned}
\end{equation}
The dimension of this linear system is obviously lower than that of \eqref{FOM}. 

By defining $\widehat{x}(\mu):=\begin{bmatrix}
F_g(\mu),U_g(\mu),\Lambda_g(\mu)
\end{bmatrix}^T$, we can rewrite the state-interest function \eqref{int_dis} as 
\begin{equation}\label{rom_int}
    \widehat{y}(\mu)=C(\mu)\begin{bmatrix}
    V^T&0&0\\0&W^T&0\\0&0&W^T
    \end{bmatrix}\widehat{x}(\mu):= C_g(\mu)\widehat{x}(\mu),
\end{equation}
and thus obtain the input-output mapping of the system \eqref{ROM}
\begin{equation}
    \widehat{y}:\Gamma\rightarrow\mathbb{R},\quad \mu\mapsto\widehat{y}(\mu).
\end{equation}
We refer to the system described by \eqref{ROM} and \eqref{rom_int} as the reduced-order model (ROM). In contrast to the full-order model (FOM), the dimension $N$ of the ROM is modest, making the computation of $\widehat{y}(\mu)$ significantly less expensive than that of $y(\mu)$ for any parameter $\mu \in \Gamma$.

Our goal is to construct a data-driven reduced-order model (DDROM)
\begin{equation}\label{DDROM}
\begin{aligned}
\underbrace{\begin{bmatrix}
2\beta M_{1,r}&0&-M_{2,r}^T\\0&M_{3,r}&K_r^T(\mu)\\-M_{2,r}&K_r(\mu)&0\end{bmatrix}}_{\mathscr{A}_r(\mu)\in\mathbb{R}^{r\times r}}\widehat{x}(\mu)=\begin{bmatrix}0\\\overline{\widehat{U}}_r(\mu)\\d_r\end{bmatrix},
\end{aligned}
\end{equation}
and 
\begin{equation}\label{ddrom_int}
    \widehat{y}(\mu)=C_r(\mu)\widehat{x}(\mu),
\end{equation}  
whose output $\widehat{y}(\mu)$ is both significantly cheaper to evaluate than $y(\mu)$ and consistently close to it for all $\mu\in\Gamma$. Although the motivation comes from FOM \eqref{FOM}-\eqref{int_dis}, the approximation framework developed below requires only parameter-to-output mapping \eqref{mapping} but not the full-order matrices or state $x$ in the FOM. Thus, we work with a nonintrusive, data-driven formulation based on parameter-output pairs.

To construct a data-driven reduced-order model (DDROM) by measuring the distance between $y$ and $\widehat{y}$, the $\mathcal{L}_2$-optimal reduced-order modeling introduced in this section minimizes the $\mathcal{L}_2$ error.
\begin{equation}\label{L_2 error}
\|y-\widehat{y}\|_{\mathcal{L}_2}=
\left(\int_{\Gamma}\|y(\mu)-\widehat{y}(\mu)\|_{\mathcal{F}}^2\operatorname{d\mu}\right)^{1/2},
\end{equation}
where $\|\cdot\|_{\mathcal{F}}$ is the Frobenius norm. All in all, given the input-to-output mapping in \eqref{mapping}, our goal is to find a DDROM consisting of \eqref{DDROM} and \eqref{ddrom_int} that minimizes the output $\mathcal{L}_2$ error \eqref{L_2 error}.

\subsection{Parameter-separable form of the full order model}
In this section, we express the full order model (FOM) in a parameter-separable form (PSF), where the system matrices and right-hand sides are written as affine linear combinations of parameter-independent components. 

Under the affine assumptions introduced in Section \ref{section_sto_form}, the FOM system \eqref{FOM} and \eqref{int_dis} can be rewritten as follows:
\begin{equation}\label{FOM_EXT}
\begin{aligned}
\left\{
\begin{bmatrix}
2\beta M_{1}&0&-M_{2}^T\\0&M_{3}&0\\-M_{2}&0&0\end{bmatrix}+\sum_{q=1}^{Q_a}Q_a^q(\mu)\begin{bmatrix}
0&0&0\\0&0&(K^q)^T\\0&K^q&0\end{bmatrix}\right\}
x(\mu)\\
=\begin{bmatrix}0\\0\\d\end{bmatrix}+\sum_{p=1}^{Q_u}\widehat{u_p}(\mu)\begin{bmatrix}0\\\widehat{U}_p\\0\end{bmatrix},
\end{aligned}
\end{equation}
and
\begin{equation}
    \label{fom_ext_int}
    y(\mu)=\sum_{k=1}^{Q_l}Q_l^k(\mu)C_kx(\mu).
\end{equation}
This parameter-separable form is preserved in the classical Galerkin projection ROM \eqref{ROM},
\begin{equation}\label{ROM_}
\begin{aligned}
\left\{
\begin{bmatrix}
2\beta V^TM_1V&0&-V^TM_2^TW\\0&W^TM_3W&0\\-W^TM_2V&0&0\end{bmatrix}+\sum_{q=1}^{Q_a}Q_a^q(\mu)\begin{bmatrix}
0&0&0\\0&0&W^T(K^q)^TW\\0&W^TK^qW&0\end{bmatrix}\right\}
\widehat{x}(\mu)\\
=\begin{bmatrix}0\\0\\d_g\end{bmatrix}+\sum_{p=1}^{Q_u}\widehat{u_p}(\mu)\begin{bmatrix}0\\W^T\widehat{U}_p\\0\end{bmatrix},
\end{aligned}
\end{equation}
where $Q_a$, $Q_u$, $Q_l$ are small positive integers, $Q_a^q(\mu)$, $Q_l^k(\mu)$, $\widehat{u}_p(\mu): \Gamma\to\mathbb{R}$ are given functions that are easy to evaluate. In order to construct DDROM, we further simplify the ROM by defining the following matrices:  \[\widetilde{M}_1=V^TM_1V, \quad \widetilde{M}_2=W^TM_2V, \quad \widetilde{M}_3=W^TM_3W,\quad
\widetilde{K}^q=W^TK^qW, \quad \widetilde{\widehat{U}}
_p=W^T\widehat{U}_p.\] 
Then we have
\begin{equation}\label{ROM_EXT}
\begin{aligned}
\left\{
\begin{bmatrix}
2\beta \widetilde{M}_1&0&-\widetilde{M}_2^T\\0&\widetilde{M}_3&0\\-\widetilde{M}_2&0&0\end{bmatrix}+\sum_{q=1}^{Q_{a}}Q_a^q(\mu)\begin{bmatrix}
0&0&0\\0&0&(\widetilde{K}^q
)^T\\0&\widetilde{K}^q&0\end{bmatrix}\right\}
\widehat{x}(\mu)\\
=\begin{bmatrix}0\\0\\ \widetilde{d}\end{bmatrix}+\sum_{p=1}^{Q_{u}}\widehat{u}_p(\mu)\begin{bmatrix}0\\ 
\widetilde{\widehat{U}}_p\\0\end{bmatrix},
\end{aligned}
\end{equation}
and
\begin{equation}\label{rom_ext_int}
    \widehat{y}(\mu)=\sum_{k=1}^{Q_l}Q_l^k(\mu)C_k\begin{bmatrix}
        V^T&0&0\\0&W^T&0\\0&0&W^T
    \end{bmatrix}\widehat{x}(\mu):=\sum_{k=1}^{Q_l}Q_l^k(\mu)\widetilde{C}_k\widehat{x}(\mu).
\end{equation}

Inspired by this formulation, in the $\mathcal{L}_2$-optimal DDROM setting, we search for a structured DDROM with parameter-separable form as follows,
\begin{equation}\label{DDROM_EXT}
\begin{aligned}
\left\{
\begin{bmatrix}
2\beta \widetilde{M}_1&0&-\widetilde{M}_2^T\\0&\widetilde{M}_3&0\\-\widetilde{M}_2&0&0\end{bmatrix}+\sum_{q=1}^{Q_{\widetilde{a}}}\widetilde{Q}_a^q(\mu)\begin{bmatrix}
0&0&0\\0&0&(\widetilde{K}^q
)^T\\0&\widetilde{K}^q&0\end{bmatrix}\right\}
\widehat{x}(\mu)\\
=\begin{bmatrix}0\\0\\ \widetilde{d}\end{bmatrix}+\sum_{p=1}^{Q_{\widetilde{u}}}\widetilde{\widehat{u}}_p(\mu)\begin{bmatrix}0\\ 
\widetilde{\widehat{U}}_p\\0\end{bmatrix},
\end{aligned}
\end{equation}
and
\begin{equation}\label{ddrom_ext_int}
    \widehat{y}(\mu)=\sum_{k=1}^{Q_{\widetilde{l}}}\widetilde{Q}_l^k(\mu)\widetilde{C}_k\widehat{x}(\mu),
\end{equation}
where $Q_{\widetilde{a}}$, $Q_{\widetilde{u}}$, $Q_{\widetilde{l}}$ are small positive integers. $\widetilde{Q}_a^q(\mu)$, $\widetilde{Q}_l^k(\mu)$, and $\widetilde{u}_p(\mu): \Gamma \to \mathbb{R}$ are given functions, $\widetilde{M}_i~(i=1,2,3)$,  $\widetilde{K}^q~(q=1,\cdots,Q_{\widetilde{a}})$, $\widetilde{d}$, $\widetilde{\widehat{U}}_p~(p=1,\cdots,Q_{\widetilde{u}})$ and $\widetilde{C}_k~(k=1,\cdots,Q_{\widetilde{l}})$ are the DDROM matrices to be determined by minimizing the $\mathcal{L}_2$ error \eqref{L_2 error}.


Reviewing the initial idea of this method, it is interested in approximating a input-to-output mapping \eqref{mapping} by a DDROM \eqref{DDROM}. The most important thing is that the framework does not need the full-order matrices and the full-order state $x$, but we only need the output $y$.

For the following explanation, a assumption is given first, and this assumption is easy to establish in most problems.

\begin{assumption}\label{ass_scalar}
Let $(\Gamma,\Sigma,\mathbb{P})$ be a measure space, the functions  $\widetilde{Q}_a^q(\mu)$, $\widetilde{Q}_l^k(\mu)$, $\widetilde{u}_p(\mu)$ are measurable, and satisfy \begin{equation}
    \label{scalar_funs}
    \int_{\Gamma}\left\{\frac{(1+\sum_{p=1}^{Q_{\widetilde{u}}}|\widetilde{\widehat{u}}_p(\mu)|)\sum_{k=1}^{Q_{\widetilde{l}}}|\widetilde{Q}^k_l(\mu)|}{1+\sum_{q=1}^{Q_{\widetilde{a}}}|\widetilde{Q}^q_a(\mu)|}\right\}^2\mathbf{d}\mathbb{P}(\mu)<\infty.
\end{equation}
\end{assumption}

Next, we introduce some new notations for the structured DDROM \eqref{DDROM_EXT} before defining the set of allowable DDROM matrices based on Assumption \ref{ass_scalar}.
$$
\begin{aligned}
\widetilde{A}_0:=
\begin{bmatrix}
2\beta \widetilde{M}_1&0&-\widetilde{M}_2^T\\0&\widetilde{M}_3&0\\-\widetilde{M}_2&0&0\end{bmatrix},
\widetilde{A}_q:=\begin{bmatrix}
0&0&0\\0&0&(\widetilde{K}^q
)^T\\0&\widetilde{K}^q&0\end{bmatrix}, \widetilde{B}_0:=\begin{bmatrix}
    0,0,\widetilde{d},
\end{bmatrix}^T,\widetilde{B}_p:=\begin{bmatrix}
    0,\widetilde{\widehat{U}}_p,0
    \end{bmatrix}^T,
\end{aligned}
$$
for $q=1,\cdots,Q_{\widetilde{a}}$ and $p=1,\cdots,Q_{\widetilde{u}}.$
\begin{definition}\label{def_set_R}
Let $R$ be the set of all tuples of DDROM matrices $$(\widetilde{A}_0,\widetilde{A}_1,\ldots,\widetilde{A}_{Q_{\widetilde{a}}},\widetilde{B}_0,\widetilde{B}_1,\ldots,\widetilde{B}_{Q_{\widetilde{u}}},\widetilde{C}_{1},\ldots,\widetilde{C}_{Q_{\widetilde{l}}})=:(\widetilde{A}_{p},\widetilde{B}_{q},\widetilde{C}_{k}),$$ 
for $p=0,1,\cdots Q_{\widetilde{a}}$, $q=0,1,\cdots Q_{\widetilde{u}}$ and $k=1,2,\cdots Q_{\widetilde{l}}$.
Then we define the set $\mathcal{R}$ of allowable DDROM matrices as
\begin{align}\label{set R}
    \mathcal{R}=\left\{(\widetilde{A}_{p},\widetilde{B}_{q},\widetilde{C}_{k})\in R:\underset{\mu\in\Gamma}{\operatorname*{ess\;sup}}\left\|(1+\widetilde{Q}^q_a(\mu))(\widetilde{A}_0+\sum_{q=1}^{Q_{\widetilde{a}}}\widetilde{Q}_a^q(\mu)\widetilde{A}_q)^{-1}\right\|_{\mathcal{F}}<\infty \right\}.
\end{align}
\end{definition}

It is important to establish that the set $\mathcal{R}$ is open and that it forms a set of feasible DDROMs for the following analysis, the lemma gives the corresponding conclusions.
\begin{lemma}\label{lem_open}
    The set $\mathcal{R}$ \eqref{set R} in Definition \ref{def_set_R} is open. Moreover, for all $\widehat{y}$ defined by a DDROM $(\widetilde{A}_{p},\widetilde{B}_{q},\widetilde{C}_{k}) \in \mathcal{R}$, we have that $\widehat{y}$ is square-integrable.
\end{lemma}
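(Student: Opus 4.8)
The plan is to establish the two assertions separately, treating openness first and square-integrability second, since the latter will reuse the uniform bound that defines membership in $\mathcal{R}$.

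For openness, I would work directly from the defining condition in \eqref{set R}. Fix a tuple $(\widetilde{A}_{p},\widetilde{B}_{q},\widetilde{C}_{k}) \in \mathcal{R}$ and set $M := \operatorname*{ess\,sup}_{\mu\in\Gamma}\bigl\|(1+\widetilde{Q}^q_a(\mu))\,\mathscr{A}_r(\mu)^{-1}\bigr\|_{\mathcal{F}} < \infty$, where $\mathscr{A}_r(\mu) = \widetilde{A}_0 + \sum_{q=1}^{Q_{\widetilde{a}}} \widetilde{Q}_a^q(\mu)\widetilde{A}_q$. The key point is that the matrix $\mathscr{A}_r(\mu)$ depends \emph{linearly} on the entries of $\widetilde{A}_0,\ldots,\widetilde{A}_{Q_{\widetilde{a}}}$ with the fixed scalar functions $\widetilde{Q}_a^q(\mu)$ as coefficients, so a small perturbation $\delta\widetilde{A}_q$ of the DDROM matrices induces a perturbation $\delta\mathscr{A}_r(\mu) = \sum_{q=0}^{Q_{\widetilde{a}}} \widetilde{Q}_a^q(\mu)\,\delta\widetilde{A}_q$ (with $\widetilde{Q}_a^0 \equiv 1$). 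I would then invoke the standard Neumann-series perturbation bound for matrix inverses: if $\|\mathscr{A}_r(\mu)^{-1}\delta\mathscr{A}_r(\mu)\| \le \tfrac12$ uniformly in $\mu$, then $(\mathscr{A}_r(\mu)+\delta\mathscr{A}_r(\mu))^{-1}$ exists and $\|(\mathscr{A}_r+\delta\mathscr{A}_r)^{-1}\| \le 2\|\mathscr{A}_r^{-1}\|$. The subtlety here is that $\|\mathscr{A}_r(\mu)^{-1}\|$ itself need not be bounded — only $(1+\widetilde{Q}_a^q(\mu))\|\mathscr{A}_r(\mu)^{-1}\|$ is — so I would choose the perturbation bound in proportion to the scalar functions, requiring $\|\delta\widetilde{A}_q\|_{\mathcal{F}}$ small for each $q$ and then estimating $\|\mathscr{A}_r(\mu)^{-1}\delta\mathscr{A}_r(\mu)\|_{\mathcal{F}} \le \|\mathscr{A}_r(\mu)^{-1}\|_{\mathcal{F}}\sum_{q=0}^{Q_{\widetilde{a}}}|\widetilde{Q}_a^q(\mu)|\,\|\delta\widetilde{A}_q\|_{\mathcal{F}} \le M\,\tfrac{\sum_q|\widetilde{Q}_a^q(\mu)|}{1+|\widetilde{Q}_a^q(\mu)|}\sum_q\|\delta\widetilde{A}_q\|_{\mathcal{F}}$, and the fraction is bounded by $Q_{\widetilde{a}}+1$ (or simply by a constant). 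This makes the smallness requirement uniform in $\mu$, and multiplying the resulting inverse bound back by $(1+\widetilde{Q}_a^q(\mu))$ keeps the essential supremum finite, so the whole neighbourhood lies in $\mathcal{R}$; the perturbations of $\widetilde{B}_q$ and $\widetilde{C}_k$ are irrelevant to the defining condition, so any perturbation of them is allowed.

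For square-integrability, I would write $\widehat{y}(\mu)$ explicitly from \eqref{ddrom_ext_int} and \eqref{DDROM_EXT}, namely $\widehat{y}(\mu) = \sum_{k=1}^{Q_{\widetilde{l}}}\widetilde{Q}_l^k(\mu)\widetilde{C}_k\,\widehat{x}(\mu)$ with $\widehat{x}(\mu) = \mathscr{A}_r(\mu)^{-1}\bigl(\widetilde{B}_0 + \sum_{p=1}^{Q_{\widetilde{u}}}\widetilde{\widehat{u}}_p(\mu)\widetilde{B}_p\bigr)$. Taking norms and using the membership bound $\|\mathscr{A}_r(\mu)^{-1}\|_{\mathcal{F}} \le M/(1+|\widetilde{Q}_a^q(\mu)|) \le M/(1+\sum_q|\widetilde{Q}_a^q(\mu)|)$ together with the triangle inequality on the right-hand side, I get a pointwise estimate of the form $|\widehat{y}(\mu)| \le c\,\dfrac{\bigl(1+\sum_{p}|\widetilde{\widehat{u}}_p(\mu)|\bigr)\sum_{k}|\widetilde{Q}_l^k(\mu)|}{1+\sum_{q}|\widetilde{Q}_a^q(\mu)|}$ for a constant $c$ depending only on $M$ and the (fixed) Frobenius norms of the DDROM matrices $\widetilde{B}_p$ and $\widetilde{C}_k$. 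Squaring and integrating over $\Gamma$, the right-hand side is exactly (a constant times) the integrand appearing in Assumption \ref{ass_scalar}, which is finite by \eqref{scalar_funs}; hence $\|\widehat{y}\|_{\mathcal{L}_2} < \infty$.

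I expect the main obstacle to be the openness argument, specifically making the Neumann-series perturbation estimate \emph{uniform in $\mu$} despite $\|\mathscr{A}_r(\mu)^{-1}\|$ being unbounded on $\Gamma$. The trick is to carry the weight $1+\sum_q|\widetilde{Q}_a^q(\mu)|$ through every estimate — it is large exactly where the inverse is large — so that the product stays controlled; choosing the size of the admissible perturbation relative to this weight (rather than an absolute $\varepsilon$-ball, which suffices only because the number of scalar terms $Q_{\widetilde{a}}$ is finite and the ratio $|\widetilde{Q}_a^q(\mu)|/(1+\sum_q|\widetilde{Q}_a^q(\mu)|)$ is bounded by $1$) is what makes this work. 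The square-integrability half is then essentially a bookkeeping exercise that matches the pointwise bound to Assumption \ref{ass_scalar}.
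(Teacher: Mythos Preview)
The paper states Lemma~\ref{lem_open} without proof, so there is no authorial argument to compare against; your proposal is the natural route and is essentially correct. The Neumann-series perturbation argument for openness, with the weight $1+\sum_{q}|\widetilde{Q}_a^q(\mu)|$ carried through to make the estimate uniform in $\mu$, is exactly the right mechanism, and the pointwise bound on $\widehat{y}(\mu)$ matched against the integrand in Assumption~\ref{ass_scalar} is how square-integrability should be obtained.

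One small slip worth fixing: in the square-integrability step you write $M/(1+|\widetilde{Q}_a^q(\mu)|) \le M/(1+\sum_{q}|\widetilde{Q}_a^q(\mu)|)$, but enlarging the denominator \emph{decreases} the fraction, so this inequality points the wrong way. What you actually want---and what the defining condition in \eqref{set R}, read as holding for every index $q$, gives after summing over $q$---is
\[
\Bigl(1+\sum_{q=1}^{Q_{\widetilde{a}}}|\widetilde{Q}_a^q(\mu)|\Bigr)\,\|\mathscr{A}_r(\mu)^{-1}\|_{\mathcal{F}} \;\le\; (Q_{\widetilde{a}}+1)\,M,
\]
which is the bound needed to dominate $|\widehat{y}(\mu)|$ by a constant times the integrand of \eqref{scalar_funs}. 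This is a bookkeeping correction, not a gap in the strategy.
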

The proof of this lemma follows the same lines as the proof of the analogous result in \cite{L2} and is omitted for brevity.

\subsection{The gradients of the $\mathcal{L}_2$ approximation error}
Based on the previous conclusions, in this section, we will derive the gradients of the $\mathcal{L}_2$ cost function \eqref{L_2 error} with respect to DDROM matrices $\widetilde{A}_{p}$, $\widetilde{B}_{q}$,
$\widetilde{C}_{k}$. These gradient formulas form the basis of our $\mathcal{L}_2$-optimal reduced-order modeling algorithm, which is detailed in next subsection.

We need to find the DDROM belonging to $\mathcal{R}$ from Definition \ref{def_set_R}, because the Lemma \ref{lem_open} ensures that the square $\mathcal{L}_2$ error is clearly defined and differentiable on $\mathcal{R}$. Therefore, We don't continue to find the gradients for $\mathcal{L}_2$ \eqref{L_2 error}, but for the following $\mathcal{L}_2$-optimization problem
\begin{equation}
\min\limits_{(\widetilde{A}_{p},\widetilde{B}_{q},\widetilde{C}_{k}) \in \mathcal{R}} \quad\mathcal{J}(\widetilde{A}_{p},\widetilde{B}_{q},\widetilde{C}_{k})=\|y-\widehat{y}\|_{\mathcal{L}_2(\Gamma, \mathbb{P})}^2.
\end{equation}

Before presenting the theorem on the explicit expression of the gradient, we first recall some preliminary results. Firstly, we employ the reduced-order dual state $\widehat{x}_d(\mu)$, which satisfies the reduced-order dual state equation 
\begin{flalign}\label{dual-state eq}
\begin{aligned}
\left\{
\begin{bmatrix}
2\beta \widetilde{M}^T_1&0&-\widetilde{M}_2^T\\0&\widetilde{M}^T_3&0\\-\widetilde{M}_2&0&0\end{bmatrix}+\sum_{q=1}^{Q_{\widetilde{a}}}\widetilde{Q}_a^q(\mu)\begin{bmatrix}
0&0&0\\0&0&\widetilde{K}^q
\\0&(\widetilde{K}^q)^T&0\end{bmatrix}\right\}
\widehat{x}_d(\mu)
=\sum_{k=1}^{Q_{\widetilde{l}}}\widetilde{Q}_l^k(\mu)(\widetilde{C}^k)^T.
\end{aligned}
\end{flalign}

For the derivation of the gradient formula in Theorem \ref{gradient_the}, we first recall the definition of the Fréchet derivative\cite{antil2018frontiers}.
\begin{definition}
Let $H$ be a Hilbert space endowed with inner product $\langle \cdot, \cdot \rangle$, and let $U \subset H$ be an open set. 

Considering a function $f: U \to \mathbb{R}$, we say that $f$ is Fréchet differentiable at $x \in U$ if there exists a bounded linear functional $Df(x) \in H^*$ such that
\begin{equation*}
f(x + h) = f(x) + Df(x)(h) + o(\|h\|), \quad \text{as } \|h\| \to 0,
\end{equation*}
where $o(\|h\|)$ satisfies
\begin{equation*}
\lim_{\|h\| \to 0} \frac{o(\|h\|)}{\|h\|} = 0.
\end{equation*}
By the Riesz representation theorem, there exists a unique element $\nabla f(x) \in H$ such that
\begin{equation*}
Df(x)(h) = \langle \nabla f(x), h \rangle, \quad \forall h \in H.
\end{equation*}
The element $\nabla f(x)$ is called the gradient of $f$ at $x$. Consequently, the first-order expansion can be written as
\begin{equation*}
f(x + h) = f(x) + \langle \nabla f(x), h \rangle + o(\|h\|).
\end{equation*}
\end{definition}

\begin{theorem}\label{gradient_the}
Let$(\widetilde{A}_{p},\widetilde{B}_{q},\widetilde{C}_{k})$ be a tuple of DDROM matrices belonging to $\mathcal{R}$ which is defined in Definition \ref{def_set_R}. Then the gradients of $\mathcal{J}$ with respect to the DDROM matrices are
\begin{align*}
\nabla_{\widetilde{A}_{0}}\mathcal{J} &=2\int_{\Gamma}\widehat{x}_{d}(\mu)\left[y(\mu)-\widehat{y}(\mu)\right]\widehat{x}^{T}(\mu)\mathrm{d}\mathbb{P}(\mu), \\
\nabla_{\widetilde{A}_{q}}\mathcal{J} &=2\int_{\Gamma}\widetilde{Q}_a^q(\mu)\widehat{x}_{d}(\mu)\left[y(\mu)-\widehat{y}(\mu)\right]\widehat{x}^{T}(\mu)\mathrm{d}\mathbb{P}(\mu),\quad&q=1,2,\ldots,Q_{\widetilde{a}}, \\
\nabla_{\widetilde{B}_{0}}\mathcal{J} &=2\int_{\Gamma}\widehat{x}_d(\mu)\left[\widehat{y}(\mu)-y(\mu)\right]\mathrm{d}\mathbb{P}(\mu), \\
\nabla_{\widetilde{B}_{p}}\mathcal{J} &=2\int_{\Gamma}\widetilde{Q}_u^p(\mu)\widehat{x}_d(\mu)\left[\widehat{y}(\mu)-y(\mu)\right]\mathrm{d}\mathbb{P}(\mu),&p=1,2,\ldots,Q_{\widetilde{u}}, \\
\nabla_{\widetilde{C}_{k}}\mathcal{J} &=2\int_{\Gamma}\widetilde{Q}_l^k(\mu)\left[\widehat{y}(\mu)-y(\mu)\right]\widehat{x}^T(\mu)\mathrm{d}\mathbb{P}(\mu), &k=1,2,\ldots,Q_{\widetilde{l}}. 
\end{align*}
\end{theorem}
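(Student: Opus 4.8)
The plan is to treat this as an adjoint (dual‑state) sensitivity computation: compute the partial Fr\'echet derivative of $\mathcal{J}$ in each matrix argument by differentiating through the DDROM, eliminate the state sensitivity using the dual state $\widehat{x}_d$, and then read off the Frobenius gradient by cyclic manipulation of traces. Write $\mathscr{A}_r(\mu)=\widetilde{A}_0+\sum_{q=1}^{Q_{\widetilde a}}\widetilde{Q}_a^q(\mu)\widetilde{A}_q$, the right‑hand side $\widetilde{b}(\mu)=\widetilde{B}_0+\sum_{p=1}^{Q_{\widetilde u}}\widetilde{\widehat u}_p(\mu)\widetilde{B}_p$, and $\widetilde{C}(\mu)=\sum_{k=1}^{Q_{\widetilde l}}\widetilde{Q}_l^k(\mu)\widetilde{C}_k$, so that $\widehat{x}(\mu)=\mathscr{A}_r(\mu)^{-1}\widetilde{b}(\mu)$, $\widehat{y}(\mu)=\widetilde{C}(\mu)\widehat{x}(\mu)$ and $\mathcal{J}=\int_\Gamma (y(\mu)-\widehat{y}(\mu))^2\,\mathrm{d}\mathbb{P}(\mu)$. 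Since Lemma \ref{lem_open} tells us $\mathcal{R}$ is open, it suffices to fix a tuple in $\mathcal{R}$, perturb one matrix slot as $\,\cdot\,\mapsto\,\cdot\,+\epsilon\Delta$, evaluate $\tfrac{\mathrm{d}}{\mathrm{d}\epsilon}\big|_{\epsilon=0}\mathcal{J}$, and exhibit it as $\langle G,\Delta\rangle_{\mathcal{F}}$; then $G$ is the claimed gradient.

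For the core computation I would differentiate the state using $\tfrac{\mathrm{d}}{\mathrm{d}\epsilon}(\mathscr{A}^{-1})=-\mathscr{A}^{-1}\dot{\mathscr{A}}\,\mathscr{A}^{-1}$. Perturbing $\widetilde{A}_0$ gives $\dot{\widehat{x}}=-\mathscr{A}_r(\mu)^{-1}\Delta\,\widehat{x}(\mu)$; perturbing $\widetilde{A}_q$ gives the same with an extra factor $\widetilde{Q}_a^q(\mu)$; perturbing $\widetilde{B}_0$ or $\widetilde{B}_p$ changes only $\widetilde{b}$, so $\dot{\widehat{x}}=\mathscr{A}_r(\mu)^{-1}\Delta$ (resp.\ times $\widetilde{\widehat u}_p(\mu)$); and perturbing $\widetilde{C}_k$ leaves $\widehat{x}$ fixed, so $\dot{\widehat{y}}=\widetilde{Q}_l^k(\mu)\Delta\,\widehat{x}(\mu)$ directly. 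In each case the chain rule gives $\tfrac{\mathrm{d}}{\mathrm{d}\epsilon}\big|_{\epsilon=0}\mathcal{J}=2\int_\Gamma(\widehat{y}(\mu)-y(\mu))\,\dot{\widehat{y}}(\mu)\,\mathrm{d}\mathbb{P}(\mu)$, with $\dot{\widehat{y}}=\widetilde{C}(\mu)\dot{\widehat{x}}$ in the $\widetilde{A}$‑ and $\widetilde{B}$‑directions. The dual state $\widehat{x}_d(\mu)$ of \eqref{dual-state eq} is exactly the solution of $\mathscr{A}_r(\mu)^{T}\widehat{x}_d(\mu)=\widetilde{C}(\mu)^{T}$, equivalently $\widehat{x}_d(\mu)^{T}=\widetilde{C}(\mu)\mathscr{A}_r(\mu)^{-1}$, so that $\widetilde{C}(\mu)\mathscr{A}_r(\mu)^{-1}v=\widehat{x}_d(\mu)^{T}v$ for every vector $v$. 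Substituting $v=-\Delta\widehat{x}(\mu)$ in the $\widetilde{A}_0$ slot turns the integrand into $-(\widehat{y}-y)\,\widehat{x}_d^{T}\Delta\,\widehat{x}=(y-\widehat{y})\operatorname{tr}\!\big(\Delta\,\widehat{x}\,\widehat{x}_d^{T}\big)=(y-\widehat{y})\,\langle\widehat{x}_d\widehat{x}^{T},\Delta\rangle_{\mathcal{F}}$, which yields $\nabla_{\widetilde{A}_0}\mathcal{J}=2\int_\Gamma\widehat{x}_d(\mu)[y(\mu)-\widehat{y}(\mu)]\widehat{x}^{T}(\mu)\,\mathrm{d}\mathbb{P}(\mu)$. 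The other five formulas come from the identical trace manipulation: the $\widetilde{A}_q$ case inserts the weight $\widetilde{Q}_a^q$; the $\widetilde{B}_0,\widetilde{B}_p$ cases use $\dot{\widehat{x}}=\mathscr{A}_r^{-1}\Delta$ (no minus sign, whence the $[\widehat{y}-y]$ ordering) and $\widetilde{C}\mathscr{A}_r^{-1}\Delta=\widehat{x}_d^{T}\Delta$, so the gradient carries $\widehat{x}_d$ alone; the $\widetilde{C}_k$ case uses $\dot{\widehat{y}}=\widetilde{Q}_l^k\Delta\widehat{x}$, so $(\widehat{y}-y)\widetilde{Q}_l^k\Delta\widehat{x}=\widetilde{Q}_l^k(\widehat{y}-y)\langle\widehat{x}^{T},\Delta\rangle_{\mathcal{F}}$ and the gradient carries $\widehat{x}^{T}$ alone. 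Careful sign bookkeeping (the minus in the $\widetilde{A}$ sensitivity versus its absence for $\widetilde{B}$ and $\widetilde{C}$) reproduces the two orderings $[y-\widehat{y}]$ and $[\widehat{y}-y]$ in the statement.

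The step I expect to be the real obstacle is the analytic justification, namely that $\mathcal{J}$ is genuinely Fr\'echet (not merely G\^ateaux) differentiable on $\mathcal{R}$ and that $\tfrac{\mathrm{d}}{\mathrm{d}\epsilon}$ may be exchanged with $\int_\Gamma$; the matrix algebra above is routine once this is granted. Here I would use the essential‑supremum bound defining $\mathcal{R}$ in Definition \ref{def_set_R} together with Assumption \ref{ass_scalar}. From membership in $\mathcal{R}$ one obtains a bound $\|\mathscr{A}_r(\mu)^{-1}\|_{\mathcal{F}}\le c\,(1+\sum_q|\widetilde{Q}_a^q(\mu)|)^{-1}$ holding uniformly on a neighbourhood of the fixed tuple (using openness), hence the pointwise estimates $\|\widehat{x}(\mu)\|\le c\,(1+\sum_p|\widetilde{\widehat u}_p(\mu)|)(1+\sum_q|\widetilde{Q}_a^q(\mu)|)^{-1}$ and $\|\widehat{x}_d(\mu)\|\le c\,(\sum_k|\widetilde{Q}_l^k(\mu)|)(1+\sum_q|\widetilde{Q}_a^q(\mu)|)^{-1}$. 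Multiplying these and absorbing each extra weight via $|\widetilde{Q}_a^q|/(1+\sum|\widetilde{Q}_a^q|)\le1$, all six integrands are dominated by a constant times the square‑integrable function appearing in \eqref{scalar_funs}, which — paired by Cauchy–Schwarz with $y-\widehat{y}\in\mathcal{L}_2$ (square‑integrability of $\widehat{y}$ is Lemma \ref{lem_open}) — makes each integral finite. Finally, expanding $(\mathscr{A}_r+\epsilon\Delta)^{-1}$ in a Neumann series on a neighbourhood where $\|\epsilon\,\mathscr{A}_r^{-1}\Delta\|<1$ shows the second‑ and higher‑order terms carry the same $\mathbb{P}$‑integrable majorant times $\|\epsilon\Delta\|^2$; the dominated convergence theorem then legitimizes the interchange and certifies that the remainder is $o(\|\Delta\|_{\mathcal{F}})$, uniformly in direction, upgrading G\^ateaux to Fr\'echet differentiability and completing the proof.
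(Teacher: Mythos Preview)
Your proposal is correct and follows essentially the same approach as the paper: both perturb one matrix slot at a time, expand $(\mathscr{A}_r+\epsilon\Delta)^{-1}$ via the Neumann series, read off the Frobenius gradient from the linear term by trace manipulation, and control the remainder using the essential-supremum bound defining $\mathcal{R}$ together with Assumption~\ref{ass_scalar}. The only cosmetic difference is that the paper first splits $\mathcal{J}=\|y\|^2-2\langle y,\widehat{y}\rangle+\|\widehat{y}\|^2$ and differentiates the cross term and the squared term separately before recombining, whereas you apply the chain rule to $\int(y-\widehat{y})^2$ directly and invoke the dual state earlier; the substance is the same.
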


\begin{proof}
To simplify the symbolic representation in the process of proof, we firstly will write \eqref{DDROM_EXT} and \eqref{ddrom_ext_int} as 
\begin{equation}\label{simple_ddrom}\begin{aligned}
    \mathbf{A}(\mu)\widehat{x}(\mu)&=\mathbf{B}(\mu),\\
    \widehat{y}(\mu)&=\mathbf{C}(\mu)\widehat{x}(\mu),
    \end{aligned}
\end{equation}
 where $$\begin{aligned}
&\mathbf{A}(\mu)=\widetilde{A}_0+\sum_{q=1}^{Q_{\widetilde{a}}}\widetilde{Q}_a^q(\mu)\widetilde{A}_q:=\sum_{q=0}^{Q_{\widetilde{a}}}\widetilde{Q}_a^q(\mu)\widetilde{A}_q,\\
&\mathbf{B}(\mu)=\widetilde{B}_0+\sum_{p=1}^{Q_{\widetilde{u}}}\widetilde{Q}_u^p(\mu)\widetilde{B}_p:=\sum_{p=0}^{Q_{\widetilde{u}}}\widetilde{Q}_u^p(\mu)\widetilde{B}_p,\\
&\mathbf{C}(\mu)=\sum_{k=1}^{Q_{\widetilde{l}}}\widetilde{Q}_l^k(\mu)\widetilde{C}_k,
\end{aligned}$$
and $\widetilde{Q}_a^0=\widetilde{Q}_u^0=1$. Substituting $\widehat{y}(\mu)=\mathbf{C}(\mu)\mathbf{A}^{-1}(\mu)\mathbf{B}(\mu)$ from \eqref{simple_ddrom} into the objective function, and using the symmetry of the inner product  $\langle y, \widehat y\rangle _{\mathcal{L}_{2}(\Gamma,\mathbb{P})} = \langle \widehat y, y\rangle_{\mathcal{L}_{2}(\Gamma,\mathbb{P})}$, we obtain
 \begin{equation}\label{rewrite obj_func}
 \begin{aligned}
 \mathcal{J}=&\|y-\widehat{y}\|_{\mathcal{L}_{2}(\Gamma,\mathbb{P})}^{2}\\
 =& \|y\|_{\mathcal{L}_{2}(\Gamma,\mathbb{P})}^{2}-2\langle y,\widehat{y}\rangle_{\mathcal{L}_{2}(\Gamma,\mathbb{P})}+\|\widehat{y}\|_{\mathcal{L}_{2}(\Gamma,\mathbb{P})}^{2} \\
 =& \|y\|_{\mathcal{L}_{2}(\Gamma,\mathbb{P})}^{2}-2\int_{\Gamma}\mathrm{tr}\left(y^T(\mu)\mathbf{C}(\mu)\mathbf{A}^{-1}(\mu)\mathbf{B}(\mu)\right)\mathrm{d}\mathbb{P}(\mu)\\
+&\int_{\Gamma}\mathrm{tr}\left(\mathbf{B}^{T}(\mu)\mathbf{A}^{-T}(\mu)\mathbf{C}^{T}(\mu)\mathbf{C}(\mu)\mathbf{A}^{-1}(\mu)\mathbf{B}(\mu)\right)\mathrm{d}\mathbb{P}(\mu).
 \end{aligned}
 \end{equation}
 The first part of the \eqref{rewrite obj_func} does not affect the solution of gradient, so it will not be considered in the future. Record the remaining two items $\mathcal{J}_{2}$ and $\mathcal{J}_{3}$ as
$$
\mathcal{J}_{2}=-2\int_{\Gamma}\mathrm{tr}\left(y^T(\mu)\mathbf{C}(\mu)\mathbf{A}^{-1}(\mu)\mathbf{B}(\mu)\right)\mathrm{d}\mathbb{P}(\mu),
$$
$$
\mathcal{J}_{3}=\int_{\Gamma}\mathrm{tr}\left(\mathbf{B}^{T}(\mu)\mathbf{A}^{-T}(\mu)\mathbf{C}^{T}(\mu)\mathbf{C}(\mu)\mathbf{A}^{-1}(\mu)\mathbf{B}(\mu)\right)\mathrm{d}\mathbb{P}(\mu),
$$
respectively. Next, we compute the gradients of the above two terms about $\widetilde{A}_q$, $\widetilde{B}_p$ and $\widetilde{C}_k$ respectively. Taking $\nabla_{\widetilde{A}_q} \mathcal{J}_2$ as an example, we describe the procedure in detail. To do so, we consider a small perturbation $\Delta \widetilde{A}_q$ and evaluate $\mathcal{J}_2(\widetilde{A}_q + \Delta \widetilde{A}_q)$. In addition, we need to use the property in \eqref{set R}, and apply the Neumann series formula.

 $$\begin{aligned}
\mathcal{J}_{2}\left(\widetilde{A}_{q}+\Delta\widetilde{A}_{q}\right)
&=-2\int_{\Gamma}\mathrm{tr}\left(y^T(\mu)\mathbf{C}(\mu)(\mathbf{A}(\mu)+\widetilde{Q}_a^q(\mu)\Delta\widetilde{A}_{q})^{-1}\mathbf{B}(\mu)\right)\mathrm{d}\mathbb{P}(\mu)\\
&=-2\int_{\Gamma}\mathrm{tr}\left(y^T(\mu)\mathbf{C}(\mu)\left(I+\widetilde{Q}_a^q(\mu)\mathbf{A}^{-1}(\mu)\Delta\widetilde{A}_{q}\right)^{-1}\mathbf{A}^{-1}(\mu)\mathbf{B}(\mu)\right)\mathrm{d}\mathbb{P}(\mu)\\
&=-2\int_{\Gamma}\mathrm{tr}\left(y^T(\mu)\mathbf{C}(\mu)\mathbf{A}^{-1}(\mu)\mathbf{B}(\mu)\right)\mathrm{d}\mathbb{P}(\mu)\\
&+2\int_{\Gamma}\mathrm{tr}\left(\widetilde{Q}_a^q(\mu)y^T(\mu)\mathbf{C}(\mu)\mathbf{A}^{-1}(\mu)\Delta\widetilde{A}_q\mathbf{A}^{-1}(\mu)\mathbf{B}(\mu)\right)\mathrm{d}\mathbb{P}(\mu)\\
&-2\int_{\Gamma}\mathrm{tr}\left(y^T(\mu)\mathbf{C}(\mu)\sum_{m=2}^\infty\left(-\widetilde{Q}_a^q(\mu)\mathbf{A}^{-1}(\mu)\Delta\widetilde{A}_q\right)^m\mathbf{A}^{-1}(\mu)\mathbf{B}(\mu)\right)\mathrm{d}\mathbb{P}(\mu)\\
&=\mathcal{J}_2(\widetilde{A}_q)+\left\langle2\int_{\Gamma}\widetilde{Q}_a^q(\mu)\mathbf{A}^{-T}(\mu)\mathbf{C}^T(\mu)y(\mu)\mathbf{B}^T(\mu)\mathbf{A}^{-T}(\mu)\mathrm{d}\mathbb{P}(\mu),\Delta\widetilde{A}_q\right\rangle_{\mathcal{F}}\\
&-2\sum_{m=2}^{\infty}\int_{\Gamma}\mathrm{tr}\left(y^T(\mu)\mathbf{C}(\mu)\left(-\widetilde{Q}_a^q(\mu)\mathbf{A}^{-1}(\mu)\Delta\widetilde{A}_q\right)^m\mathbf{A}^{-1}(\mu)\mathbf{B}(\mu)\right)\mathrm{d}\mathbb{P}(\mu).
\end{aligned}$$
To derive $\nabla_{\widetilde{A}_q}\mathcal{J}_2$ from the last equation, it is necessary to show that the second term is bounded and that the third term is a low-order infinitesimal of $\Delta\widetilde{A}_q$. In demonstrating that the second term is bounded, we will rely on the property stated in \eqref{set R}.
$$\begin{aligned}
&\left\|\int_{\Gamma}\widetilde{Q}_a^q(\mu)\mathbf{A}^{-T}(\mu)\mathbf{C}^T(\mu)y(\mu)\mathbf{B}^T(\mu)\mathbf{A}^{-T}(\mu)\mathrm{d}\mathbb{P}(\mu)\right\|_{\mathcal{F}} \\
&\leqslant\int_{\Gamma}|\widetilde{Q}_a^q(\mu)|\left\|\mathbf{A}^{-1}(\mu)\right\|_{\mathcal{F}}\left\|\mathbf{C}(\mu)\right\|_{\mathcal{F}}\left\|\mathbf{A}^{-1}(\mu)\right\|_{\mathcal{F}}\left\|\mathbf{B}(\mu)\right\|_{\mathcal{F}}\left\|y(\mu)\right\|_{\mathcal{F}}\mathrm{d}\mathbb{P}(\mu) \\
&\leqslant\|\widetilde{Q}^q_a(\mu)(\widetilde{A}_0+\sum_{q=1}^{Q_{\widetilde{a}}}\widetilde{Q}_a^q(\mu)\widetilde{A}_q)^{-1}\|_{\mathcal{L}_{\infty}(\Gamma,\mathbb{P})}\int_{\Gamma}\left\|\mathbf{C}(\mu)\right\|_{\mathcal{F}}\left\|\mathbf{A}^{-1}(\mu)\right\|_{\mathcal{F}}\left\|\mathbf{B}(\mu)\right\|_{\mathcal{F}}\left\|y(\mu)\right\|_{\mathcal{F}}\mathrm{d}\mathbb{P}(\mu)\\
&<\infty.
\end{aligned}$$
Next, we show that the last term is of low-order infinitesimal.
$$\begin{aligned}
&\left|\sum_{m=2}^{\infty}\int_{\Gamma}\mathrm{tr}\left(y^T(\mu)\mathbf{C}(\mu)\left(-\widetilde{Q}_a^q(\mu)\mathbf{A}^{-1}(\mu)\Delta\widetilde{A}_q\right)^m\mathbf{A}^{-1}(\mu)\mathbf{B}(\mu)\right)\mathrm{d}\mathbb{P}(\mu)\right| \\
&\leqslant\|y(\mu)\|_{\mathcal{L}_{2}(\Gamma,\mathbb{P})}  \sum_{m=2}^{\infty}\left\|\mathbf{C}(\mu)\left(-\widetilde{Q}_a^q(\mu)\mathbf{A}^{-1}(\mu)\Delta\widetilde{A}_q\right)^{m}\mathbf{A}^{-1}(\mu)\mathbf{B}(\mu)\right\|_{\mathcal{L}_{2}(\Gamma,\mathbb{P})} \\
&\leqslant\|y(\mu)\|_{\mathcal{L}_{2}(\Gamma,\mathbb{P})}\sum_{m=2}^{\infty}\left\|\widetilde{Q}_a^q(\mu)\mathbf{A}^{-1}(\mu)\right\|_{\mathcal{L}_{\infty}}^{m}\left\|\|\mathbf{C}(\mu)\|_{\mathcal{F}}\|\mathbf{A}^{-1}(\mu)\|_{\mathcal{F}}\|\mathbf{B}(\mu)\|_{\mathcal{F}}\right\|_{\mathcal{L}_{2}(\Gamma,\mathbb{P})}\|\Delta\widetilde{A}_{q}\|_{\mathcal{F}}^{m}\\
&=\|y(\mu)\|_{\mathcal{L}_2(\Gamma, \mathbb{P})}\| \|\mathbf{C}(\mu)\left\|_{\mathcal{F}}\right\| \mathbf{A}^{-1}(\mu)\left\|_{\mathcal{F}}\right\|\mathbf{B}(\mu)\left\|_{\mathcal{F}}\right\|_{\mathcal{L}_2(\Gamma,\mathbb{P})} \frac{\left\|\widetilde{Q}_a^q(\mu)\mathbf{A}^{-1}(\mu)\right\|_{\mathcal{L}_{\infty}}^2\left\|\Delta \widetilde{A}_q\right\|_{\mathcal{F}}^2}{1-\left\|\widetilde{Q}_a^q(\mu)\mathbf{A}^{-1}(\mu)\right\|_{\mathcal{L}_{\infty}}\left\|\Delta\widetilde{A}_q\right\|_{\mathcal{F}}} \\
& =o\left(\left\|\Delta \widetilde{A}_q\right\|_{\mathcal{F}}\right).
\end{aligned}$$
According to the above proof, we can get the expression of $\nabla_{\widetilde{A}_q} \mathcal{J}_2$ as
$$
\nabla_{\widetilde{A}_q} \mathcal{J}_2=2\int_{\Gamma}\widetilde{Q}_a^q(\mu)\mathbf{A}^{-T}(\mu)\mathbf{C}^T(\mu)y(\mu)\mathbf{B}^T(\mu)\mathbf{A}^{-T}(\mu)\mathrm{d}\mathbb{P}(\mu),
$$
especially,
$$
\nabla_{\widetilde{A}_0} \mathcal{J}_2=2\int_{\Gamma}\mathbf{A}^{-T}(\mu)\mathbf{C}^T(\mu)y(\mu)\mathbf{B}^T(\mu)\mathbf{A}^{-T}(\mu)\mathrm{d}\mathbb{P}(\mu).
$$

If we want to get the expression of $\nabla_{\widetilde{B}_p} \mathcal{J}_2$, we need to evaluate $\mathcal{J}_2\left(\widetilde{B}_p+\Delta \widetilde{B}_p\right),$
$$\begin{aligned}
\mathcal{J}_2\left(\widetilde{B}_p+\Delta \widetilde{B}_p\right) 
& =-2 \int_{\Gamma} \operatorname{tr}\left(y^T(\mu) \mathbf{C}(\mu)\mathbf{A}^{-1}(\mu)\left(\mathbf{B}(\mu)+\widetilde{Q}_u^p(\mu) \Delta \widetilde{B}_p\right)\right) \mathrm{d}\mathbb{P}(\mu) \\
&=\mathcal{J}_2(\widetilde{B}_p)-2 \int_{\Gamma} \operatorname{tr}\left(\widetilde{Q}_u^p(\mu) y^T(\mu)\mathbf{C}(\mu) \mathbf{A}^{-1} (\mu)\Delta \widetilde{B}_p\right) \mathrm{d} \mathbb{P}(\mu) \\ &=\mathcal{J}_2(\widetilde{B}_p)-2\left\langle\int_{\Gamma} \widetilde{Q}_p(\mu) \mathbf{A}^{-T}(\mu) \mathbf{C}^T(\mu) y(\mu) \mathrm{d} \mathbb{P}(\mu)\Delta \widetilde{B}_p\right\rangle_{\mathcal{F}}.
\end{aligned}$$
Then we can get 
$$
\nabla_{\widetilde{B}_p} \mathcal{J}_2=-2 \int_{\Gamma} \widetilde{Q}_u^p(\mu) \mathbf{A}^{-T} (\mu)\mathbf{C}^T(\mu) y(\mu) \mathrm{~d} \mathbb{P}(\mu),
$$
especially,
$$
\nabla_{\widetilde{B}_0} \mathcal{J}_2=-2 \int_{\Gamma} \mathbf{A}^{-T} (\mu)\mathbf{C}^T(\mu) y(\mu) \mathrm{~d} \mathbb{P}(\mu),
$$
Similarly, we can also get 
$$
\nabla_{\widetilde{C}_k} \mathcal{J}_2=-2 \int_{\Gamma} \widetilde{Q}_l^k(\mu)y(\mu) \mathbf{B}^T(\mu)\mathbf{A}^{-T}(\mu) \mathrm{~d} \mathbb{P}(\mu).
$$

The preceding derivation gives the gradient of the second term $\mathcal{J}_2$. Following analogous steps, the gradient of $\mathcal{J}_3$—and subsequently the full gradient of \eqref{rewrite obj_func}—can be obtained without detailed repetition. The resulting gradient is presented below.
$$\begin{aligned}
& \nabla_{\widetilde{A}_a^q} \mathcal{J}=2 \int_{\Gamma} \widetilde{Q}_q(\mu)\mathbf{A}^{-T}(\mu)\mathbf{C}^T(\mu)\left(y(\mu)-\mathbf{C}(\mu) \mathbf{A}^{-1}(\mu)\mathbf{B}(\mu)\right) \mathbf{B}^T(\mu)\mathbf{A}^{-T}(\mu) \mathrm{~d} \mathbb{P}(\mu), \\
& \nabla_{\widetilde{B}_p} \mathcal{J}=2 \int_{\Gamma} \widetilde{Q}_u^p(\mu) \mathbf{A}^{-T}(\mu) \mathbf{C}^T(\mu)\left(\mathbf{C}(\mu) \mathbf{A}^{-1} (\mu)\mathbf{B}(\mu)-y(\mu)\right) \mathrm{d} \mathbb{P}(\mu), \\
& \nabla_{\widetilde{C}_k} \mathcal{J}=2 \int_{\Gamma} \widetilde{Q}^k_l(\mu)\left(\mathbf{C}(\mu) \mathbf{A}^{-1} (\mu)\mathbf{B}(\mu)-y(\mu)\right) \mathbf{B}^T(\mu)\mathbf{A}^{-T}(\mu) \mathrm{~d} \mathbb{P}(\mu).
\end{aligned}$$
Bring in $\widehat{y}(\mu)=\mathbf{C}(\mu)\mathbf{A}^{-1}(\mu)\mathbf{B}(\mu)$ and $\widehat{x}_d(\mu)=\mathbf{A}^{-1}(\mu)\mathbf{C}(\mu)$, we get the conclusion in the Theorem \ref{gradient_the}.
\end{proof}

The gradients derived in Theorem \ref{gradient_the} do not require access to the full-order matrices $M_i$, $K^q$, $\widehat{U}_p$, and $C_k$, nor the full-order state $x(\mu)$; instead, they are computed directly from the output $y(\mu)$ of the FOM. Through data-driven access to these gradient evaluations, we can design various optimization algorithms to build $\mathcal{L}_2$-optimal DDROM for different situations, which only relys on the output $y(\mu)$. We will discuss the detail of the algorithm in the next section.

\subsection{The data-driven, gradient-based algorithm}\label{sec_L2_alg}
In this section, we presents the proposed $\mathcal{L}_2$-optimal reduced-order modeling algorithm in separable parameters form, denoted as $\mathcal{L}_2$-Opt-PSF. The pseudocode is given in Algorithm \ref{alg_L2}.

The proposed algorithm formulates the reduced-order modeling problem as an optimization problem over the system matrix parameters. In contrast to traditional projection-based reduction methods, the optimization is performed directly in the parameter space of the ROM, thereby avoiding explicit dependence on the high-dimensional FOM.

\begin{algorithm}[H]
    \caption{$\mathcal{L}_2$-Opt-PSF}
    \label{alg_L2}
    \renewcommand{\algorithmicrequire}{\textbf{Input:}}
    \renewcommand{\algorithmicensure}{\textbf{Output:}}
    \begin{algorithmic}[1]
        \REQUIRE Input-to-output mapping $y$, initial guess for DDROM $(\widetilde{A}_q,\widetilde{B}_p, \widetilde{C}_k)$, maximum number of iterations $\mathbf{maxit}$, 
    tolerance $\mathbf{tol}>0.$ 
        \STATE  Set $\widehat{y}^{(0)}$ as the output of the initial DDROM.
        \FOR{$i$ in $1,2,\cdots,\mathbf{maxit}$}
            \STATE Compute a new DDROM with output $\widehat{y}^{(i)}$ using a step of a gradient-based optimization method, with the squared $\mathcal{L}_2$ error in \eqref{L_2 error} as the objective function and gradients based on Theorem \ref{gradient_the}.
            \IF{$\frac{\|\widehat{y}^{(i-1)}-\widehat{y}^{(i)}\|_{\mathcal{L}_2(\Gamma,\mathbb{P})}}{\|\widehat{y}^{(i)}\|_{\mathcal{L}_2(\Gamma,\mathbb{P})}}\leq\mathbf{tol}$}
                \STATE Exit the \textbf{for} loop.
            \ENDIF
        \ENDFOR     
        \RETURN the last computed DDROM.
    \ENSURE DDROM $(\widetilde{A}_q,\widetilde{B}_p, \widetilde{C}_k)$.  
    \end{algorithmic}
\end{algorithm}

We give some details about the algorithm. In the initialization stage, suitable initial parameters for the ROM must be specified. In particular, the full-order matrices $M_i$, $K^q$, $d$, $\widehat{U}_p$, and $C_k$ are utilized to construct the initial DDROM parameters $(\widetilde{A}_q, \widetilde{B}_p, \widetilde{C}_k)$. The choice of initialization plays a critical role in both the convergence behavior and the final approximation accuracy. To this end, the RB method is employed to generate the initial model parameters. By projecting the full-order system onto a low-dimensional subspace spanned by carefully selected basis functions, the RB approach effectively captures the dominant system dynamics and provides a high-quality initial guess for the subsequent $\mathcal{L}_2$-based optimization.

Furthermore, the proposed algorithm relies solely on input–output data and does not require access to the system matrices or state variables of the high-fidelity model, rendering it inherently non-intrusive. The stopping criterion is defined based on the relative change between successive outputs, rather than the error with respect to a high-dimensional reference solution. This design further emphasizes the data-driven nature of the proposed framework. Finally, Algorithm \ref{alg_L2} does not impose a specific optimization solver. Since the resulting problem is generally non-convex, the selection of an appropriate optimization algorithm should be guided by the specific structure and properties of the problem under consideration.

\section{Generalized multiscale finite element method}\label{sec_GMsFEM}
In the section \ref{sec_L2_alg}, we give the $\mathcal{L}_2$-optimal reduced-order modeling algorithm. We need to provide an initial value for the algorithm to get the final DDROM. Given the full order matrix in the FOM, the ROM can be obtained by POD method or RB method. When utilizing these projection-based methods to construct ROMs, it is necessary to calculate \eqref{Galerkin} for many training samples to get the snapshot space.

Generally speaking, the number of training samples is very large, and it needs high calculation cost. Besides, the constraint in the optimization problem may be a partial differential equation with multiscale structure. As a local model reduction method, the generalized multiscale finite element method (GMsFEM) can effectively deal with these problems. One of the most important characteristics of GMsFEM is that multiscale basis functions can be calculated in offline stage and can be used repeatedly for models with different source terms, boundary conditions and similar multiscale structural coefficients. In this section, we will briefly outline the local model reduction using GMsFEM.

Before giving the detailed procedures of GMsFEM, we firstly set the scene of the numerical discretization, as demonstrated in Figure \ref{mesh_GMs}. We assume that the computational domain $\Omega$ is partitioned uniformly by a coarse mesh $\mathcal{T}_H$ and a fine mesh $\mathcal{T}_h$, with mesh size $H$ and $h$, respectively. Moreover, $\mathcal{T}_h$ is obtained by refining the coarse mesh $\mathcal{T}_H$. The nodes of the the coarse mesh are denoted by $\left\{N_i\right\}_{i=1}^{\mathcal{N}_c},i=1,\cdots,\mathcal{N}_c$. The neighborhood $\omega_i$ of the node $N_i$ consists of all the coarse mesh elements for which node $N_i$ is a vertex, i.e.,
$$
\omega_i=\cup \left\{K_s\in \mathcal{T}_H|x_i\in\overline{K}_s\right\},
$$
each coarse grid is a unit $K$.
\begin{figure}[H]
    \centering   
\includegraphics[width=0.5\textwidth]{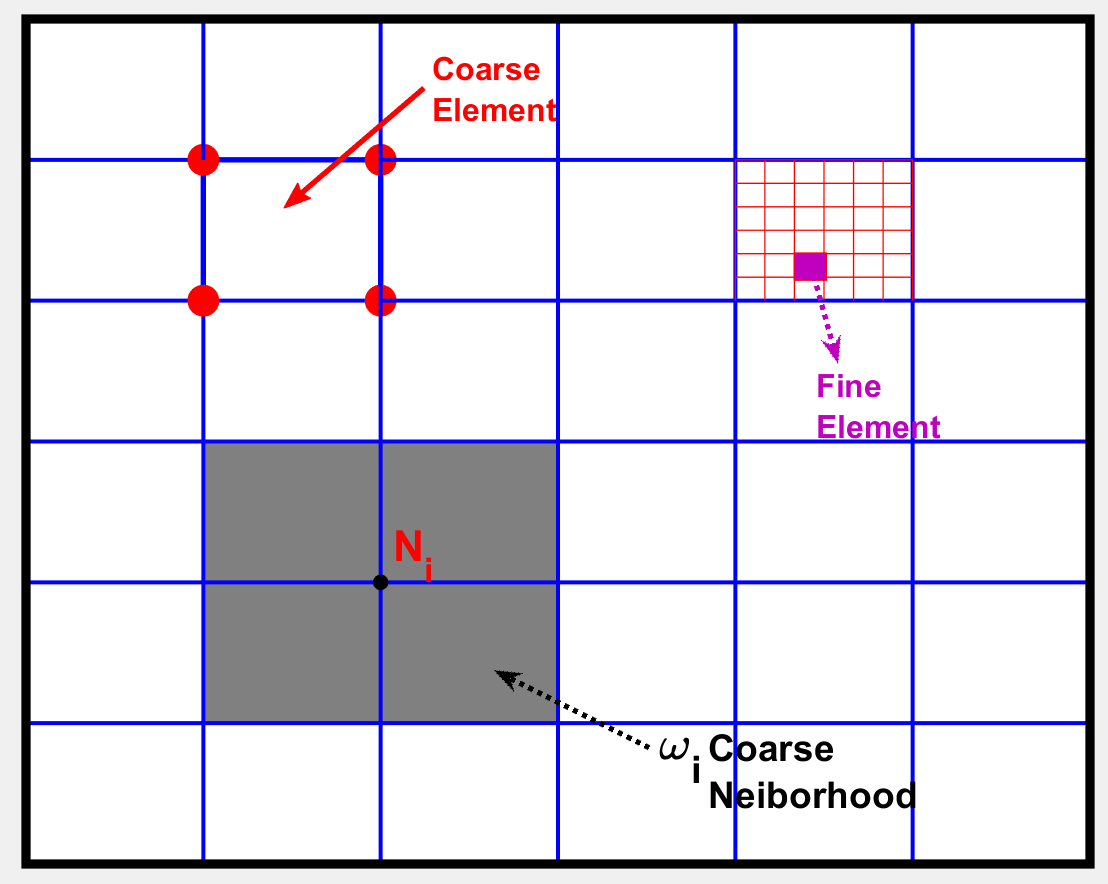}
    \caption{Illustration of the discretization configuration.}
    \label{mesh_GMs}
\end{figure}

Generalized multiscale finite element method can be splitted into two stages: offline stage and online stage. At the offline stage of GMsFEM, we firstly construct the space of snapshots, $V^{(i)}_{\text{snap}}$, and then introduce the construction of local reduced basis functions. For the snapshot space, we can construct it by various ways: (1) all fine-grid functions; (2) harmonic snapshots; (3) oversampling harmonic snapshots; and (4) forced-based snapshots. In the following, we will introduce the first way to form the snapshot spaces.

Let $\mathcal{L}$ denote the differential operator associated with the optimality system \eqref{opt_sys}, which consists of the state equation—a diffusion equation with spatially varying coefficient $\kappa(x)$—and its corresponding adjoint equation. Firstly, we solve the eigenvalue problem on each subdomain $\omega_i$,
\begin{equation}
\label{eigen_problem}
\left\{
    \begin{aligned}
   \mathcal{L}(\phi_{i,\ell})&=\lambda_{i,\ell}\widetilde{\kappa(x)}\phi_{i,\ell}\quad in \;\omega_i,\\
    \kappa(x)\cdot\nabla\phi_{i,\ell}\cdot n&=0\quad on\;\partial\omega_i.
    \end{aligned}
\right.
\end{equation}
This gives a set of snapshot functions $\left\{\phi_{i,\ell}\right\}$. Here $\widetilde{\kappa(x)}=\kappa(x)\displaystyle\sum_{i=1}^{N_c}H^2|\nabla\chi_i|^2$ and $\left\{\chi
_i\right\}$ is a set of partition of unity functions.

Let $\left\{L_m\right\}$ denotes the set of basis functions which are defined in the fine grid. After applying the FEM to discretize the snapshot equation \eqref{eigen_problem}, we obtain the matrix eigenvalue problem.
\begin{equation}\label{eigen_pro_matrix}
    K\phi_{i,\ell}=\lambda S\phi_{i,\ell},
\end{equation}
where $$(K)_{m,n}=(\kappa(x)\nabla L_m,\nabla L_n)_{L^2(\omega_i)},\quad(S)_{m,n}=(\widetilde{\kappa(x)}L_m,L_n)_{L^2(\omega_i)}.$$
Then we choose the $M_i$ lowermost eigenvalues and the corresponding eigenvectors of the eigenvalue problem \eqref{eigen_pro_matrix}, and denote the eigenvalues and eigenvectors by $\left\{\lambda_{\ell}^{\omega_i}\right\}$ and $\left\{\varphi_{\ell}^{\omega_i}\right\}$, respectively. The local snapshot space $V_{\text{snap}}(\omega_i):=\{\varphi_\ell^{\omega_i},1\leq\ell\leq M_i\}.$ Let $M=\sum_{i=1}^{N_c}M_i$ be the total number of eigenvectors for snapshots. We use the partition of unity functions to paste the snapshot functions and get the multiscale basis function space 
$$V_H(\Omega):=\operatorname{span}\{\varphi_k:1\leq k\leq M\}=\operatorname{span}\{\varphi_{i,\ell}:\varphi_{i,\ell}=\chi_i\varphi_\ell^{\omega_i},1\leq i\leq N_c,1\leq\ell\leq M_i\}.$$ 
The multiscale basis functions can be repeatedly used online. We can use a matrix $R^G$ to store these basis functions, i.e.,
\begin{equation}
    \label{matrix R}
    (R^G)^T=[\varphi_1,\varphi_1,,\dots,\varphi_M],
\end{equation}
where each $\varphi_j$ denotes a column vector. We look at the GMsFEM closer in terms of matrix-vector multiplication. Let $K_g$ and $F_g$ be the stiffness matrix and the load vector on coarse grid, and $u_H$ represents the coarse solution. The discrete formulation of state equation in \eqref{opt_sys} on coarse grid is as follows,
\begin{equation}
    a(u_H,v;\mu)=(f,v;\mu),\;\forall v\in V_H,
\end{equation}
This leads to a matrix form
\begin{equation}\label{mul_matrix}
    K_g\overline{u}_H=F_g,
\end{equation}
where $K_g=R^GK(R^G)^T,\; F_g=R^Gb$
($b$ represents the load-vector on fine grid). Then we downscale the coarse scale solution to fine scale solution by using multiscale basis functions,
\begin{equation}
    u_H=(R^G)^T\overline{u}_H.
\end{equation}

At the online stage, the multiscale basis functions can be repeatedly used, so that Compared with direct numerical simulation on fine grid, GMsFEM can significantly improve the computation efficiency. Then, we use the Algorithm \ref{alg_GMsFEM} to give an overview of GMsFEM.

\begin{algorithm}[h]
\caption{Generalized multiscale finite element method (GMsFEM)}
\label{alg_GMsFEM}
\renewcommand{\algorithmicrequire}{\textbf{Input:}}
\renewcommand{\algorithmicensure}{\textbf{Output:}}

\begin{algorithmic}[1]

\REQUIRE  The variational problem $a(u, v) = (f, v)$ defined on $\Omega$;
        coefficient $\kappa(x)$, partition of unity $\{\chi_i\}$.
\ENSURE $u_H$.

\vspace{0.3em}

\textbf{Offline Stage:}
\STATE Generate fine mesh $\mathcal{T}_h$ and coarse mesh $\mathcal{T}_H$;
\STATE Define coarse neighborhoods $\{\omega_i\}$ for each coarse node $N_i$;

\FOR{each coarse neighborhood $\omega_i$}
\STATE Solve the eigenvalue problem \eqref{eigen_pro_matrix} and select  eigenvectors $\{\varphi_\ell^{\omega_i}\}$ 
corresponding to the first $M_i$ smallest eigenvalues;
\STATE Construct local snapshot space 
        $V_{\mathrm{snap}}(\omega_i) = \mathrm{span}\{\varphi_\ell^{\omega_i}\}_{\ell=1}^{M_i}$;

\ENDFOR

\STATE Construct multiscale basis function space 
$V_H =\mathrm{span}\{\varphi_{i,\ell}:\varphi_{i,\ell}=\chi_i\varphi_\ell^{\omega_i}\}_{\ell=1}^{M_i}$ and use the matrix $R^G$ to store these basis functions.

\vspace{0.3em}

\textbf{Online Stage:}

\STATE Assemble fine-scale stiffness matrix $A$ and load vector $b$, then project to coarse scale: $K = R^G A (R^G)^T$, $F = R^G b$;
\STATE Solve the coarse system \eqref{mul_matrix} to obtain $\bar{u}_H$;
\STATE Reconstruct fine-scale solution $u_H = (R^G)^T \bar{u}_H$.

\end{algorithmic}
\end{algorithm}

Next, we apply GMsFEM into the optimality system \eqref{Galerkin} i.e., finding $(u_H,f_H,\lambda_H)\in\mathscr{V}_0^H(\Omega)\times\mathcal{M}_h(\Omega)\times\mathscr{V}_0^H(\Omega)$ such that

\begin{equation}
\label{Gms_gar}
\left\{
\begin{aligned}
a(u_H,\widetilde{u}_H;\mu)&=(f_H,\widetilde{u}_H;\mu),\;\forall\widetilde{u}_H\in\mathscr{V}_0^H(\Omega),\\
a(\lambda_H,\widetilde{\lambda}_H;\mu)&=-(u_H-\hat{u},\widetilde{\lambda}_H;\mu),\;\forall\widetilde{\lambda}_H\in\mathscr{V}_0^H(\Omega),\\
2\beta(f_H,\widetilde{f}_H;\mu)&=(\widetilde{f}_H,\lambda_H;\mu),\;\forall\widetilde{f}_H\in\mathcal{M}_h(\Omega),
\end{aligned}
\right.
\end{equation}
where $\mathscr{V}_0^H(\Omega):=V_H(\Omega)\otimes L^2(\Gamma)\subset\mathscr{V}_0^h(\Omega).$ The reduced optimality matrix corresponding to \eqref{Gms_gar} is
\begin{equation}
\label{gms_matrix_form}
\begin{aligned}
&\underbrace{
\begin{bmatrix}
2\beta M_{1}&0&-M_{2}^TR^G\\0&(R^G)^TM_{3}R^G&(R^G)^TK^T(\mu)R^G\\-(R^G)^TM_{2}&(R^G)^TK(\mu)R^G&0\end{bmatrix}}_{\mathscr{A}_H(\mu)\in\mathbb{R}^{(2M+N_e)\times(2M+N_e)}}
\begin{bmatrix}
\overline{F}_H(\mu)\\\overline{U}_H(\mu)\\\overline{\Lambda}_H(\mu)
\end{bmatrix}
=\begin{bmatrix}0\\(R^G)^T\overline{\widehat{U}}(\mu)\\(R^G)^Td\end{bmatrix},
\end{aligned}
\end{equation}
If we set 
$$
R=
\begin{bmatrix}
I & 0 & 0 \\
0 & (R^G)^T & 0 \\
0 & 0 & (R^G)^T
\end{bmatrix}
\in\mathbb{R}^{(2M+N_e)\times(2M+N_e)},
$$
we can get $\mathscr{A}_H(\mu)=R\mathscr{A}_h(\mu)R^T,$ and then 
$$
\begin{aligned}
    F_H(\mu)&=(R^G)^T\overline{F}_H(\mu),\\
    U_H(\mu)&=(R^G)^T\overline{U}_H(\mu),\\
    \Lambda_H(\mu)&=(R^G)^T\overline{\Lambda}_H(\mu).
\end{aligned}
$$
\eqref{gms_matrix_form} can  be transformed into the form of variable separation, so it can be converted into the forms similar to \eqref{FOM_EXT} and \eqref{fom_ext_int} by introducing the interest quantity, so we get the FOM using GMsFEM.

\section{Numerical examples}
In this section, we validate the effectiveness of the proposed $\mathcal{L}_2$-Opt-PSF algorithm (Algorithm \ref{alg_L2}) for solving stochastic optimal control problems through two representative numerical examples. The experiments are conducted under various parameter settings for the PDE constraints, objective functionals, and control constraints, with particular attention to the performance of the algorithm in the one-dimensional parameter case.

To construct the initial reduced-order model for the $\mathcal{L}_2$-Opt-PSF algorithm, we employ the RB method combined with a greedy algorithm in the numerical experiments. Let the parameter space be $\mathcal{P}=[0.1,10]$, and let the training set $\mathcal{P}_{\text{train}}$ consist of 100 uniformly sampled points generated by the MATLAB \texttt{linspace} function to 2ensure sufficient coverage of the parameter space. The parameters are thus uniformly distributed over the interval. The greedy algorithm adaptively selects the optimal sample points through a posteriori error estimation and successively enriches the basis function space. At the $n$-th iteration, the full-order solution corresponding to the parameter point with the largest error in the current reduced-order model is added as a new basis function, and this process repeats until the prescribed reduced dimension $r$ is reached. The reduced-order model constructed by this strategy possesses global optimality and serves as a good initial guess for the subsequent $\mathcal{L}_2$-Opt-PSF optimization algorithm.

The gradient-based optimization in Algorithm \ref{alg_L2} employs the BFGS algorithm with a strong Wolfe line search to accelerate convergence, where the maximum number of iterations $\mathbf{maxit}$ is set to $1000$ and the tolerance $\mathbf{tol}$ is set to $10^{-16}$.

In the following numerical experiments, we first construct the full-order model based on the standard finite element method to verify the basic feasibility of the proposed algorithm and to present the corresponding reference solutions. However, when the mesh is further refined or the size of the training set increases, directly using the finite element method to generate the snapshot space incurs significant computational cost. To alleviate this issue, we further introduce the generalized multiscale finite element method described in Section \ref{sec_GMsFEM}. This method effectively captures fine-scale features on a coarse grid, thereby reducing the overall computational complexity while maintaining accuracy.

\subsection{Optimal control for stochastic diffusion equation}
We start with the following
stochastic optimal control problem defined on the two-dimensional unit square $\Omega=[0,1]^2$ :
\begin{equation}
\left\{  
    \begin{aligned}
     \min_{u,f}J(u,f)&=\frac{1}{2}\|u(x,\mu)-\hat{u}(x,\mu)\|_{\mathscr{L}^{2}(\Omega)}^{2}+\beta\|f(x,\mu)\|_{\mathscr{L}^{2}(\Omega)}^{2},\\
     s.t.-div(&\kappa(x,\mu)\nabla u(x,\mu))=f(x,\mu),\; in\;\Omega,\\
     &u(x,\mu) = 0,\;on\;\partial\Omega,
    \end{aligned}
\right.  
\end{equation}
where $\beta=10^{-3}$, the diffusion coefficient $\kappa(x, \mu)$ and the desired state function $\hat{u}(x,\mu)$ are
$$
\left\{  
    \begin{aligned}
     \kappa(x,\mu)&=\kappa_1(x)+\mu(1-x_1)\\
     \hat{u}(x,\mu)&=x_1x_2(1-x_1)(1-x_2)+\mu x_1^2x_2^2(1-x_1)(1-x_2)
    \end{aligned}
\right.  
$$
Here $x=(x_1,x_2)\in\Omega$, 
 $\kappa_1(x)$ is a high-contrast function and its map is depicted in Figure \ref{fig_high_contrast}.
\begin{figure}[H]
    \centering   
\includegraphics[width=0.6\linewidth]{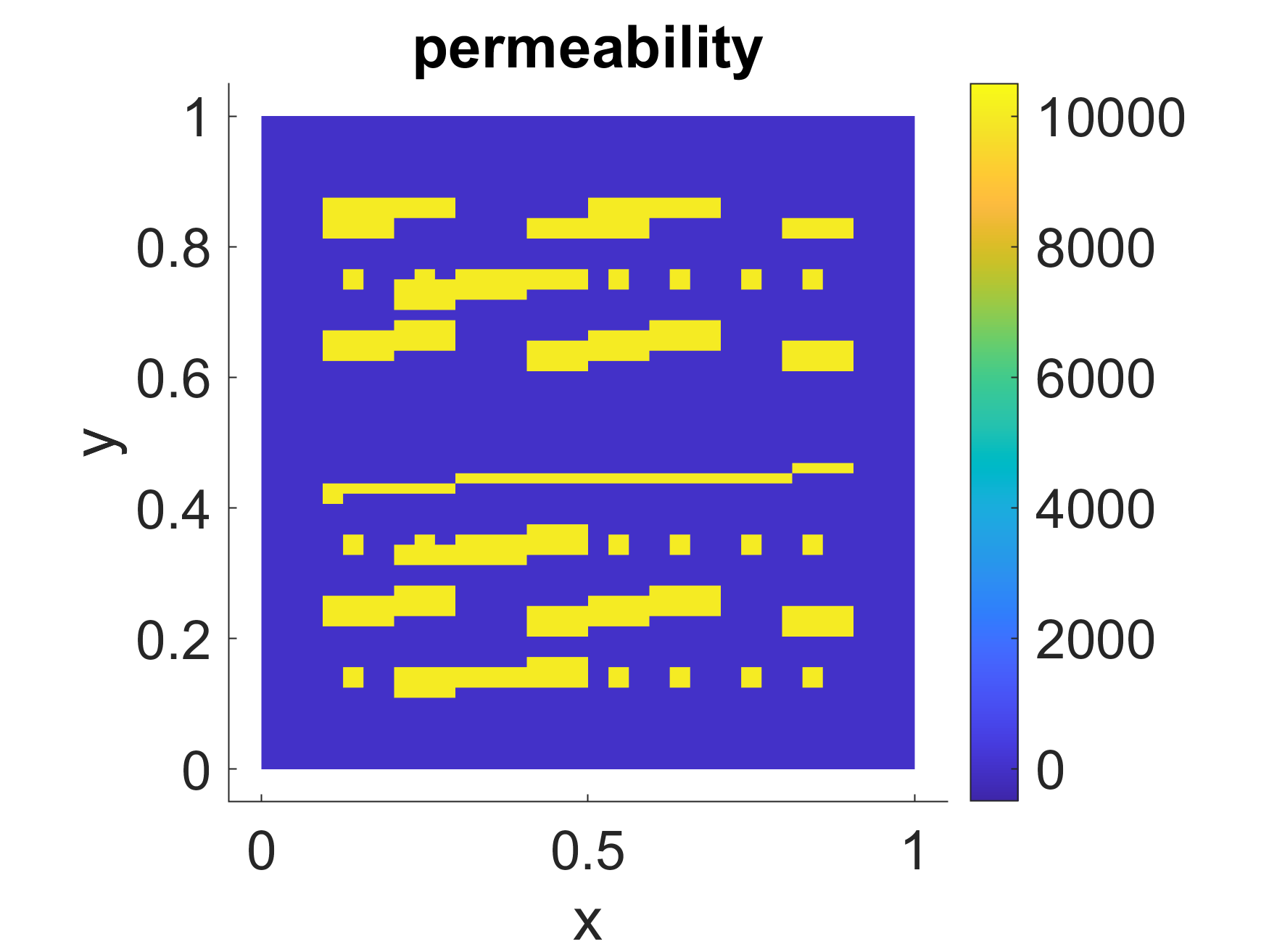}
    \caption{High-contrast coefficient $\kappa_1$}
    \label{fig_high_contrast}
\end{figure}
This section presents two sets of numerical experiments. In the first part, the finite element method is used to generate the full-order data to validate the proposed algorithm. In the second part, the generalized multiscale finite element method is employed to generate the full-order data for optimization. The details of these experiments are elaborated in the following two subsections.

\subsubsection{Comparison between the FEM-based FOM and the DDROM}
First, for the FOM, we employ the finite element method on a $64\times64$ uniform fine grid to compute the reference solutions $(F_{\text{ref}}, U_{\text{ref}}, \Lambda_{\text{ref}})$ and to construct the snapshot space. Subsequently, the RB method is used to select the snapshot matrix and construct a reduced-order model with reduced dimension $r=3$, which serves as the initial guess for the optimization algorithm. The solution of this ROM is denoted by $(F_{\text{rom}}, U_{\text{rom}}, \Lambda_{\text{rom}})$. Based on this, the DDROM and its solution $(F_{\text{ddrom}}, U_{\text{ddrom}}, \Lambda_{\text{ddrom}})$ are further obtained according to Algorithm\ref{alg_L2}.
For the output quantity of interest, we define
$$
y(\mu)=
\left\{
\begin{bmatrix}0\\0\\d\end{bmatrix}+\mu\begin{bmatrix}0\\\widehat{U}_p\\0\end{bmatrix}
\right\}x(\mu).
$$ 
Under the general definition of $y(\mu)$ given above, the absolute and relative errors of the ROM and DDROM are presented in Figure \ref{fig_E1_FEM_y}. In this figure, the blue curve shows the error of the reduced-order model that serves as the initial guess for the algorithm, while the red curve represents the error of the DDROM obtained after optimization by the $\mathcal{L}_2$-Opt-PSF algorithm. Here, $\widehat{y}(\mu)$ denotes the output of the respective reduced-order model, and the reference solution $y(\mu)$ is obtained from the finite element method.
\begin{figure}[H]
    \centering
    \begin{subfigure}[b]{0.48\textwidth}
        \includegraphics[width=\textwidth, height=0.35\textheight, keepaspectratio]{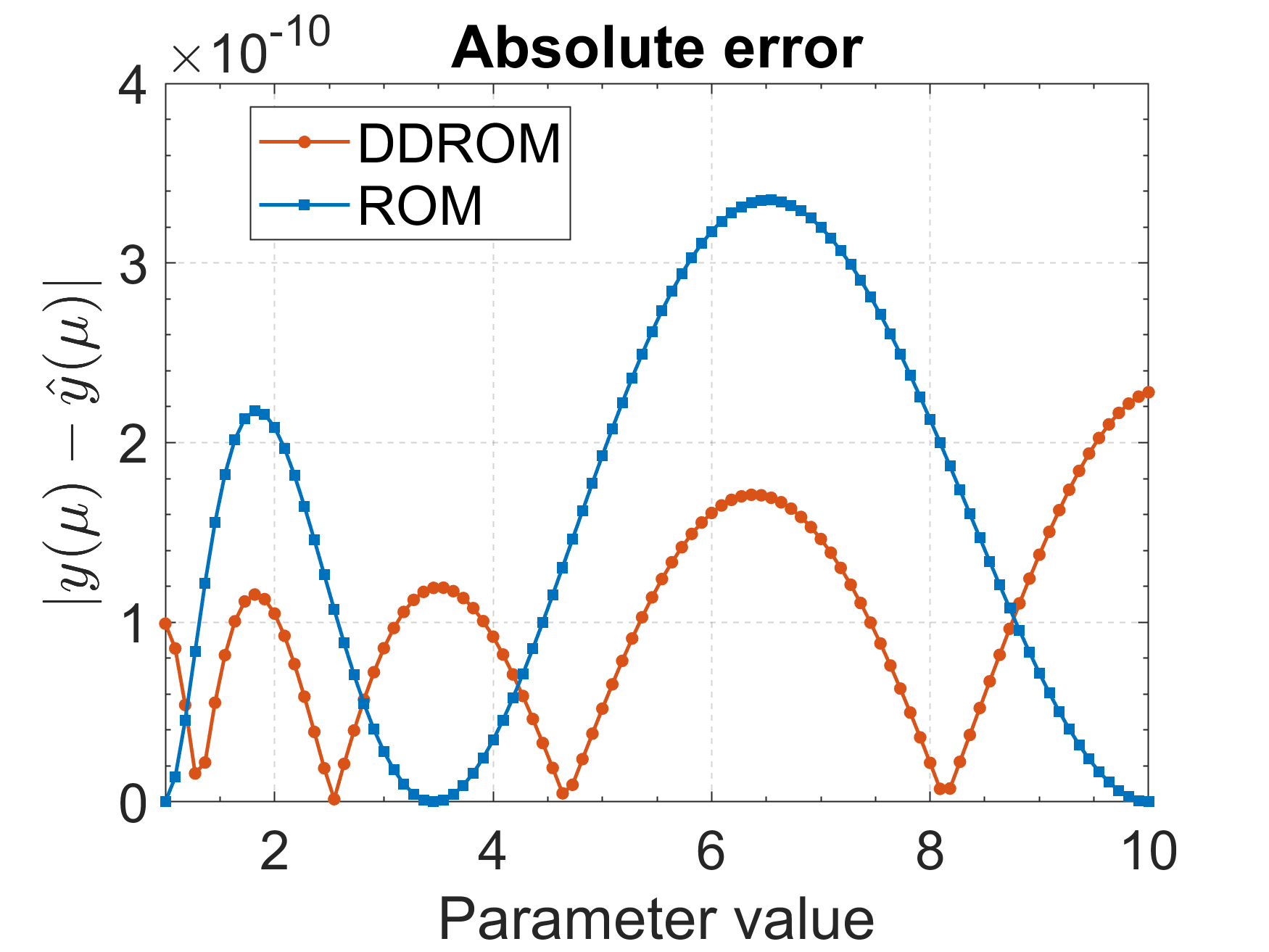}
    \end{subfigure}
    \hfill
    \begin{subfigure}[b]{0.48\textwidth}
        \includegraphics[width=\textwidth, height=0.35\textheight, keepaspectratio]{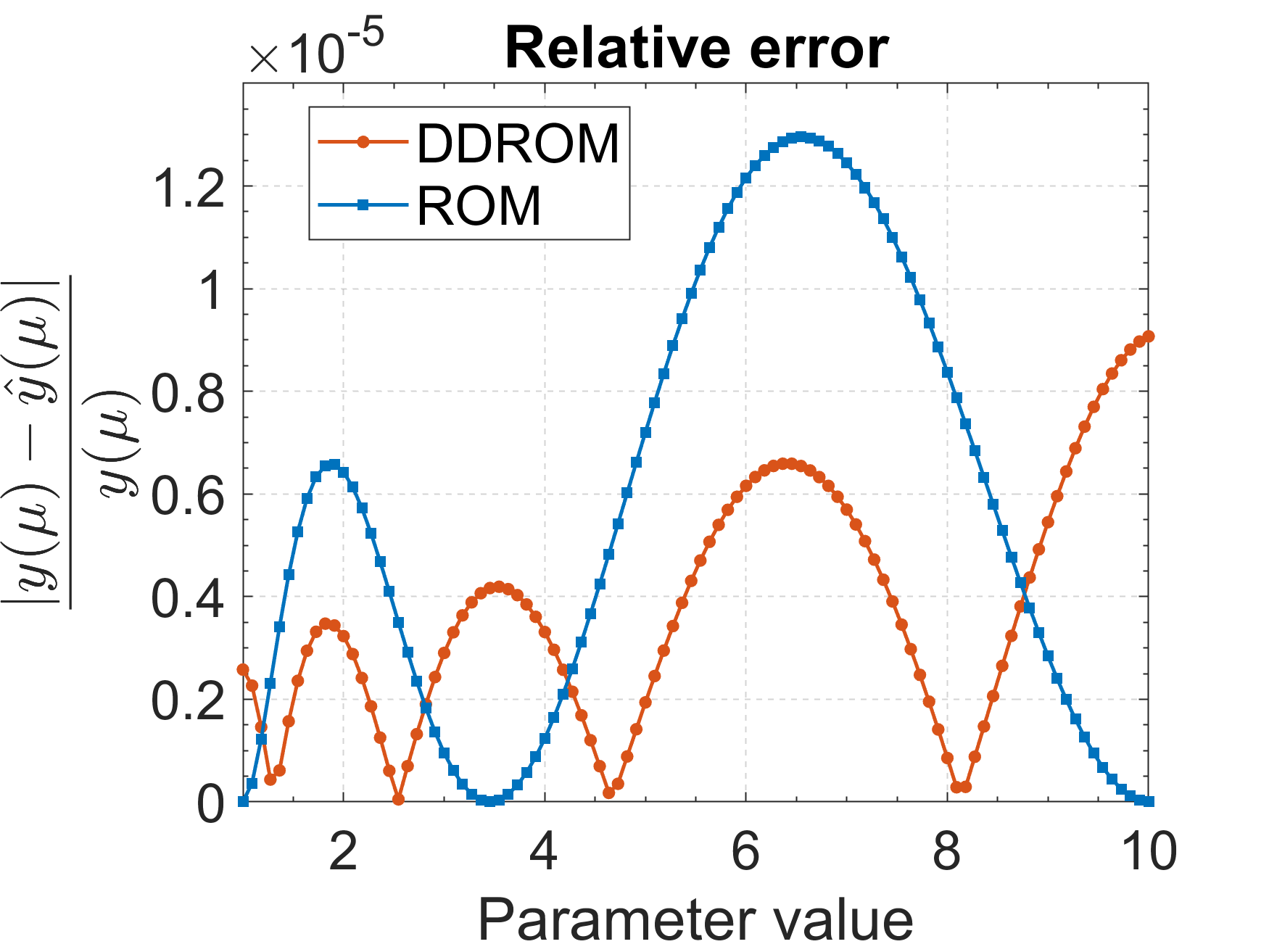}
    \end{subfigure}
    \captionsetup{justification=centering, font=small, skip=6pt}
    \caption{Error plots of the output quantity $y$.}
    \label{fig_E1_FEM_y}
\end{figure}

Since the optimal control problem pays more attention to $U$ and $F$ in \eqref{alge_form}, we need to adjust the matrix 
$C$ in \eqref{int_dis} so that the output $y(\mu)$ is only related to $U$ or $F$. Obviously, we only need to let $y(\mu)$ have the following form respectively,
$$
y(\mu)=
\left\{
\begin{bmatrix}0\\I\\0\end{bmatrix}+\mu\begin{bmatrix}0\\0\\0\end{bmatrix}
\right\}x(\mu),
$$ 
$$
y(\mu)=
\left\{
\begin{bmatrix}I\\0\\0\end{bmatrix}+\mu\begin{bmatrix}0\\0\\0\end{bmatrix}
\right\}x(\mu).
$$ 
The relative error plots are shown in the following figure.
\begin{figure}[H]
    \centering
    \begin{subfigure}[b]{0.48\textwidth}
        \includegraphics[width=\textwidth]{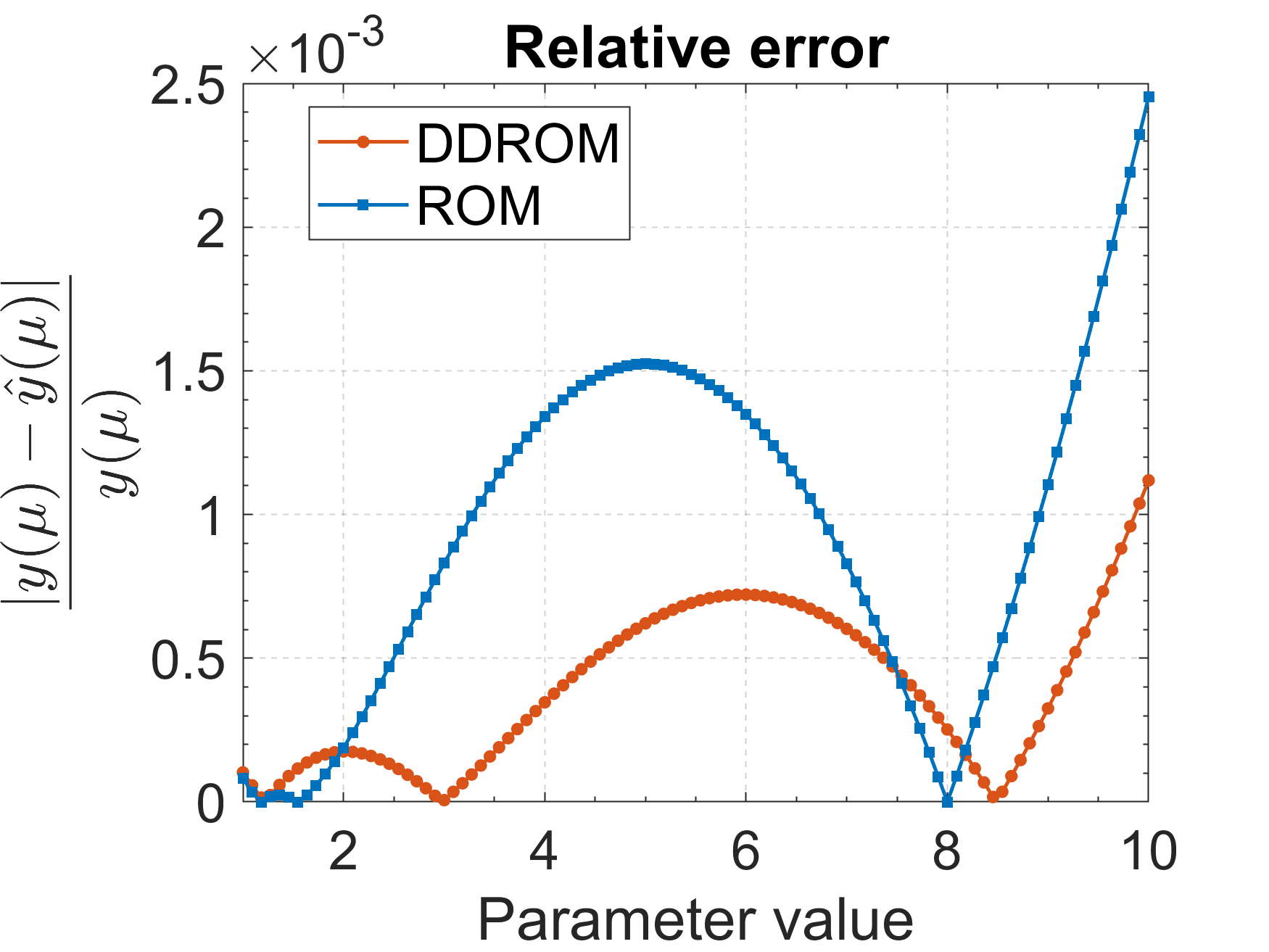}
        \centering (a) Output $y$ related only to $U$
    \end{subfigure}
    \hfill
    \begin{subfigure}[b]{0.48\textwidth}
        \includegraphics[width=\textwidth]{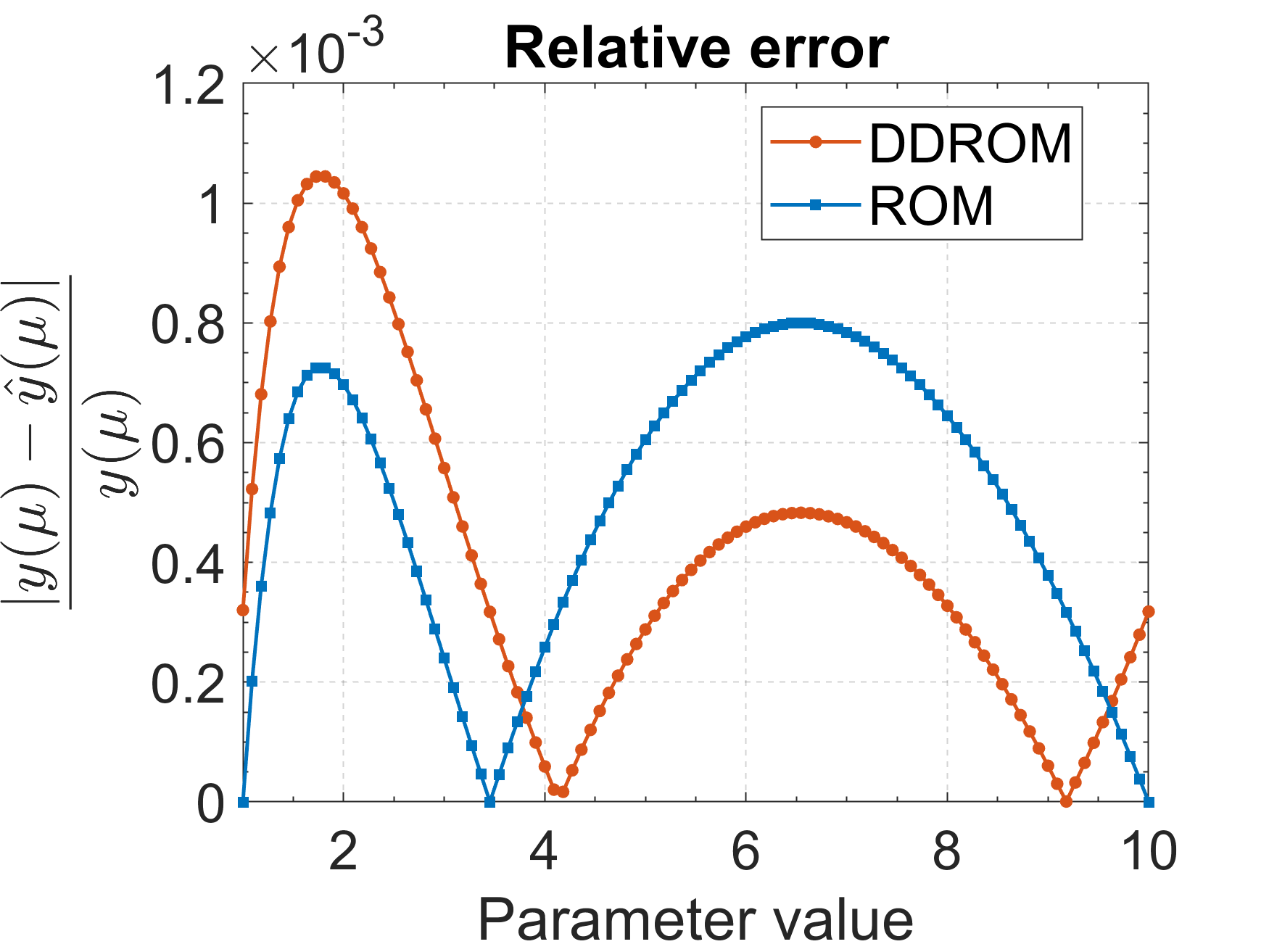}
        \centering (b) Output $y$ related only to $F$
    \end{subfigure}
    \captionsetup{justification=centering, font=small, skip=6pt}
    \caption{Relative error plots for the quantity of interest $y$ related only to $U$ or $F$.}
    \label{fig_E1_FEM_U&F}
\end{figure}

As observed in Figure \ref{fig_E1_FEM_y}, the DDROM output $\widehat{y}(\mu)$ exhibits high consistency with the reference solution $y(\mu)$ over the entire parameter interval, with the relative error remaining at a low level overall. This indicates that the proposed method possesses good approximation capability at the output level. Moreover, the error variation is relatively stable across different parameter values, demonstrating a certain degree of robustness in the parameter space.

Furthermore, when the output depends solely on the state variable $U$ or the control variable $F$ (see Figure \ref{fig_E1_FEM_U&F}), the DDROM still maintains high accuracy. This shows that the proposed method is not only effective in the sense of overall output approximation but also performs well in approximating specific physical variables.

\begin{table}[h]
    \centering
    \caption{\small The $L_2$ relative errors with different methods for the state variable $u$ and the control variable $f$.}
    \begin{tabular}{l l l l}
        \toprule
               & RB & POD & $\mathcal{L}_2$-Opt-PSF \\
        \midrule
        \(e_{u}\) & 1.47e-06 & 1.36e-07 & 1.15e-07\\
        \(e_{f}\)&6.44e-07 & 1.05e-07  &2.30e-07 \\
        \bottomrule
    \end{tabular}
    \label{tab_E1_FEM_method}
\end{table}
To quantitatively evaluate the model performance, we compare the proposed method with two classical model reduction approaches, the RB method and the POD method. The $L_2$ relative errors for the state variable $u$ and the control variable $f$ are respectively defined as
\begin{equation}
    \begin{aligned}
     e_u = \frac{1}{N} \sum_{i=1}^{N} \frac{\| U_{\text{rom}}(\mu_i) - U_{\text{ref}}(\mu_i) \|_{L_2(\Omega)}}{\| U_{\text{ref}}(\mu_i) \|_{L_2(\Omega)}}, \\
    e_f = \frac{1}{N} \sum_{i=1}^{N} \frac{\| F_{\text{rom}}(\mu_i) - F_{\text{ref}}(\mu_i) \|_{L_2(\Omega)}}{\| F_{\text{ref}}(\mu_i) \|_{L_2(\Omega)}}.
    \end{aligned}
\end{equation}

As can be seen from the Table \ref{tab_E1_FEM_method}, the $\mathcal{L}_2$-Opt-PSF method outperforms both the RB and POD methods in approximating the state variable $u$, while for the control variable $f$, it achieves an accuracy comparable to that of the POD method and slightly outperforms the RB method. Overall, the proposed method exhibits a well-balanced approximation capability across different variables, which is consistent with its design philosophy of directly optimizing the output error.

In summary, the proposed $\mathcal{L}_2$-Opt-PSF method outperforms both classical methods in approximating the state and control variables. These results confirm the effectiveness of the method in achieving model reduction by directly minimizing the output error, aligning with its design objective.

\subsubsection{Comparison between the GMsFEM-based FOM and the DDROM}
\label{sec_E1_GMsFEM}
In the numerical experiments of this section, the Generalized Multiscale Finite Element Method is employed 
directly as the full order model solver. First, the computational domain is uniformly discretized into a $128\times128$ fine mesh, upon which an $8\times8$  uniform coarse mesh is constructed. The solution obtained on this coarse mesh serves as the FOM reference solution $(F_{\text{fom}},U_{\text{fom}},\Lambda_{\text{fom}})$ relied upon in this experiment.We then select a snapshot matrix to derive the ROM with $r=3$ by RB method, which serves as the initial value for the algorithm. For the output in the model, we let 
$$
y(\mu)=
\left\{
\begin{bmatrix}0\\0\\d\end{bmatrix}+\mu\begin{bmatrix}0\\\widehat{U}_p\\0\end{bmatrix}
\right\}x(\mu).
$$ 
\begin{figure}[H]
    \centering
    \begin{subfigure}[b]{0.48\textwidth}
        \includegraphics[width=\textwidth]{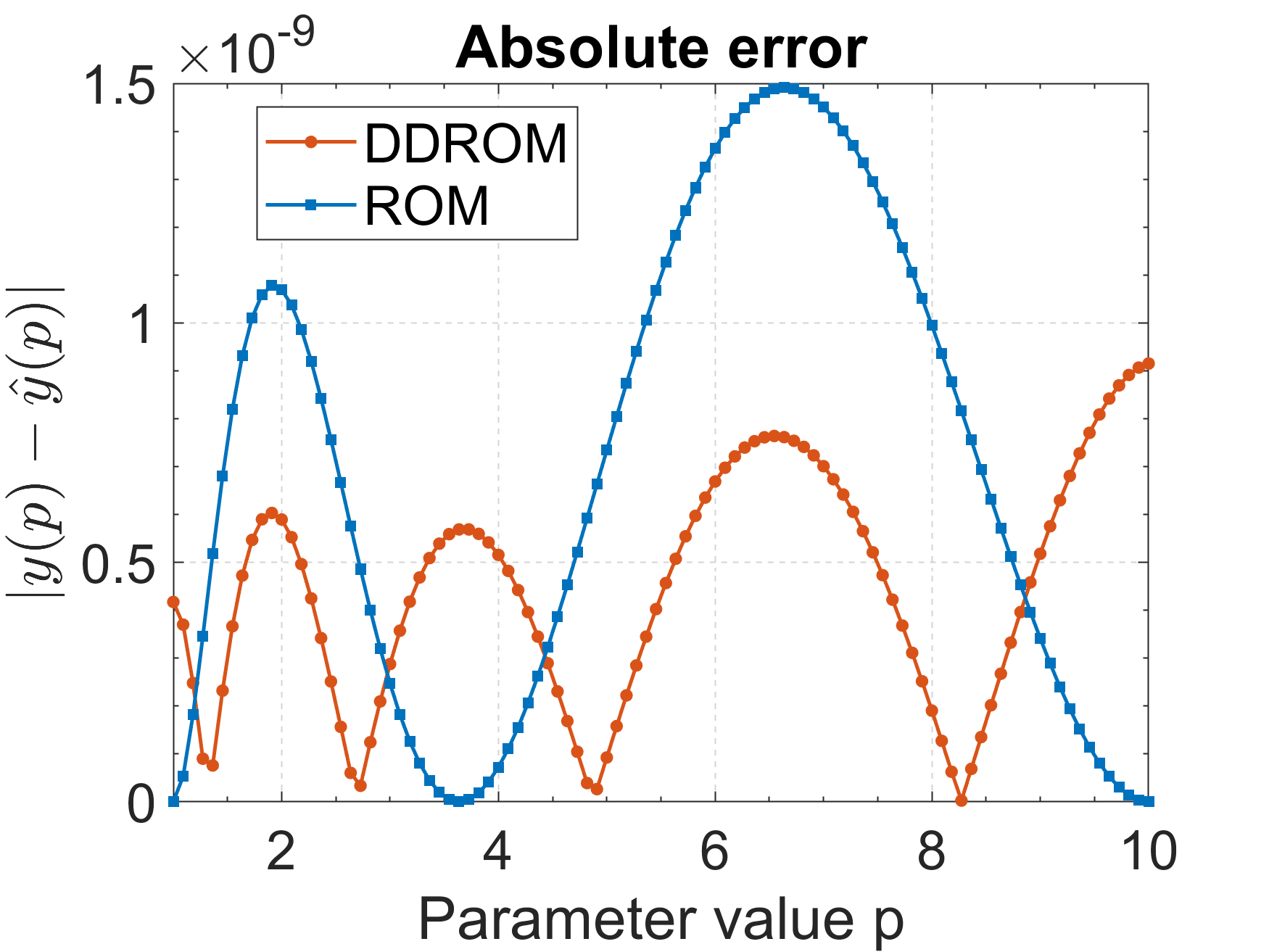}
    \end{subfigure}
    \hfill
    \begin{subfigure}[b]{0.48\textwidth}
        \includegraphics[width=\textwidth]{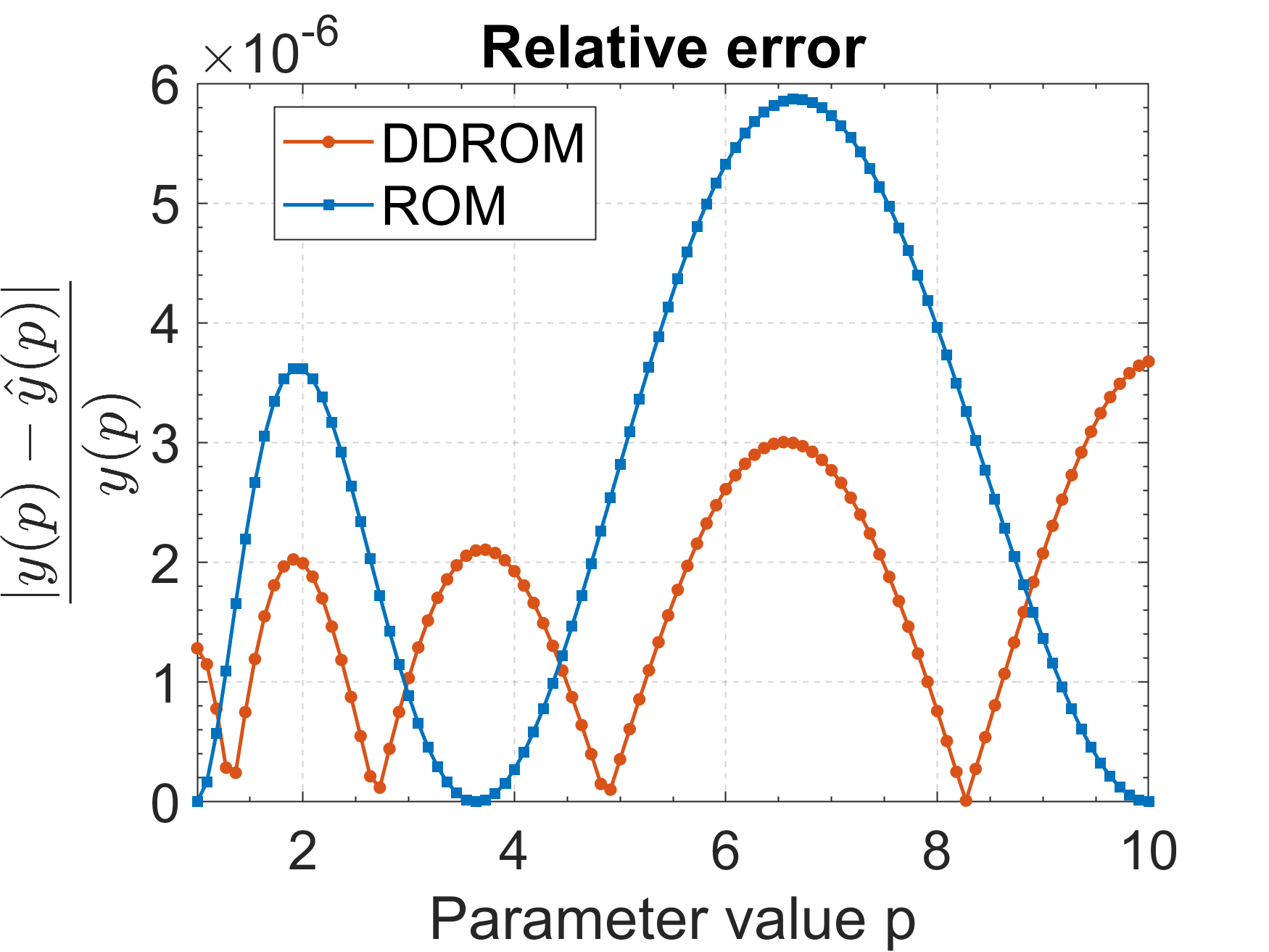}
    \end{subfigure}
    \captionsetup{justification=centering, font=small, skip=6pt}
    \caption{Error plots of the output quantity $y$.}
    \label{fig_E1_GMsFEM_err_128_8}
\end{figure}
As can be observed from Figure \ref{fig_E1_GMsFEM_err_128_8}, the consistency between the DDROM output $\widehat{y}(\mu)$, optimized by the $\mathcal{L}_2$-Opt-PSF algorithm, and the reference solution $y(\mu)$ is significantly better than that of the initial reduced-order model over the entire parameter interval $p \in [0.1,10]$. This demonstrates that the proposed $\mathcal{L}_2$-Opt-PSF algorithm effectively enhances the output approximation accuracy of the reduced-order model by directly minimizing the output error.

Figure \ref{fig_E1_U_128_8} presents the contour plots of the target solution $U$ averaged over the 100 test parameter points, and Figure \ref{fig_E1_F_128_8} presents the contour plots of the target solution $F$ averaged over the same set of parameters. The three columns of subfigures correspond, from left to right, to the reference solution, the initial reduced-order model solution constructed by the RB method, and the data-driven reduced-order model solution obtained after optimization by the $\mathcal{L}_2$-Opt-PSF algorithm.

\begin{figure}[H]
    \centering
    \begin{subfigure}[htb]{0.33\textwidth}
        \includegraphics[width=\textwidth]{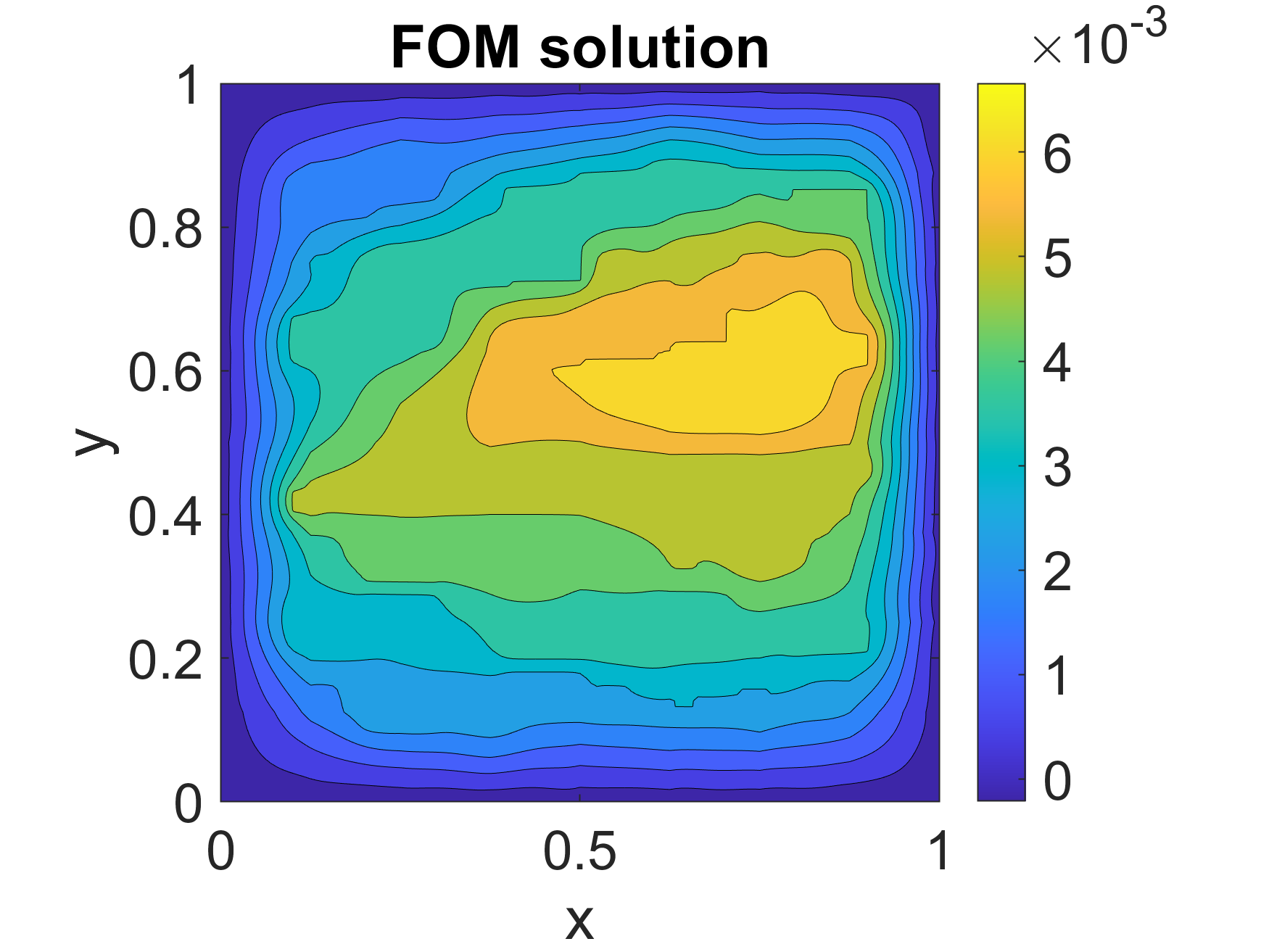}

    \end{subfigure}
    \hspace{-8pt}
    \begin{subfigure}[htb]{0.33\textwidth}
        \includegraphics[width=\textwidth]{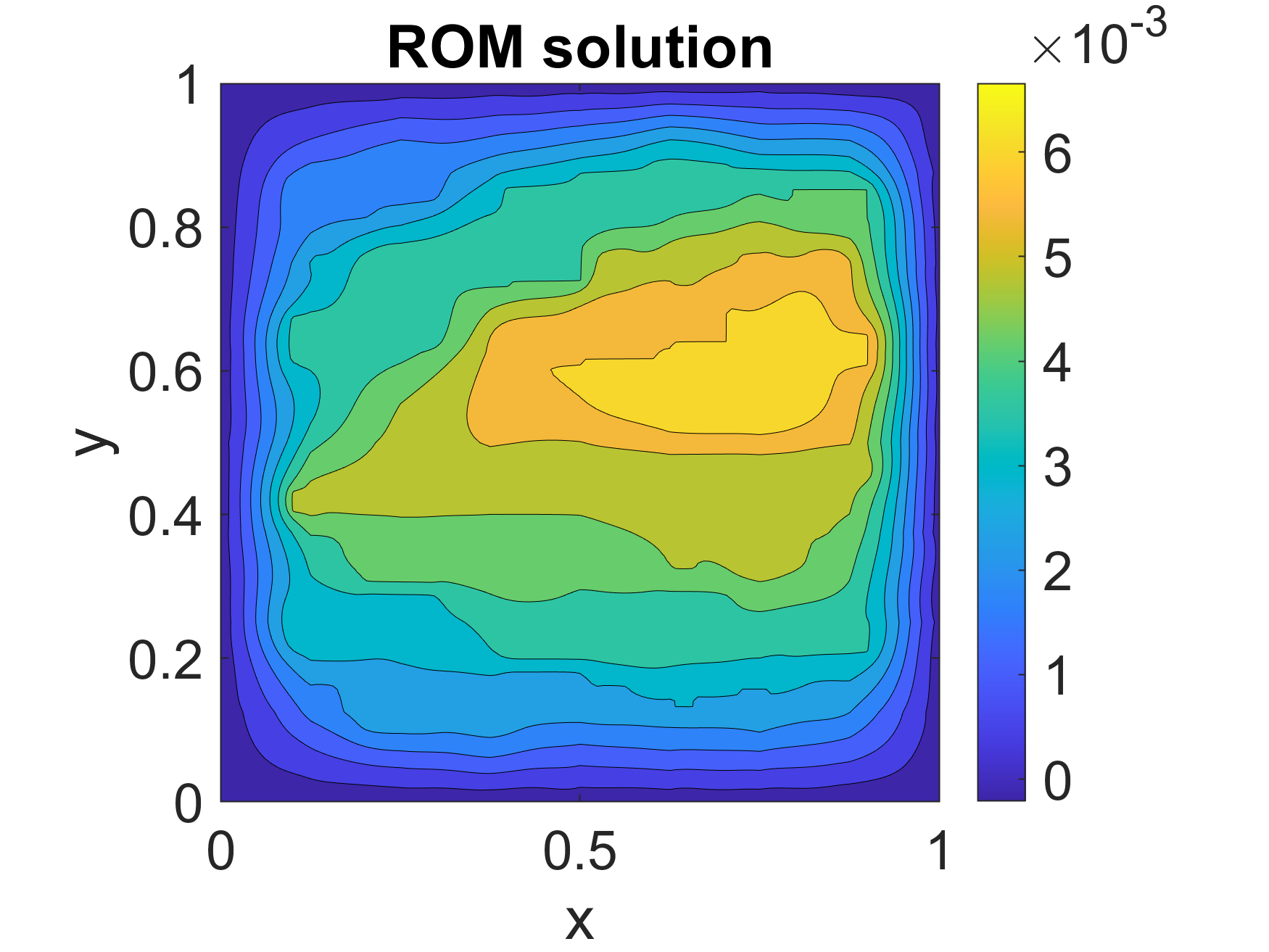}
    \end{subfigure}
    \hspace{-8pt}
    \begin{subfigure}[htb]{0.33\textwidth}
        \includegraphics[width=\textwidth]{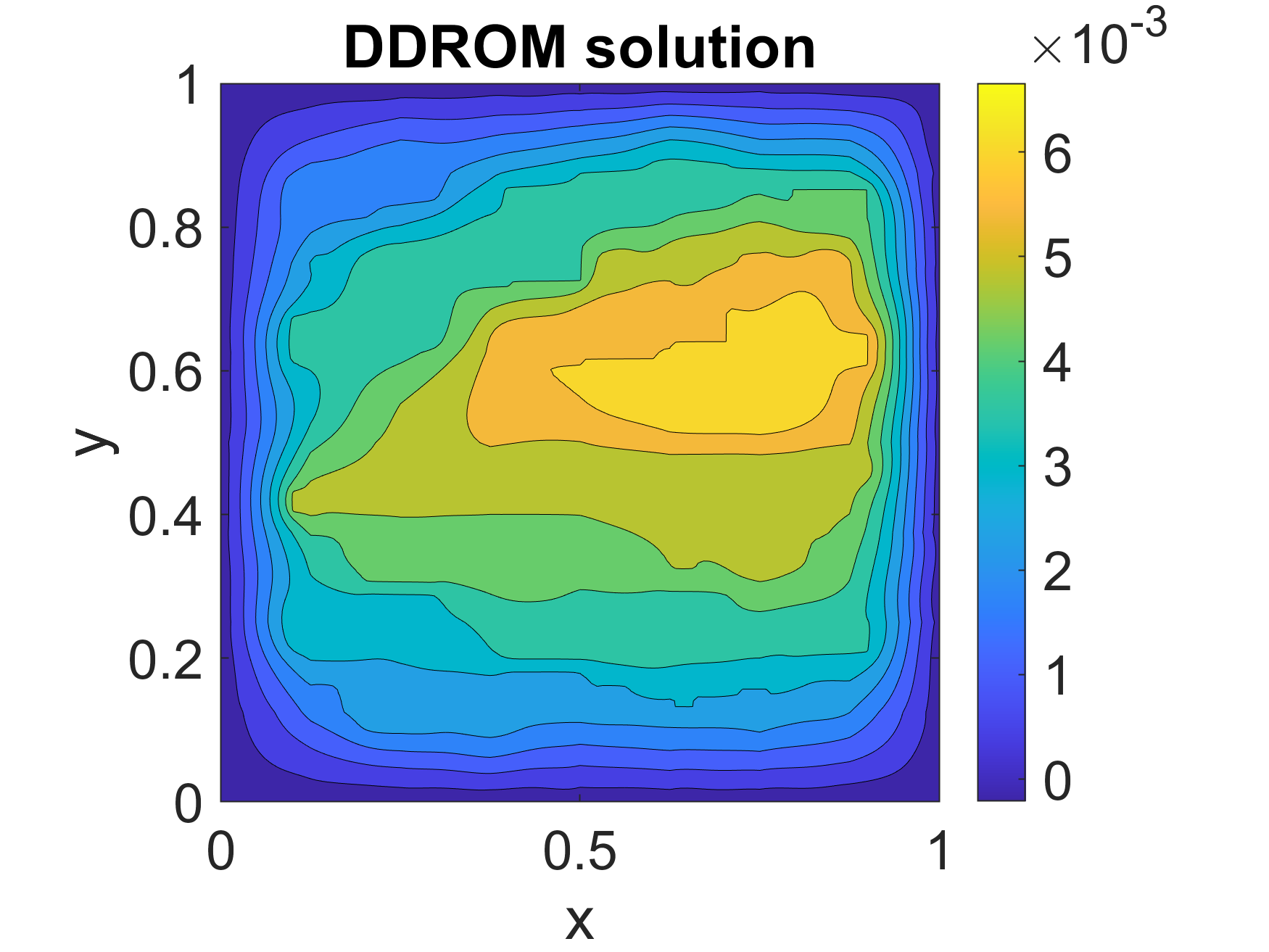}
    \end{subfigure}
    \captionsetup{justification=centering, font=small, skip=6pt}
    \caption{Averaged contour plots of solution $U$ over 100 test parameter points.}
    \label{fig_E1_U_128_8}
\end{figure}

\begin{figure}[H]
    \centering
    \begin{subfigure}[htb]{0.33\textwidth}
        \includegraphics[width=\textwidth]{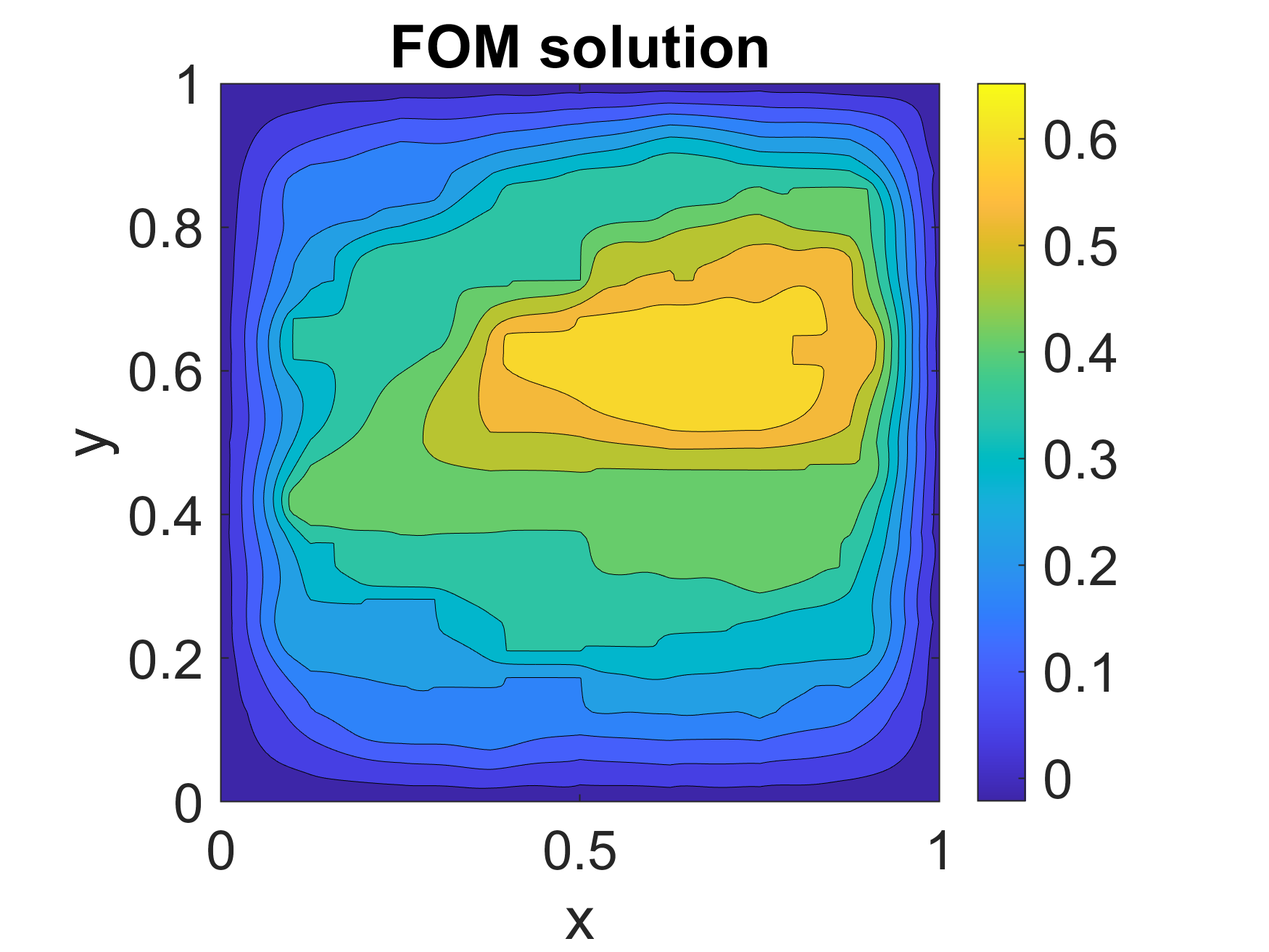}

    \end{subfigure}
    \hspace{-8pt}
    \begin{subfigure}[htb]{0.33\textwidth}
        \includegraphics[width=\textwidth]{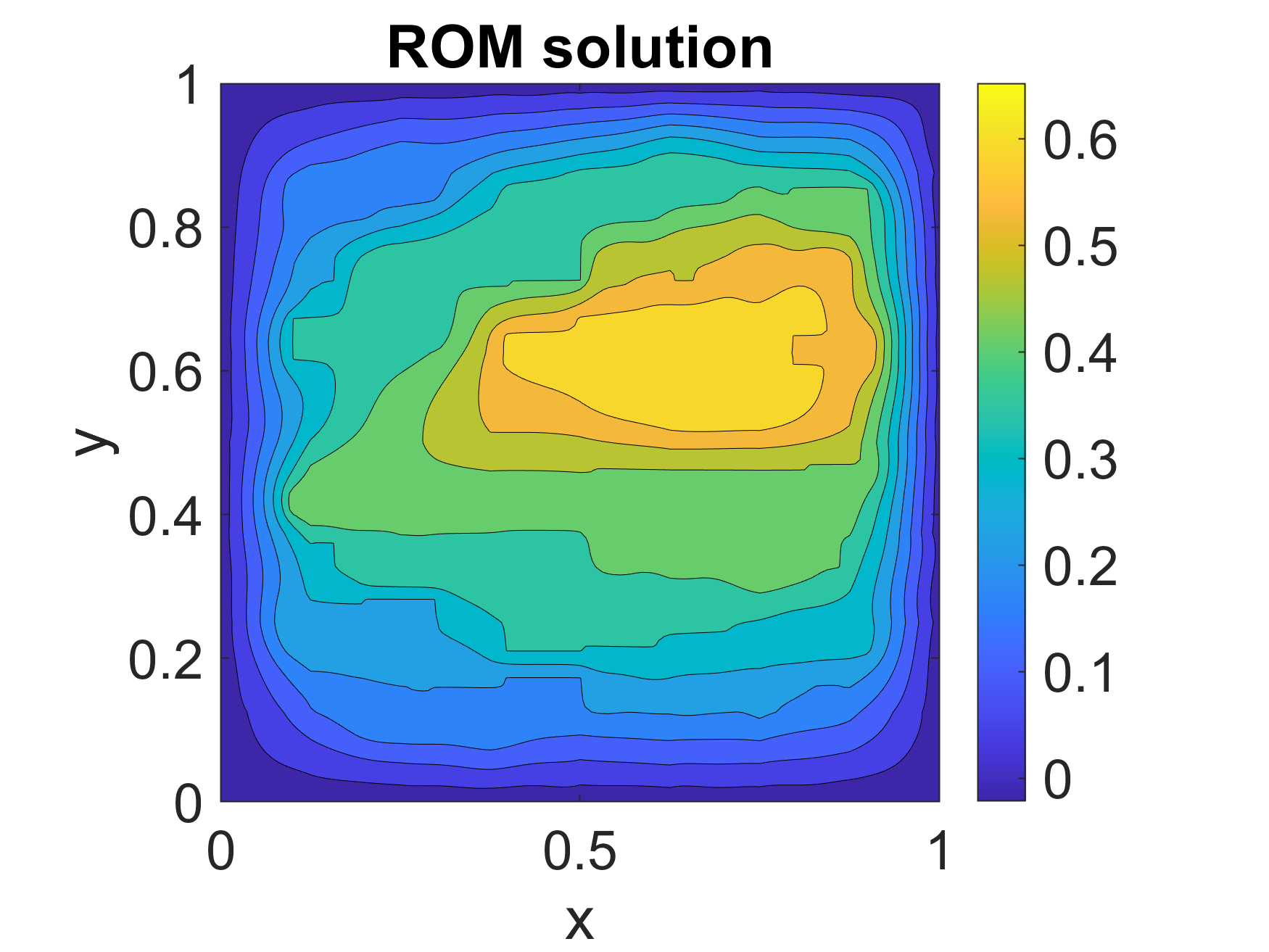}

    \end{subfigure}
    \hspace{-8pt}
    \begin{subfigure}[htb]{0.33\textwidth}
        \includegraphics[width=\textwidth]{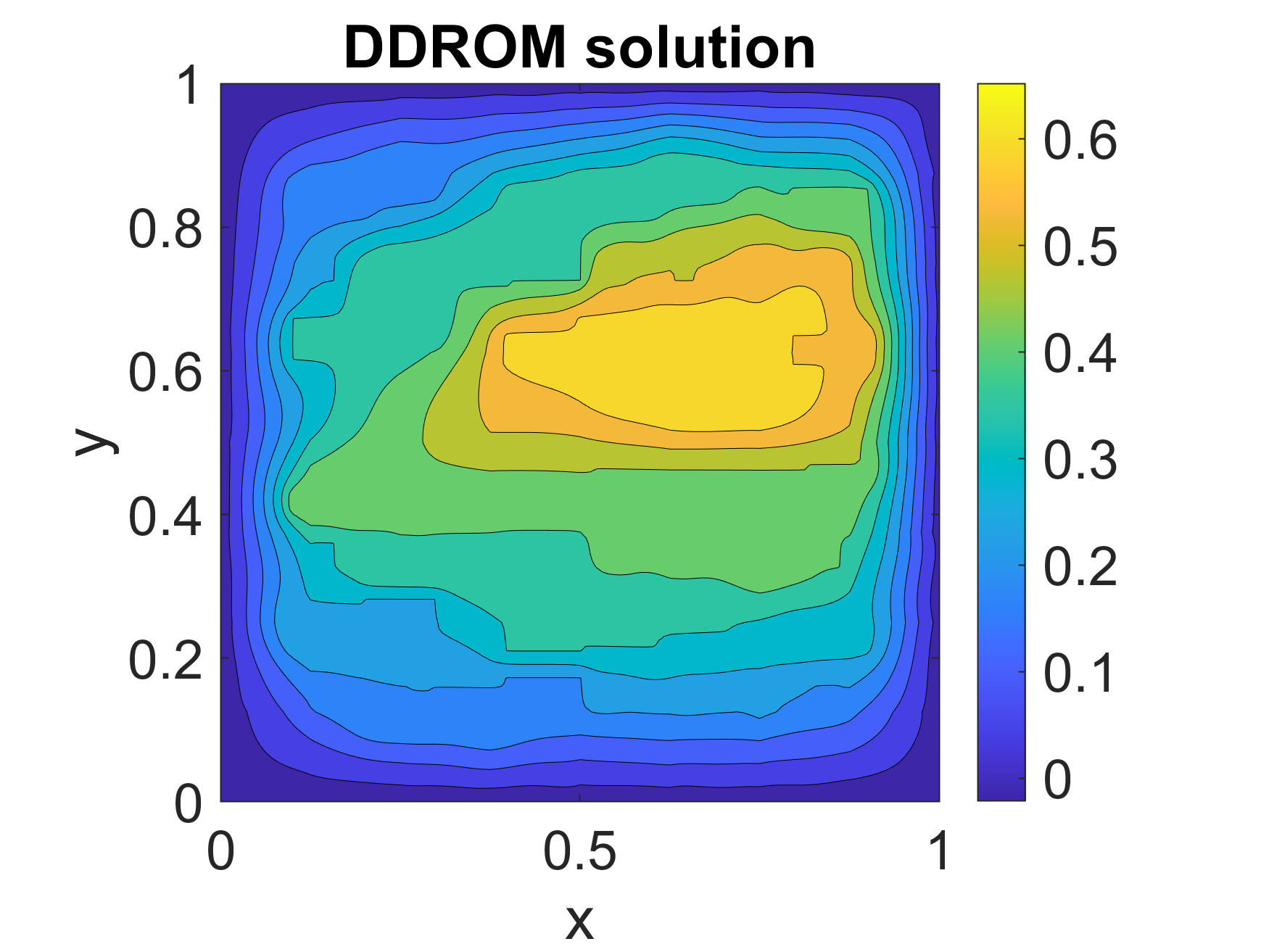}

    \end{subfigure}
    \captionsetup{justification=centering, font=small, skip=6pt}
    \caption{Averaged contour plots of solution $F$ over 100 test parameter points.}
    \label{fig_E1_F_128_8}
\end{figure}

As can be seen from Figure \ref{fig_E1_U_128_8}, the initial ROM solution already exhibits high consistency with the reference solution in terms of the overall distribution, indicating that the reduced-order model constructed by the RB method inherently possesses good approximation capability. In comparison, the DDROM solution remains highly consistent with the reference solution visually, but the improvement over the initial ROM is not particularly pronounced.

This phenomenon can be explained from two perspectives. First, according to the quantitative error results, the relative error of the initial ROM for the state variable $U$ has already reached a low level, representing a relatively high accuracy. On this basis, the further optimization space for the $\mathcal{L}_2$-Opt-PSF algorithm is limited, making it difficult to observe significant differences in the visual results. Second, the optimization objective of the proposed method is the $\mathcal{L}_2$ error of the output quantity $y(\mu)$, rather than the direct approximation error of the state variable $U$ or the control variable $F$. Therefore, the improvement is primarily reflected at the output level rather than in the intuitive changes of the solution fields.

Similarly, as observed in Figure \ref{fig_E1_F_128_8}, the contour distribution of the control variable $F$ exhibits a consistent pattern: the initial ROM solution already agrees well with the reference solution, while the DDROM solution shows no significant visual difference compared to the initial ROM, which is consistent with the above analysis.

To verify the convergence of the proposed method and determine the optimal reduced dimension, we investigate the behavior of the output error $e_y$ for reduced dimensions $r$ ranging from $1$ to $5$. The test set consists of $100$ uniformly sampled parameter points over the interval $[0.1,10]$. All other parameters are fixed, and only $r$ is varied. The error is defined as the average $\mathcal{L}_2$ relative error:
\begin{equation}
		e_y = \frac{1}{N} \sum_{i=1}^{N} \frac{\lVert y_{\text{rom}}(\mu_i) - y_{\text{ref}}(\mu_i) \rVert_{L_2(\Omega)}}{\lVert y_{\text{ref}}(\mu_i) \rVert_{L_2(\Omega)}}, 
\end{equation}
which is consistent with the $\mathcal{L}_2$ optimization objective of this paper and effectively reflects the overall approximation performance of the reduced-order model over the parameter space.

\begin{table}[htbp]
	\centering
	\caption{Average $\mathcal{L}_2$ relative errors of the output quantity $y$ with different reduced dimensions $r$.}
	\label{tab_E1_error_vs_r}
	\begin{tabular}{cccc}
		\toprule
		$r$ & ROM & DDROM  & Improvement \\
		\midrule
		1 & 2.367e-01 & 1.736e-01 & 26.66\% \\
		2 & 1.200e-03 & 8.259e-04 & 31.18\% \\
		3 & 3.927e-06 & 3.061e-06 & 22.05\% \\
		4 & 3.512e-08 & 2.660e-08 & 24.26\% \\
		5 & 4.468e-10 & 3.588e-10 & 19.70\% \\
		\bottomrule
	\end{tabular}
\end{table}

As shown in Table \ref{tab_E1_error_vs_r}, as $r$ increases from $1$ to $5$, the output errors of both the ROM and DDROM decrease exponentially, from the order of $10^{-1}$ down to $10^{-10}$, fully validating the convergence of the method. For $r \geq 3$, the error already falls below the order of $10^{-6}$, achieving a satisfactorily high accuracy. Therefore, we choose $r = 3$ as the reduced dimension in our numerical experiments to strike a balance between accuracy and computational efficiency. Moreover, for all tested values of $r$, the DDROM consistently yields lower errors than the ROM, with improvement ratios ranging from $20\%$ to $30\%$. This demonstrates that the $\mathcal{L}_2$-Opt-PSF optimization strategy steadily enhances the approximation accuracy across different model complexities.

To evaluate the computational efficiency of the proposed method, we compare the CPU times of the FOM, ROM and DDROM under the same hardware and software environment. The offline time is reported as the total seconds required for construction. The average online time is computed over 10000 test parameter points. Based on these, we further report the total time for 10000 consecutive queries and the corresponding average total time per query. The results are summarized in Table~\ref{tab_E1_cpu}.

\begin{table}[htbp]
    \centering
    \caption{The CPU time for different models.}
    \label{tab_E1_cpu}
    \begin{tabular}{l p{2.5cm} p{2.5cm} p{2.5cm}}
        \toprule
        CPU(s) &FOM  &ROM  &DDROM \\
        \midrule
        Offline time       & 9.103E-01 & 9.104E-01 & 9.384E-01 \\
        Average online time & 1.062E-02 & 2.131E-05 & 1.179E-05 \\
        Total time       & 1.071E+02 & 1.123E+00 & 1.056E+00 \\
        Average total time & 1.071E-02 & 1.123E-04 & 1.056E-04 \\
        \bottomrule
    \end{tabular}
\end{table}

Based on Table~\ref{tab_E1_cpu}, the offline times for all three models are comparable, as the dominant cost (GMsFEM basis construction and matrix projection) is shared across all models. In terms of online efficiency, the DDROM achieves an average online time of $1.179\times10^{-5}$ seconds, which is approximately 900 times faster than the FOM and about 1.8 times faster than the ROM. For 10000 consecutive queries, the DDROM reduces the total computational cost by 101 times compared to the FOM. These results demonstrate that the proposed $\mathcal{L}_2$-Opt-PSF method is highly efficient for this stochastic optimal control problem, particularly for many query applications such as real-time control and uncertainty quantification.

In summary, the proposed $\mathcal{L}_2$-Opt-PSF method demonstrates strong performance in convergence, accuracy, and efficiency for the stochastic diffusion optimal control problem. With a reduced dimension of $r=3$, it attains an output error of order $10^{-6}$. In terms of computational efficiency, for 10000 consecutive queries, the DDROM reduces the total computational cost by 101 times compared to the FOM. These results confirm the effectiveness and practicality of the proposed method for optimal control problems.

\subsection{Optimal control for stochastic advection-diffusion equation}
In this example, we consider a stochastic advection-diffusion equation as the governing constraint. Specifically, we study the following stochastic optimal control problem defined on the two-dimensional unit square $\Omega = [0,1]^2$:
\begin{equation}
\left\{  
    \begin{aligned}
     \min_{u,f}J(u,f)&=\frac{1}{2}\|u(x,\mu)-\hat{u}(x,\mu)\|_{\mathscr{L}^{2}(\Omega)}^{2}+\beta\|f(x,\mu)\|_{\mathscr{L}^{2}(\Omega)}^{2},\\
     s.t.-div(&\kappa^{ad}(x,\mu)\nabla u(x,\mu))+\delta (x)\cdot\nabla u(x,\mu)=f(x,\mu),\; \text{in}\;\Omega,\\
     &u(x,\mu) = 0,\;\text{on}\;\partial\Omega,
    \end{aligned}
\right.  
\end{equation}
where the desired state function $\hat{u}(x,\mu)$, the diffusion coefficient $\kappa^{ad}(x,\mu)$, and the advection field $\delta(x) = [\delta_1(x), \delta_2(x)]^\top \in \mathbb{R}^2$ are defined as follows:

$$
\left\{  
    \begin{aligned}
    \hat{u}(x,\mu)&=x_1x_2(1-x_1)(1-x_2)+\mu x_1^2x_2^2(1-x_1)(1-x_2),\\
     \kappa^{ad}(x,\mu)&=\kappa_1(x)+\mu(1+x_1),\\
     \delta_1(x)&=1+x_1+x_1^2,\\
     \delta_2(x)&=1+x_2+x_2^2.
    \end{aligned}
\right. 
$$
Here, $\kappa_1(x)$ denotes the same high-contrast coefficient shown in Figure \ref{fig_high_contrast}. 

In the above problem, the diffusion coefficient $\kappa^{ad}(x,\mu)$ consists of a high-contrast function $\kappa_1(x)$ and a parameter-dependent term, endowing the problem with multiscale features in space. Due to the presence of the advection term $\boldsymbol{\delta}(x)\cdot\nabla u(x,\mu)$, the solution exhibits pronounced directional variations in space. Moreover, the diffusion coefficient may become small in certain regions, making the advection effect dominant there. The parameter $\mu$ further increases the complexity of the problem, introducing a strongly nonlinear dependence of the solution on the parameter. Consequently, the problem simultaneously exhibits multiscale behavior, locally advection-dominated characteristics, and parameter dependence, imposing high demands on the accuracy and stability of the model reduction method.

To reduce the high offline computational cost associated with constructing snapshots for the full-order model (FOM), we continue to incorporate the generalized multiscale finite element method (GMsFEM) into the overall computational framework. In this setup, the domain is discretized into a uniform fine mesh of size $128 \times 128$ to adequately resolve the multiscale structure. By constructing multiscale basis functions, the computation is transferred to a coarse mesh of size $8 \times 8$, significantly reducing the computational complexity. Snapshot matrices are constructed from the GMsFEM solutions on the coarse mesh, from which a reduced-order model (ROM) of dimension $r$ is generated. This ROM then serves as the initial guess for the $\mathcal{L}_2$-Opt-PSF optimization, ultimately yielding a data-driven reduced-order model (DDROM).

As demonstrated by the numerical experiments in Section \ref{sec_E1_GMsFEM}, the choice of the reduced dimension $r$ directly affects the trade-off between approximation accuracy and computational efficiency when using the $\mathcal{L}_2$-Opt-PSF algorithm. If $r$ is too small, the reduced-order model fails to capture the essential features of the original problem, leading to large approximation errors. If $r$ is too large, the online computational burden increases, and redundant basis functions may be introduced, potentially causing overfitting. Therefore, it is necessary to determine an appropriate $r$ that balances accuracy against efficiency in this experiment.

To investigate the convergence behavior of $e_y$ with respect to $r$ and to determine the optimal reduced dimension, we fix all other parameters and vary $r$ from $1$ to $5$. A test set consisting of $100$ uniformly sampled parameter points in the interval $[0.1,10]$ is used to compute the corresponding average output error $e_y$ for each $r$. The results are presented in Table \ref{tab_E2_error_vs_r}.

\begin{table}[htbp]
	\centering
	\caption{Average $\mathcal{L}_2$ relative errors of the output quantity $y$ for the stochastic advection-diffusion problem with different reduced dimensions $r$.}
	\label{tab_E2_error_vs_r}
	\begin{tabular}{c c c c}
		\toprule
		$r$ & ROM & DDROM & Improvement \\
		\midrule
		1 & 2.282e-01 & 1.733e-01 & 24.06\% \\
		2 & 5.083e-01 & 1.629e-01 & 67.95\% \\
		3 & 9.130e-02 & 5.550e-02 & 39.21\% \\
		4 & 1.100e-03 & 8.004e-04 & 27.24\% \\
		5 & 7.933e-05 & 5.820e-05 & 26.63\% \\
		\bottomrule
	\end{tabular}
\end{table}

The results in Table \ref{tab_E2_error_vs_r} demonstrate that the DDROM error decays exponentially as $r$ increases, thereby confirming the method's convergence. When $r$ is further increased from $4$ to $5$, the error continues to decrease to $5.82\times10^{-5}$, but the reduction rate slows down significantly, indicating that the model has converged. Balancing both accuracy and computational efficiency, $r=4$ is a suitable choice for the reduced dimension for this problem. For scenarios that demand extremely fast computation, $r=3$ may serve as an alternative.

After determining the reduced dimension $r=4$, we further evaluate the approximation performance of the proposed method over the parameter space. A set of sample points is selected in the test parameter interval $[0.1,10]$, and the absolute and relative errors of both the ROM and DDROM are computed with respect to the reference solution. The resulting error curves as functions of the parameter are shown in Figure \ref{fig_E2_GMsFEM_err}.

\begin{figure}[H]
    \centering
    \begin{subfigure}[b]{0.48\textwidth}
        \includegraphics[width=\textwidth]{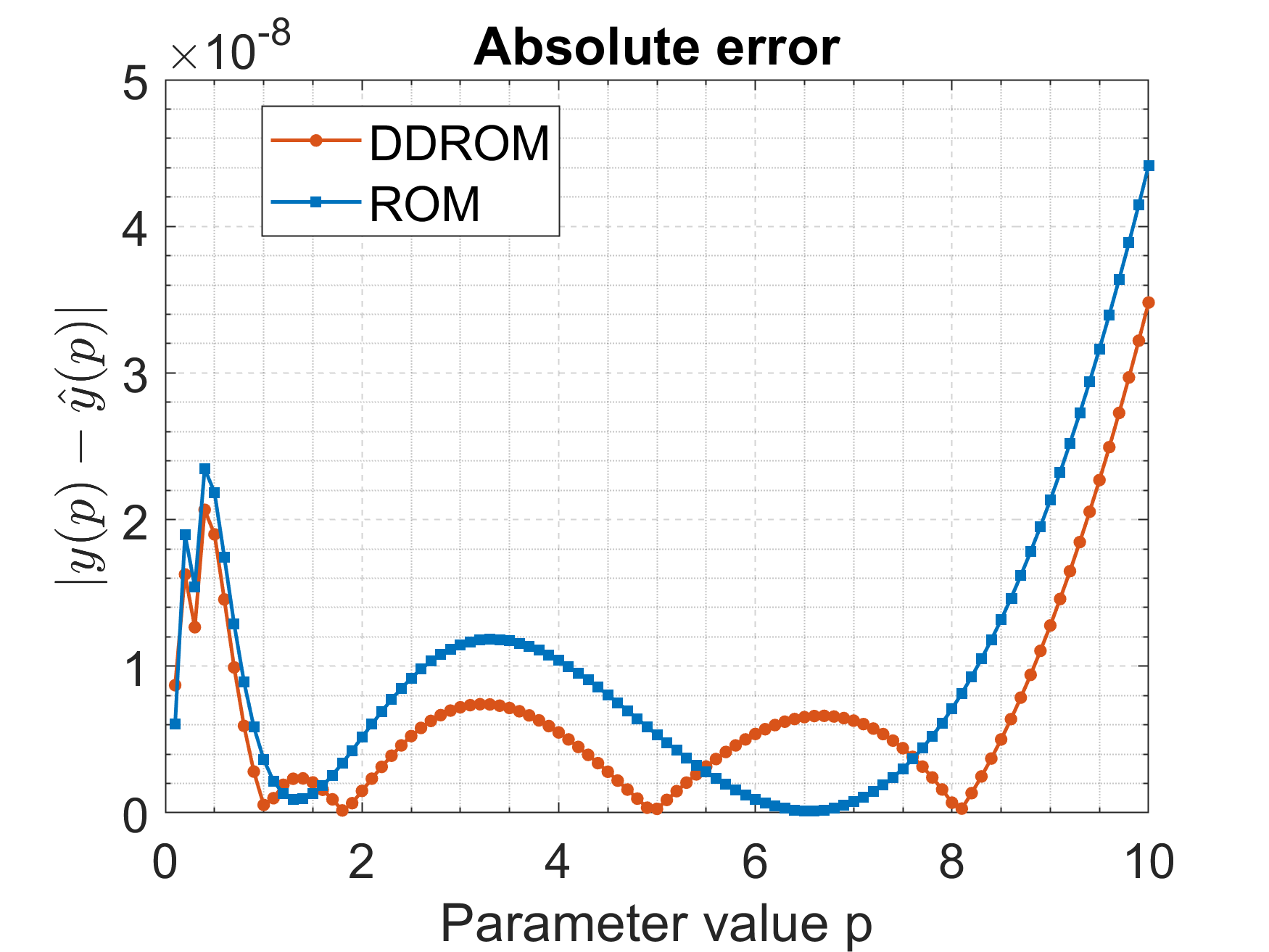}
    \end{subfigure}
    \hfill
    \begin{subfigure}[b]{0.48\textwidth}
        \includegraphics[width=\textwidth]{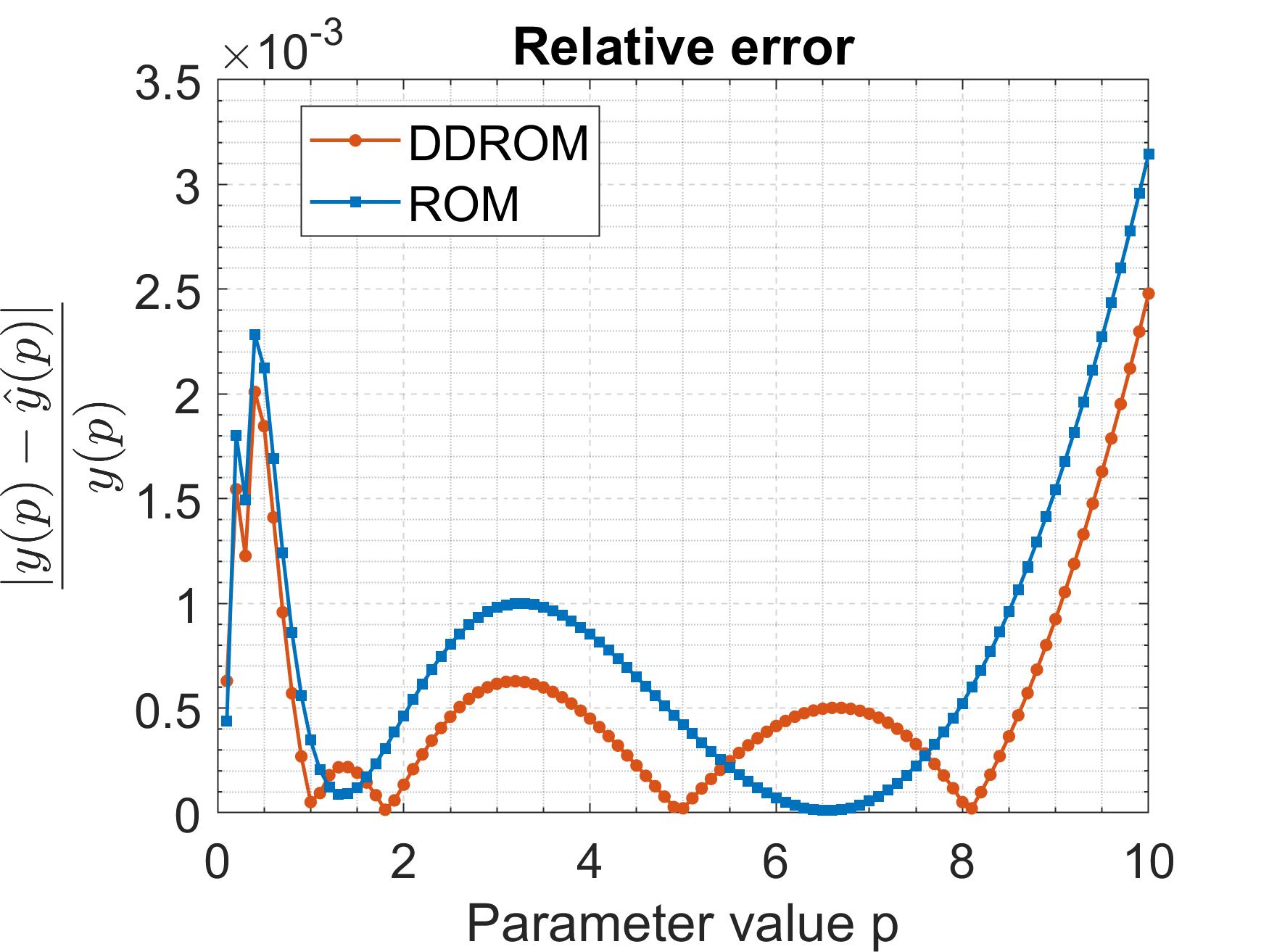}
    \end{subfigure}
    \captionsetup{justification=centering, font=small, skip=6pt}
    \caption{Error plots for the quantity of interest $y$.}
    \label{fig_E2_GMsFEM_err}
\end{figure}

As shown in the figure, both types of errors remain at low levels over the entire parameter interval---the absolute error is on the order of $10^{-10}$ and the relative error on the order of $10^{-5}$---indicating that the constructed reduced-order model achieves high approximation accuracy. A further comparison reveals that the DDROM yields significantly lower errors than the initial ROM in most parameter regions, especially for larger parameter values (e.g., $\mu \to 10$), where the optimization leads to more pronounced error reduction. This demonstrates that the $\mathcal{L}_2$-Opt-PSF method can effectively mitigate the error accumulation issues present in the initial reduced-order model.

Moreover, the error curves vary smoothly over the parameter space without exhibiting noticeable oscillations or local instabilities, confirming the good stability and robustness of the proposed method across different parameter values. In summary, Figure \ref{fig_E2_GMsFEM_err} quantitatively verifies the advantages of the proposed two-stage method in terms of both accuracy and stability. These results indicate that combining multiscale approximation with data-driven optimization can effectively enhance the performance of model reduction for complex parameter-dependent problems.

To further validate the approximation performance of the proposed method for the advection-diffusion optimal control problem, we select a representative parameter value $\overline{\mu}$ and perform a visual analysis of the spatial distributions of the state variable $U$ and the control variable $F$. Figures \ref{fig_E2_u_128_8} and \ref{fig_E2_f_128_8} present the corresponding contour plots, while Figures \ref{fig_E2_U} and \ref{fig_E2_F} show the three-dimensional surface comparisons.

\begin{figure}[H]
    \centering
    \begin{subfigure}[htb]{0.33\textwidth}
        \includegraphics[width=\textwidth]{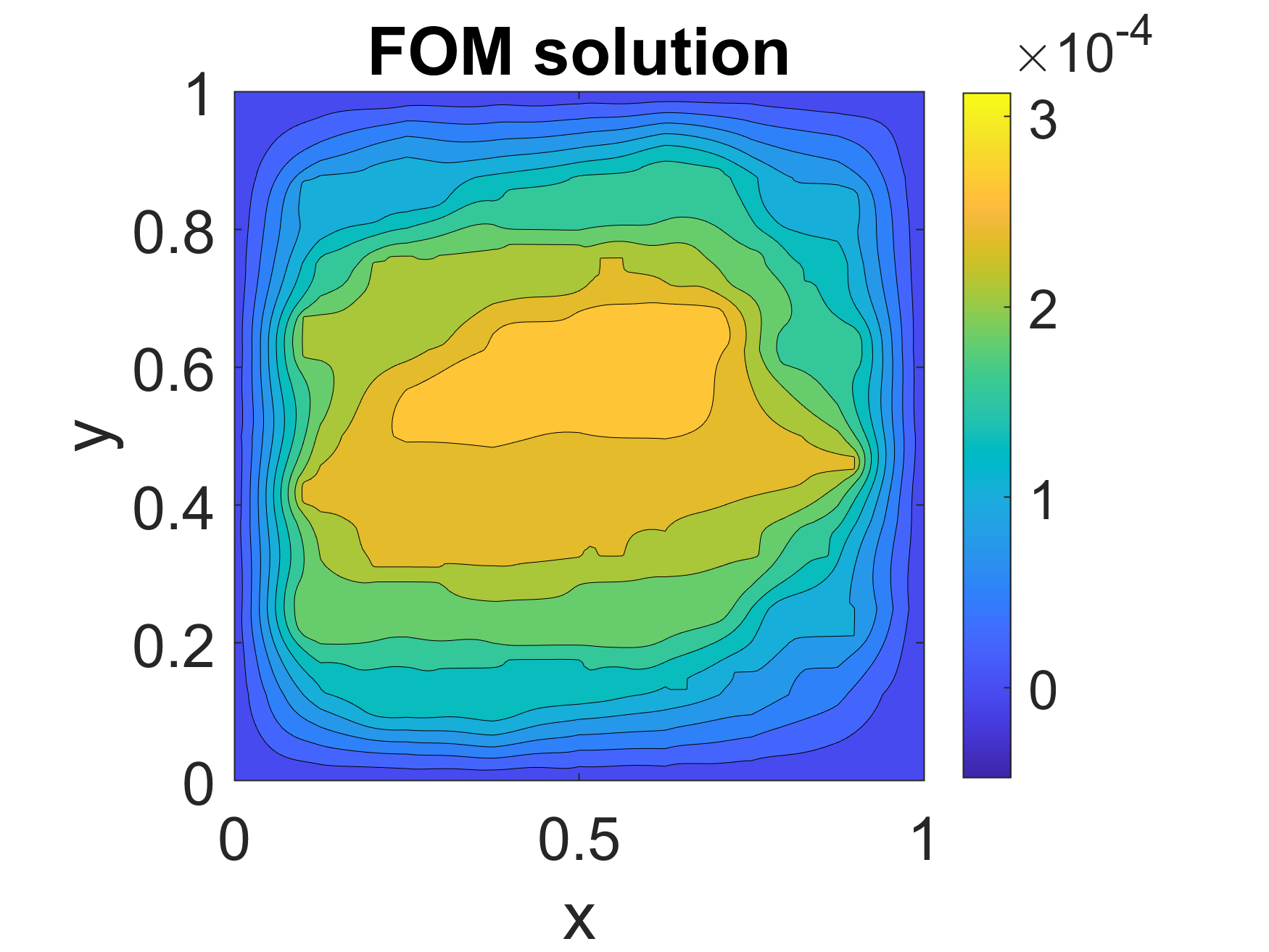}
    \end{subfigure}
    \hspace{-8pt}
    \begin{subfigure}[htb]{0.33\textwidth}
        \includegraphics[width=\textwidth]{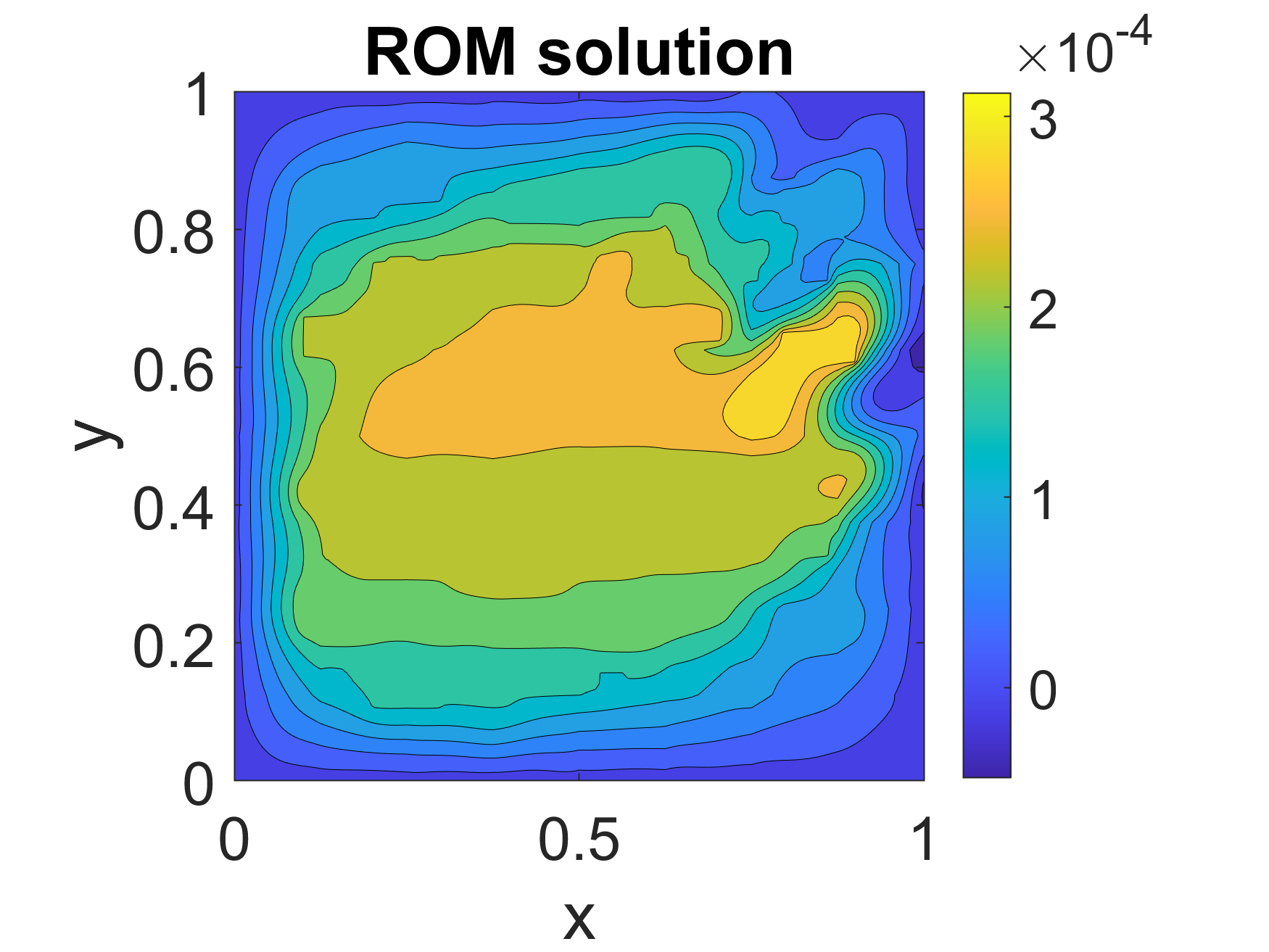}
    \end{subfigure}
    \hspace{-8pt}
    \begin{subfigure}[htb]{0.33\textwidth}
        \includegraphics[width=\textwidth]{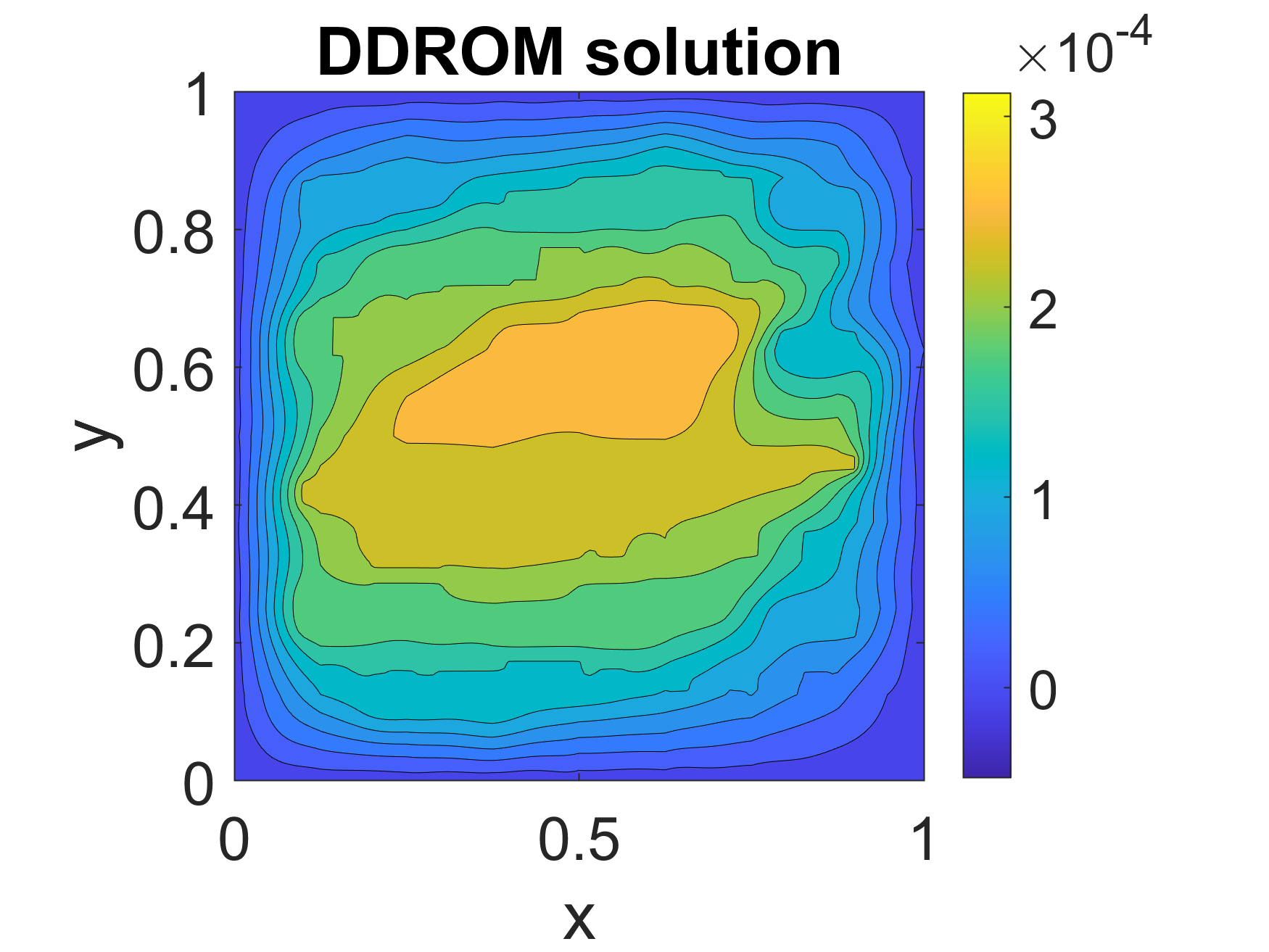}
    \end{subfigure}
    \captionsetup{justification=centering, font=small, skip=6pt}
    \caption{Images of solution $U$ at $\overline{\mu}$.}
    \label{fig_E2_u_128_8}
\end{figure}

\begin{figure}[H]
    \centering
    \begin{subfigure}[htb]{0.33\textwidth}
    \includegraphics[width=\textwidth]{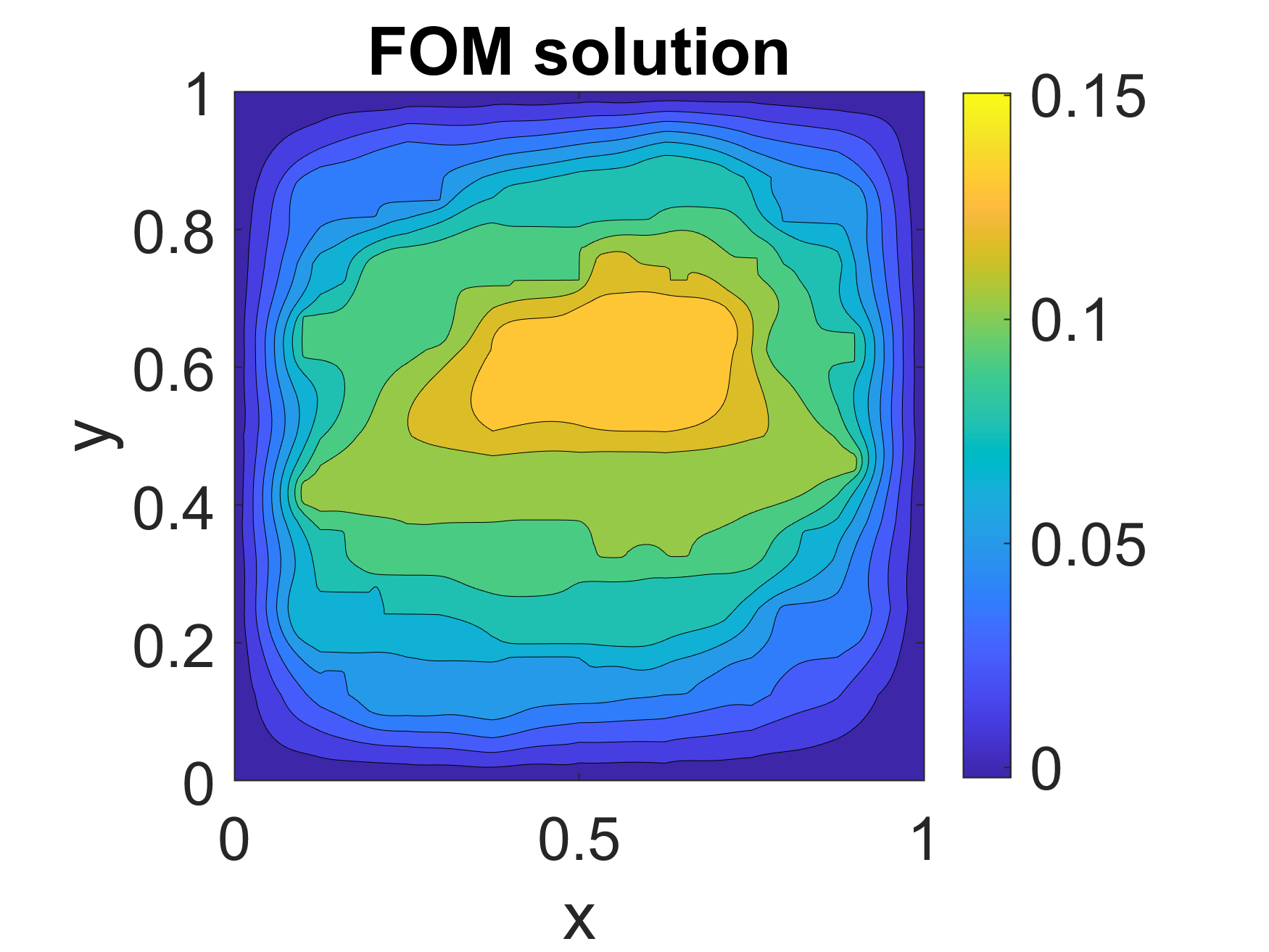}
    \end{subfigure}
    \hspace{-8pt}
    \begin{subfigure}[htb]{0.33\textwidth}
    \includegraphics[width=\textwidth]{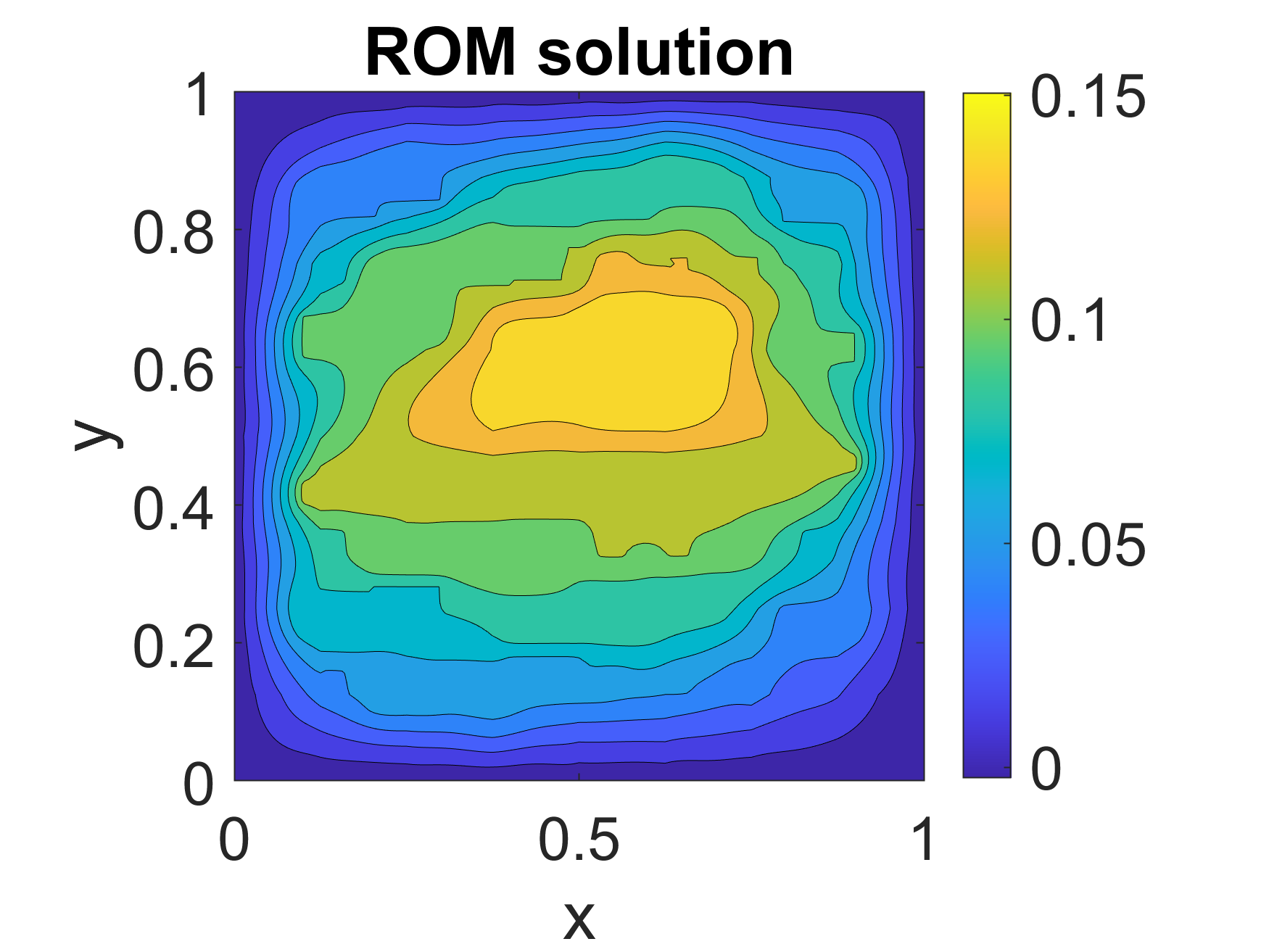}
    \end{subfigure}
    \hspace{-8pt}
    \begin{subfigure}[htb]{0.33\textwidth}
    \includegraphics[width=\textwidth]{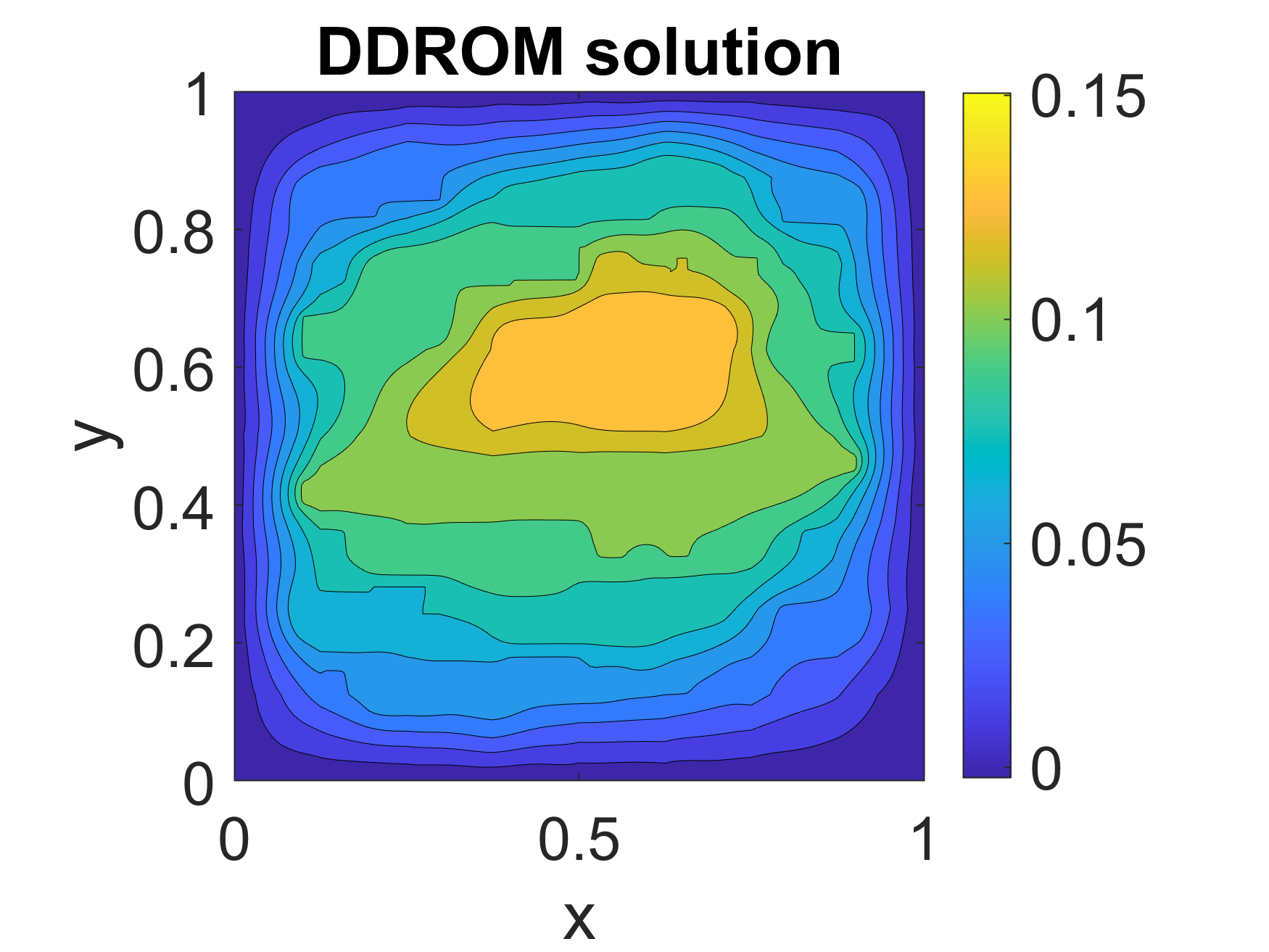}
    \end{subfigure}
    \captionsetup{justification=centering, font=small, skip=6pt}
    \caption{Images of solution $F$ at $\overline{\mu}$.}
    \label{fig_E2_f_128_8}
\end{figure}

As shown in Figure \ref{fig_E2_u_128_8}, the reference solution of the state variable $U$ exhibits a pronounced asymmetric spatial distribution due to the dominant advection effect. The initial ROM solution captures the overall structure reasonably well, but shows minor deviations in regions with sharp gradients. In contrast, the DDROM solution better matches the reference solution in these critical areas, demonstrating that the optimization improves local approximation while preserving the global structure. Similar observations are made for the control variable $F$ in Figure \ref{fig_E2_f_128_8}, where the DDROM solution achieves higher consistency than the initial ROM. This confirms that the $\mathcal{L}_2$-Opt-PSF method is effective for both state and control variables.

\begin{figure}[H]
    \centering   
    \includegraphics[width=1\linewidth]{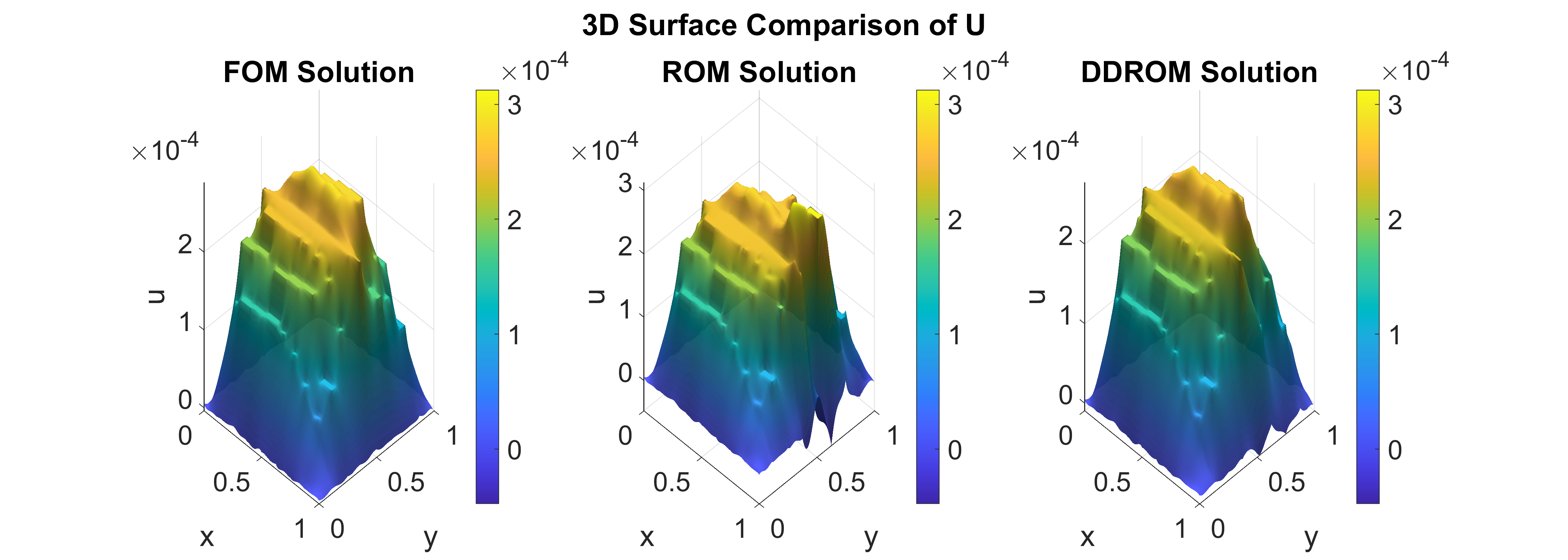}
    \caption{Three-dimensional surface comparison of $U$.}
    \label{fig_E2_U}
\end{figure}

\begin{figure}[H]
    \centering   
    \includegraphics[width=1\linewidth]{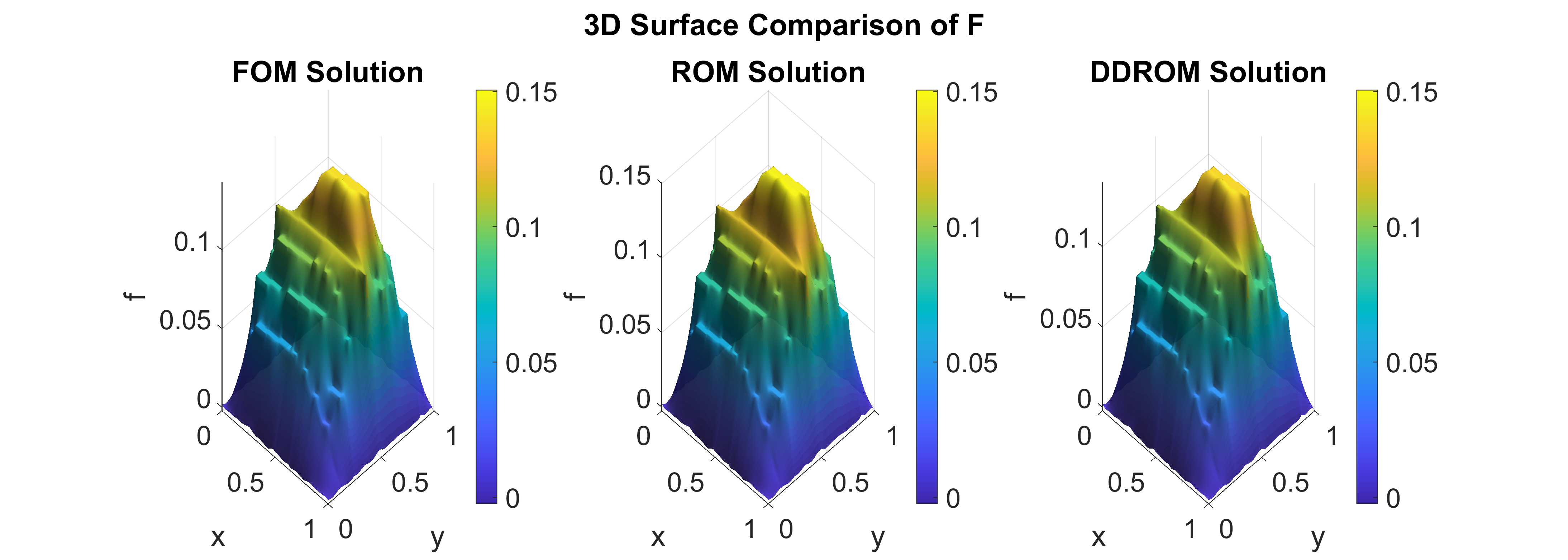}
    \caption{Three-dimensional surface comparison of $F$.}
    \label{fig_E2_F}
\end{figure}

A closer look at the three-dimensional surface plots reveals that the reference solution exhibits a pronounced peak structure and directional variation, reflecting the complex solution behavior under coupled advection-diffusion effects. The initial ROM solution captures the overall trend reasonably well, but still shows some discrepancies in peak height and local fluctuations. In contrast, the DDROM solution achieves better agreement in the peak and steep-gradient regions, with a surface shape that more closely matches the reference solution. This indicates that the optimization process effectively reduces local errors and improves the resolution of fine-scale features.

It should be noted that, while the overall visual differences between the DDROM and the initial ROM are not dramatic, this observation is reasonable. On one hand, the initial ROM already achieves high accuracy, providing a good starting point for the subsequent optimization. On the other hand, the optimization objective of the $\mathcal{L}_2$-Opt-PSF method is the $\mathcal{L}_2$ error of the output quantity, so the improvement is primarily reflected at the output level rather than in the visual appearance of the solution fields. This is consistent with the numerical results presented in Section \ref{sec_E1_GMsFEM}.

To evaluate the practical computational efficiency of the reduced-order models, we compare the CPU of the FOM, ROM, and DDROM. All tests are conducted under the same hardware and software environment. The offline time is reported as the total seconds required for construction. The average online time is computed over 10000 test parameter points. Based on these, we further report the total time for 10000 consecutive queries and the corresponding average total time per query. The results are summarized in Table \ref{tab_E2_cpu}.

\begin{table}[htbp]
    \centering
    \caption{The CPU time for different models.}
    \label{tab_E2_cpu}
    \begin{tabular}{l p{2.5cm} p{2.5cm} p{2.5cm}}
        \toprule
        CPU(s) &FOM  &ROM  &DDROM \\
        \midrule
        Offline time       & 2.686E+00 & 2.686E+00 & 2.694E+00 \\
        Average online time & 6.384E-02 & 1.778E-05 & 1.463E-05 \\
        Total time       & 6.411E+02 & 2.864E+00 & 2.841E+00 \\
        Average total time & 6.411E-02 & 2.864E-04 & 2.841E-04 \\
        \bottomrule
    \end{tabular}
\end{table}

Based on Table~\ref{tab_E2_cpu}, the offline times for all three models are comparable, as the dominant cost is shared. In terms of online efficiency, the DDROM achieves an average online time of $1.463\times10^{-5}$ seconds, which is 4360 times faster than the FOM and 1.22 times faster than the ROM. For 10000 consecutive queries, the DDROM reduces the total computational cost by 226 times compared to the FOM. These results demonstrate that the proposed method is highly advantageous for many-query applications such as real-time control and optimization under uncertainty.

In this subsection, the $\mathcal{L}_2$-Opt-PSF method is applied to the optimal control problem constrained by the stochastic advection-diffusion equation. Through convergence analysis, the optimal reduced dimension is determined to be $r=4$, under which the DDROM achieves an output error of $e_y = 8.00\times10^{-4}$. The error plots, contour plots and surface plots demonstrate that the DDROM solution is highly consistent with the reference solution.  For many-query scenarios, the DDROM substantially reduces the total computational cost compared to the FOM, highlighting its practical value for real-time control and large-scale parameter evaluation.

\section{Conlusion}
In this paper, we develop a non-intrusive, data-driven model reduction framework, termed $\mathcal{L}_2$-Opt-PSF, for solving multiscale stochastic optimal control problems governed by parameterized PDEs. Our approach leverages gradient optimization techniques combined with GMsFEM, providing an effective solution strategy for such optimal control problems. The method innovatively employs a parameter-separable form to handle dependence on stochastic parameters and directly minimizes the $\mathcal{L}_2$ norm of output errors via gradient optimization. Due to the multiscale nature of the original model, relying on the full model for gradient optimization would lead to a significant computational burden and high memory requirements. To address this, GMsFEM is employed solely as an offline solver to generate high-fidelity realizations of the optimal control for sampled random parameters. This significantly enhances computational efficiency for solving stochastic optimal control problems. Several numerical examples were carefully implemented for different stochastic optimal control problems, with the stochastic diffusion equation and the stochastic advection-diffusion equation serving as constraints. The numerical results demonstrate the efficacy of the proposed model reduction method and its promising applicability to stochastic optimal control problems governed by complex models.

\bibliographystyle{unsrt} 
\bibliography{references}

\end{document}